\numberwithin{equation}{section}
\newcommand{\hs}[1]{\hskip -#1pt}
\newcommand{\R}{\mathbb{R}}
\newcommand{\1}{ 1 \hspace{-2.4pt} \mathrm{l}}
\newcommand{\N}{\mathbb{N}}
\newcommand{\eps}{\varepsilon}
\newcommand{\loc}{\mathrm{loc}}
\newcommand{\A}{\mathscr{A}}
\newcommand{\G}{\mathscr{G}}
\newcommand{\D}{\mathscr{D}}
\newcommand{\cL}{\mathscr{L}}
\newcommand{\tr}{\mathrm{Tr}}
\newcommand{\intrn}{\int_{\mathbb R^{1+N}}}
\renewcommand{\=}{&\hs{5}=\hs{5}&}
\newcommand{\+}{&\hs{5}&+}
\newcommand{\nno}{\nonumber\\}
\newcommand{\Div}{\operatorname{div}}
\newtheorem{thm}{Theorem}[section]
\newtheorem{prop}[thm]{Proposition}
\newtheorem{cor}[thm]{Corollary}
\newtheorem{lem}[thm]{Lemma}
\theoremstyle{definition}
\newtheorem{hyp}[thm]{Hypothesis}
\theoremstyle{remark}
\newtheorem{rem}[thm]{Remark}
\begin{document}

\title[$L^p$-regularity for parabolic operators]{
$L^p$--regularity for parabolic operators with unbounded
time--dependent coefficients}

\author[M. Geissert]{Matthias Geissert}
\address{M. Geissert,
Technische Universit\"at Darmstadt, Fachbereich Mathematik,
Schlossgartenstr.~7, 64289 Darmstadt, Germany.}
\email{geissert@mathematik.tu-darmstadt.de}
\author[L. Lorenzi]{Luca Lorenzi}
\address{L. Lorenzi, Dipartimento di Matematica, Universit\`a degli Studi
di Parma, Viale G.P. Usberti, 53/A, I-43100 Parma, Italy.}
\thanks{L.L. wishes to thank the Department of Mathematics of the University of
Karlsruhe, where this paper originated, for the kind hospitality during his
visit.}
\email{luca.lorenzi@unipr.it}
\author[R. Schnaubelt]{Roland Schnaubelt}
\address{R. Schnaubelt, Universit\"at Karlsruhe, Fakult\"at f\"ur Mathematik,
 76128 Karlsruhe, Germany.}
\email{schnaubelt@math.uni-karlsruhe.de }


\keywords{Nonautonomous Ornstein-Uhlenbeck operators, Kolmogorov equations,
invariant measures, evolution operators, evolution semigroups}

\begin{abstract}
We establish the maximal regularity for nonautonomous Ornstein-Uhlenbeck
operators
in $L^p$-spaces with respect to a family
of invariant measures, where $p\in (1,+\infty)$. This result
follows from the maximal $L^p$-regularity for a class of elliptic operators
 with unbounded, time-dependent drift coefficients and potentials
acting on $L^p(\R^N)$ with Lebesgue measure.
\end{abstract}

\maketitle

\section{Introduction}
In recent years  parabolic problems with unbounded time-independent coefficients
have been investigated intensively. This line of research has focused
on the qualitative behavior, namely on the regularity of solutions
and the properties of invariant measures. (See e.g.\ \cite{lm, BKR06, cerrai,
rockner,stannat99} and the references therein.)
Such parabolic problems arise as Kolmogorov
equations for  ordinary stochastic differential equations. In this context,
however, it is natural to consider time-varying coefficients. Recently a
corresponding analytical theory for nonautonomous Kolmogorov equations
was initiated in \cite{dpl} (see also \cite{daprato}). There and in the papers
\cite{gl1,gl2} the prototypical case of the  nonautonomous
Ornstein-Uhlenbeck operator
\begin{align*}
({\mathscr A}_O(s)\varphi)(x)
=  \frac{1}{2}\sum_{i,j=1}^N  q_{ij}(s)D_{ij}\varphi (x)
   - \sum_{i,j=1}^N b_{ij}(s)x_jD_i\varphi(x), \qquad x\in\R^N,
\end{align*}
was studied, assuming that the coefficients $q_{ij}$ and $b_{ij}$
are bounded and continuous in $s\in\R$ and that the matrix
$[q_{ij}]$ is symmetric and uniformly positive definite.
In this case an explicit
formula for the solution of the parabolic equation
\begin{equation} \label{pb-3}
\left\{
\begin{array}{ll}
D_su(s,x)={\mathscr A}_O(s)u(s,x), & s\ge r, \ x\in\R^N,\\[2mm]
u(r,x)=\varphi (x), & x\in\R^N,
\end{array}
\right.
\end{equation}
when $\varphi\in C_b(\R^N)$ is known. This formula is very useful in many respects
(e.g.,\ to study regularity), see
\cite{dpl,gl1,gl2}; but it will play no role in our
investigations. The solutions of \eqref{pb-3} define evolution operators
(or, an evolution family)
on $C_b(\R^N)$ by setting $G_O(s,r)\varphi:=u(s)$. Recently, the
results from \cite{dpl,gl1,gl2} have partly been
extended to more  general elliptic operators with  time-varying unbounded
coefficients, see \cite{KLL,LZ}.

Under suitable assumptions, autonomous Kolmogorov operators admit an
invariant measure.
As the results in \cite{gl1} show, this is not true anymore in the nonautonomous case,
which is in fact the crucial novelty in the case of time-varying coefficients.
However, in \cite{gl1} it has been proved that it is possible to obtain
a \emph{family of invariant measures}
$\{\nu_s, \ s\in\R\}$ (also called \emph{evolution system of invariant
measures} in \cite{daprato} and \emph{entrance laws at $-\infty$} in
\cite{dynkin}), provided the matrices $-[b_{ij}(s)]$
generate an exponentially stable evolution family on $\R^N$.
These measures  are Borel probability measures on $\R^N$
satisfying the equation
\begin{align}\label{inv-def}
  \int_{\R^N} G_O(s,r)\varphi\,d\nu_s =\int_{\R^N}\varphi\,d\nu_r
\end{align}
for all $\varphi\in C_b(\R^N)$ and all $r,s\in\R$ with $r\le s$.
The set of all such families of invariant measures
has been described in \cite{gl1},
and it was shown that there exists exactly one family $\{\mu_s, \ s\in\R\}$
of Gaussian type which has finite moments of every order. In formula
\eqref{mu} we recall the explicit formula for $\mu_s$.
The existence of families of invariant measures for more general nonautonomous
operators has recently been proved in \cite{KLL}, see also \cite{BDPR08,BRS06,BRS08}
for related results.

The defining property \eqref{inv-def} of invariant measures easily implies
that one can extend the evolution operator associated with \eqref{pb-3} to a contraction
$G_O(s,r):L^p(\R^N,\mu_r) \to L^p(\R^N,\mu_s)$ for
all $s\ge r$. As in the autonomous
case one can expect good regularity properties of this extension. But
in the nonautonomous case one has to pay the price that the evolution operators
act on a family of spaces. In addition, it is well known that the asymptotic
behavior of nonautonomous problems is much more difficult to treat than in the
autonomous case. For an evolution family on a fixed Banach space an associated
 `evolution semigroup' was introduced for the study of evolution families.
 For instance, this semigroup allows to derive spectral theoretic
characterizations of certain asymptotic properties of the evolution family,
 see \cite{ChiLa}, \cite{Sch-survey}. It was observed by Da Prato and Lunardi in \cite{dpl}
that  one can generalize this construction also to the case of $L^p$-spaces
with time-varying measures, and the authors used  the evolution semigroup
in the study of longterm behavior of $G_O$,
see also \cite{gl1,gl2,KLL}.

Following these papers, we define a measure
$\nu$ on Borel sets on $\R^{1+N}$ by setting
\begin{align}\label{nu}
\nu(J\times B)=\int_J\mu_s(B)\,ds
\end{align}
for Borel sets $B\subset \R^N$ and $J\subset \R$. Of course, $\nu$ is
not a probability measure anymore. One further introduces the
\emph{evolution semigroup} $T(\cdot)$ on $L^p(\R^{1+N},\nu)$  corresponding to
$G_O$ defined by
\begin{equation}
(T(t)f)(s,x)=(G_O(s,s-t)f(s-t,\cdot))(x),\qquad\;\,(s,x)\in\R^{1+N},\;\,t\ge 0,
\label{sem-T(t)}
\end{equation}
where $f\in L^p(\R^{1+N},\nu)$ and $1\le p<+\infty$. It is straightforward to
check that equation \eqref{sem-T(t)} defines in fact a $C_0$-semigroup on
$L^p(\R^{1+N},\nu)$ and that
\begin{equation*}
\int_{\R^{1+N}}T(t)f\,d\nu=\int_{\R^{1+N}}f\, d\nu,
    \qquad t>0,\;\,f\in C_c(\R^{1+N}),
\end{equation*}
see \cite{dpl} or \cite{gl1}.
We denote the generator of $T(\cdot)$ in $L^p(\R^{1+N},\nu)$ by $G_p$,
where $1\le p<+\infty$. In \cite{LZ} it has been proved that $G_p$
is the closure  the parabolic operator ${\mathscr G}$ defined by
\begin{align*}
({\mathscr G}u)(s,x)
   =({\mathscr A}_O(s)u(s,\cdot))(x)-D_s u(s,x),\qquad\;\,(s,x)\in\R^{1+N},
\end{align*}
for  $u\in C^{\infty}_c(\R^{1+N})$. In this paper we want to show that
$G_p$ has the `natural' domain
\begin{align}\label{dom-g}
D(G_p)&=\{u\in L^p(\R^{1+N},\nu): D_t,D_i u, D_{ij}u\in L^p(\R^{1+N},\nu),
\ \forall\; i,j=1,\ldots,N\}\notag\\
&=:W^{1,2}_p(\R^{1+N},\nu),
\end{align}
for  $1< p<+\infty$,
see Theorem~\ref{ou-thm}. This means that for each inhomogeneity
$f\in L^p(\R^{1+N},\nu)$ and each $\lambda>0$, the function $u=(\lambda-G_p)^{-1}f$
is the only solution in $W^{1,2}_p(\R^{1+N},\nu)$ of the parabolic equation
 \begin{align}\label{ou-line}
 D_s u(s)= (\A_O(s)-\lambda)u(s) + f(s), \qquad s\in\R,
 \end{align}
 on the line.
In other words, the problem \eqref{ou-line} possesses maximal $L^p$-regularity
with respect to the measure $\nu$. Such results are known in the
autonomous case even in much greater generality, see e.g.\
\cite{CG01,DV02,Lu97,MN,MPRS0,MPRS} and the references therein, where a variety
of methods was developed. In the case $p=2$, the identity \eqref{dom-g} was
shown in \cite{gl1} for the nonautonomous case using regularity
properties of $G_O(s,r)$ and tools from interpolation theory. However,  the necessary
results from interpolation theory do not hold for $p\neq 2$.

In this paper
we establish \eqref{dom-g} for all $p\in (1,+\infty)$ using a completely
different method, inspired by \cite{DV02} and \cite{MPRS}. We transform the
operator $\G$ into an operator $\cL_O$ on
the space $L^p(\R^{1+N})$ with Lebesgue measure which has a dominating
potential, see Section~\ref{sect-2}. The operator $\cL_O$ is a (simple)
special case of a class of parabolic operators  $\cL=\A(\cdot)-D_s$
on $L^p(\R^{1+N})$ with time-varying coefficients, see \eqref{scrA}.
The uniformly elliptic
operators $\A(s)$ may have unbounded potential and
drift coefficients. We require
that the potential  satisfies an oscillation condition and that it dominates
the drift coefficients, as described in Section~\ref{sect-3}.
In Theorem~\ref{Lp-thm}
it is shown that the realization $L_p$ of $\cL$ in $L^p(\R^{1+N})$,
where  $1<p<+\infty$, with the domain
$D(L_p)=W^{1,2}_p(\R^{1+N})\cap D(V)=:\D_p$  generates a positive and
contractive evolution semigroup $S(\cdot)$. Hence the parabolic equation
\begin{align}\label{parab0}
 D_s u(s)= (\A(s)-\lambda)u(s) + f(s), \qquad s\in\R,
 \end{align}
has the unique solution $u=(\lambda-L_p)^{-1}f$ in $\D_p$,
 for every $f\in L^p(\R^{1+N})$ and $\lambda>0$;
i.e., \eqref{parab0} has maximal $L^p$-regularity.
 Moreover, the evolution family associated with $S(\cdot)$
solves the initial value problem corresponding to \eqref{parab0}. In
Section~\ref{sect-4} we extend these results to the spaces $L^1(\R^{1+N})$
and $C_0(\R^{1+N})$.

By means of Theorem~\ref{Lp-thm} one could also treat generalized
Ornstein-Uhlenbeck operators as in \cite{DV02,MPRS}. For simplicity, we
restrict ourselves to the basic and most prominent case of the classical
Ornstein-Uhlenbeck operators.

Our main theorems are based on two crucial estimates and on
semigroup theory. In
Proposition~\ref{interpol1} we show a weighted gradient estimate
which allows to control the gradient term by the heat operator and
the potential. Proposition~\ref{prop-dom-p} gives the main
\emph{a priori} estimate for the parabolic operator ${\mathscr L}$
which implies that its realization $L_p$ with domain $\D_p$ is closed in
$L^p(\R^{1+N})$. We then verify that $L_p$ is maximally dissipative
and employ the theory of evolution semigroups to establish
Theorem~\ref{Lp-thm}. The proofs for the spaces $L^1$ and $C_0$
in the fourth section are similar, and the one for $C_0$ uses the
$L^p$ result. Our approach
is inspired by the paper \cite{MPRS} which was devoted to the
autonomous case, but there are fundamental differences. So we cannot use  the
theory of analytic semigroups since the evolution semigroup $S(\cdot)$
is not analytic. (The spectrum of its generator contains
vertical lines, see \cite[Theorem~3.13]{ChiLa}.) Further, the known
results on parabolic evolution operators do not apply to the class
of elliptic operators $\A(s)$ studied here, see Remark~\ref{AT-rem}.
 Moreover, the presence of the
time derivative in $\cL$ leads to new difficulties in the proofs
of Propositions~\ref{interpol1} and \ref{prop-dom-p}.
For instance, we need a parabolic version of the Besicovitch covering theorem
established in the Appendix.

Besides \cite{MPRS} and the papers mentioned above, there are several
works treating $L^p$-regularity for autonomous problems
with unbounded coefficients in $L^p$-spaces with respect to the Lebesgue measure,
see e.g.\ \cite{CV87,CV88,FL,PRS} and the references therein.
We are only  aware of one related paper for nonautonomous problems
(except for \cite{gl2}): in \cite{CMS}
operators without drift terms were studied with completely different
methods and assumptions.

\subsection*{Notations}
We denote by $|\cdot|$ the Euclidean norm of vectors,
whereas $\|A\|$ is the operator  norm of a matrix with respect to
the Euclidean norm. The transpose of $A$ is $A^*$ and $\tr A$ is its trace.
Open balls in $\R^d$ are designated by $B(x,r)$.
We write $\langle\xi, \eta\rangle$ or $\xi\cdot\eta$ for the
 scalar product in $\R^d$ and $I$ for the identity map.
   $D_j$, $\nabla$, $D^2$ and $\Div$ are the (distributional)
 partial derivatives, gradient, Hessian matrix and divergence, respectively,
  with respect to the space variable $x\in \R^N$. We also use the notations
  $\nabla_x$, $D_x^2$ and $\Div_x$  if a function depends
 on both the time and space variables $(s,x)\in \R^{1+N}$. In this case $D_s$
 is the time derivative. We always denote  the spatial Laplace operator
 by $\Delta=D_1^2+\dots +D_N^2$.

In this paper we only consider real function spaces. The symbol $C^k$
refers to spaces of $k$-times continuously differentiable functions,
where $k\in\N\cup\{0,+\infty\}$. In such spaces
the subscript $c$ means  `with compact support', whereas the subscript
$b$ (resp. $0$)
means that the functions and the derivatives up to order $k$ are bounded
(resp. vanish at $\infty$).
The space of continuous functions $f:\R^{1+N}\to \R^d$ such that also
$\nabla_x f$ is continuous on $\R^{1+N}$ is denoted by
$C^{0,1}(\R^{1+N},\R^d)$.
Let $\mu$ be a $\sigma$-finite measure on $\R^{1+N}.$
Then, $W^{1,2}_p(\R^{1+N},\mu)$ is
the space of functions $f:\R^{1+N}\to\R$ such that $f$ and
the distributional derivatives $D_sf$,
$D_jf$ and $D_{ij}f$ ($i,j=1,\ldots,N$) belong to $L^p(\R^{1+N},\mu)$.
We endow it with the natural norm
\begin{eqnarray*}
\|f\|_{W^{1,2}_p(\R^{1+N},\mu)}^p\=\int_{\R^{1+N}}|f|^pd\mu+
\intrn |D_sf|^pd\mu\nno
\+\sum_{j=1}^N\int_{\R^{1+N}}|D_jf|^pd\mu+
\sum_{i,j=1}^N\int_{\R^{1+N}}|D_{ij}f|^pd\mu.
\end{eqnarray*}
We use analogous definitions for subsets of the form $(a,b)\times \R^N$.
If $\mu$ is the Lebesgue measure, we omit $\mu$ in the notation.
The usual isotropic Sobolev spaces on $\R^d$ are denoted by
$W^k_p(\R^d)$. The norm on $L^p(\R^d)$ is designated by $\|f\|_p$
for $1\le p\le +\infty$.
Finally, we write $c=c(\alpha,\ldots)$ for a constant depending only
on the quantities $\alpha,\ldots$ Such constants may vary from line to line.


\section{Transformation of the parabolic Ornstein--Uhlenbeck operator}
\label{sect-2}

For any continuous function $s\mapsto B(s)$ from $\R$ into the set of $N\times N$ matrices, we denote by $U(s,r)$
the solution of the problem
\begin{align*}
\left\{
\begin{array}{ll}
D_sU(s,r)=B(s)U(s,r), & s\in\R, \\[2mm]
U(r,r)=I,
\end{array}
\right.
\end{align*}
where $r\in\R$. We state our hypotheses on
the coefficients $Q(s) =[q_{ij}(s)]$ and $B(s) = [b_{ij}(s)]$ of the
Ornstein-Uhlenbeck operator $\A_O(s)$.
\begin{hyp}\label{hyp1}
\begin{itemize}
\item[(i)]
The coefficients $q_{ij}$ and $b_i$  belong, respectively, to  $C_b^1(\R)$ and
$C_b(\R)$ for all $i,j=1,\ldots,N$.
\item[(ii)] For every $s\in\R$, the matrix $Q(s)$ is symmetric and there
exists a constant $\eta_0>0$ such that
\begin{eqnarray*}
\langle Q(s)\xi , \xi \rangle \geq \eta_0 |\xi |^2, \qquad\;\,
\xi \in \R^N,\ s\in\R.
\end{eqnarray*}
\item[(iii)]
There exist constants $C_0,\omega>0$ such that
\begin{eqnarray*}
\|U(r,s)\|\le C_0e^{-\omega(s-r)},\qquad\;\,
s,r\in\R \text{ with } s\ge r.
\end{eqnarray*}
\end{itemize}
\end{hyp}

Under the above assumptions, there exists a family of invariant measures
for $\A_O(s)$ (see \eqref{inv-def}) of Gaussian type  given by
\begin{align} \label{mu}
\mu_s(dx)&= (2\pi)^{-\frac{N}{2}}(\det Q_s)^{- \frac12}
 \,e^{-\frac{1}{2}\langle Q_s^{-1}x,x\rangle},\qquad s\in\R,\ x\in\R^N,\\
Q_s &=\int_s^{+\infty}U(s,\xi)Q(\xi)U^*(s,\xi)d\xi,\qquad s\in\R, \label{qs}
\end{align}
see \cite{dpl} and \cite{gl1}.
Actually, the authors of the previous papers deal with backward nonautonomous parabolic problems,
whereas we have preferred to consider forward problems in the present paper.
But a straightforward change of variables allows to transform
the problem \eqref{pb-3} into
a backward Cauchy problem. More precisely, for any $r\in\R$,
the function $(s,x)\mapsto v(s,x):=(G_O(-s,-r)\varphi)(x)$
is a classical solution to the backward Cauchy problem
\begin{equation}\label{pb-3-bis}
\left\{
\begin{array}{ll}
D_sv(s,x)+\hat {\mathscr A}_O(s)v(s,x)=0, & s\le r, \ x\in\R^N,\\[2mm]
v(r,x)=\varphi(x), & x\in\R^N,
\end{array}
\right.
\end{equation}
where
\begin{eqnarray*}
\hat{\mathscr A}_O(s)\varphi &\hs{5}=\hs{5}& \frac{1}{2}\sum_{i,j=1}^N
q_{ij}(-s)D_{ij}\varphi - \sum_{i=1}^N b_{ij}(-s)x_jD_i\varphi.
\end{eqnarray*}
Hence, the evolution operator  $G_O(s,r)$ associated with problem
\eqref{pb-3} and the evolution operator  $P(s,r)$ associated with
problem \eqref{pb-3-bis} are related by the formula
\begin{equation*}
G_O(s,r)\varphi=P(-s,-r)\varphi,\qquad\;\,r\le s,\;\,\varphi\in C_b(\R^N).
\end{equation*}

In the first lemma we collect some estimates concerning the
densities of the invariant measures.
\begin{lem} \label{prop-1}
Assume that Hypothesis $\ref{hyp1}$ is satisfied.
Then, there exist two constants $C_1,C_2>0$ such that the inequalities
\begin{align}
C_1|x|^2 &\le \langle Q_rx,x\rangle\le C_2|x|^2,   \label{estim-Qt-0}\\
C_2^{-1}\,|x|&\le | Q_r^{-1}x| \le C_1^{-1}\,|x|,   \label{estim-Qt-2} \\
  C_1^N    & \le \det Q_r\le  C_2^N,  \label{estim-Qt-1}
\end{align}
hold for all $r\in\R$ and $x\in\R^N$.
\end{lem}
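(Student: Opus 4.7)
The plan is to reduce all three estimates to two-sided bounds on the quadratic form $\langle Q_r x, x\rangle$, and then deduce \eqref{estim-Qt-2} and \eqref{estim-Qt-1} by elementary spectral considerations for the symmetric positive-definite matrix $Q_r$.

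For the upper bound in \eqref{estim-Qt-0}, I would rewrite
\[
\langle Q_r x, x\rangle=\int_r^{+\infty}\langle Q(\xi)U^{*}(r,\xi)x,U^{*}(r,\xi)x\rangle\,d\xi
\]
and estimate the integrand by $\|Q(\xi)\|\,|U^{*}(r,\xi)x|^{2}$. By Hypothesis~\ref{hyp1}(i) the norms $\|Q(\xi)\|$ are uniformly bounded by some $M$, and by Hypothesis~\ref{hyp1}(iii) we have $\|U^{*}(r,\xi)\|=\|U(r,\xi)\|\le C_0 e^{-\omega(\xi-r)}$ for $\xi\ge r$. Integrating the Gaussian-like decay in $\xi$ gives $\langle Q_r x, x\rangle\le M C_0^{2}(2\omega)^{-1}|x|^{2}$, hence the value of $C_2$.

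The lower bound is more delicate because the exponential decay now works against us. My plan is to truncate the integral: for any $T>0$,
\[
\langle Q_r x, x\rangle\ge\eta_0\int_r^{r+T}|U^{*}(r,\xi)x|^{2}\,d\xi
\]
by Hypothesis~\ref{hyp1}(ii). From $U^{*}(r,\xi)^{-1}=U(\xi,r)^{*}$ and the Gronwall-type estimate $\|U(\xi,r)\|\le e^{M_B(\xi-r)}$ with $M_B=\|B\|_\infty$ (valid since the $b_{ij}$ are bounded), I obtain $|U^{*}(r,\xi)x|\ge e^{-M_B(\xi-r)}|x|$. Integrating the resulting exponential over $[r,r+T]$ (choosing $T=1$, say) produces a strictly positive constant $C_1$ independent of $r$, establishing the left inequality in \eqref{estim-Qt-0}.

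Since $Q_r$ is symmetric, the two-sided bound $C_1 |x|^{2}\le\langle Q_r x, x\rangle\le C_2|x|^{2}$ means that all eigenvalues $\lambda_1,\dots,\lambda_N$ of $Q_r$ lie in $[C_1,C_2]$. Consequently, the eigenvalues of $Q_r^{-1}$ lie in $[C_2^{-1},C_1^{-1}]$, which gives \eqref{estim-Qt-2}, and the identity $\det Q_r=\prod_i\lambda_i$ immediately yields \eqref{estim-Qt-1}. The only non-mechanical point in the whole argument is the lower bound in Step~2: one has to realize that integrating over all of $[r,+\infty)$ is wasteful and would in fact fail for general bounded $B$, whereas any fixed compact interval suffices because the coefficients are uniformly bounded in time.
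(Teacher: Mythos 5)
Your proof is correct, and in substance it follows the same route as the paper for the upper bound in \eqref{estim-Qt-0}, and a mild variant for the rest. Two small points of comparison are worth recording. For the lower bound, the paper does \emph{not} truncate: it uses the backward estimate $\|U(\xi,r)\|\le M_0 e^{\varpi(\xi-r)}$ (your Gronwall bound with $M_0=1$, $\varpi=\|B\|_\infty$) to get $|U^*(r,\xi)x|\ge M_0^{-1}e^{-\varpi(\xi-r)}|x|$, and then integrates this exponential lower bound over the whole half-line $[r,+\infty)$, which is a convergent integral with a strictly positive value $\frac{\eta_0}{2M_0^2\varpi}|x|^2$. Your final remark that this ``would in fact fail for general bounded $B$'' is therefore mistaken: the exponential decay in the lower bound poses no obstruction since the integral $\int_r^{+\infty}e^{-2\varpi(\xi-r)}d\xi$ is finite and positive, which is exactly what one needs. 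Your truncation to $[r,r+1]$ is an equally valid alternative, but it gains nothing and is not forced on you. For \eqref{estim-Qt-2}, you argue via the eigenvalues of the symmetric matrix $Q_r$, whereas the paper factors $I=Q_r^{1/2}Q_r^{1/2}Q_r^{-1}$ and bounds $\|Q_r^{\pm 1/2}\|$; the spectral argument is a shade more direct and both are completely standard. For \eqref{estim-Qt-1} both proofs use the eigenvalue localization.
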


\begin{proof}
Let  $x\in\R^N$ and $r\in\R$. Formula \eqref{qs} and
Hypothesis~\ref{hyp1} yield that
\begin{align} \label{qsxx}
\langle Q_rx,x\rangle
&=\int_r^{+\infty}\langle Q(\xi)U^*(r,\xi)x, U(r,\xi)^*x\rangle \, d\xi\\
&\le  C_0^2\|Q\|_{\infty}|x|^2\int_r^{+\infty}e^{-2\omega(\xi-r)}d\xi
    =\frac{C_0^2}{2\omega}\|Q\|_{\infty}\,|x|^2,\notag
\end{align}
for any $x\in\R^N$,
which accomplishes the proof of the second inequality in \eqref{estim-Qt-0}
with $C_2=\frac{C_0^2}{2\omega}\|Q\|_{\infty}$. We further recall that
$U(r,s)^{-1}=U(s,r)$  for all $r,s\in\R$ and that
$\|U(r,s)\|\le M_0 e^{\varpi(r-s)}$ for constants $\varpi\in\R_+$
and $M_0\ge1$ and all $r\ge s$.
It thus holds
\[  |x|=|U^*(\xi,r)U^*(r,\xi)x|\le \| U(\xi,r)\| \ |U^*(r,\xi)x|
\le M_0e^{\varpi (\xi-r)}\, |U^*(r,\xi)x|,  \]
for all $r,\xi\in\R$ with $r\le \xi$ and all $x\in\R^N$. Using \eqref{qsxx}
and Hypothesis~\ref{hyp1}(ii), we then deduce
\[
\langle Q_rx,x\rangle\ge\eta_0\int_r^{+\infty}|U^*(r,\xi)x|^2\, d\xi
\ge \frac{\eta_0|x|^2}{M_0^2}\int_r^{+\infty}e^{-2\varpi(\xi-r)}\,d\xi
=\frac{\eta_0}{2M_0^2\varpi}|x|^2,
\]
which gives the first estimate in \eqref{estim-Qt-0} with
$C_1=\eta_0(2M_0^2\varpi)^{-1}$. The assertion \eqref{estim-Qt-0}
is equivalent to
\begin{equation}
\sqrt{C_1}\,|x|\le |Q^{1/2}_rx|\le\sqrt{C_2}\, |x|.  \label{estim-Qt-3}
\end{equation}
The first inequality in \eqref{estim-Qt-2} now follows noting that
\begin{equation*}
|x|= |Q_r^{1/2} Q_r^{1/2} Q^{-1}_rx| \le
\| Q_r^{1/2}\|^2 \; |Q^{-1}_rx|\le C_2\, |Q^{-1}_rx|.  
\end{equation*}
On the other hand, \eqref{estim-Qt-3} implies $\|Q_r^{-1/2}\|\le C_1^{-1/2}$
 and, hence, the second part of \eqref{estim-Qt-2}.
The final assertion \eqref{estim-Qt-1} is a consequence of the fact that
the eigenvalues of $Q_r$ belong to the interval
$[C_1,C_2]$  due to \eqref{estim-Qt-0}.
\end{proof}

Let $p\in (1,+\infty)$.  We now transform the differential operator
$\G ={\mathscr A}_O(\cdot)-D_s$ acting on $L^p(\R^{1+N},\nu)$
into a differential operator $\cL_O$ acting on $L^p(\R^{1+N})$.
To this purpose, we set $\Phi(s,x)=\frac12 \langle Q_s^{-1}x,x\rangle$
for $(s,x)\in \R^{1+N}$. Observe that \eqref{nu}, \eqref{mu} and
\eqref{estim-Qt-1} yield
\begin{align*}
&\int_{\R^{1+N}}\!|e^{\frac{1}{p}\Phi} f|^p\,d\nu
  =(2\pi)^{-\frac{N}{2}}\!\int_{\R^{1+N}}\!(\det Q_s)^{-\frac12} |f|^p\,ds\,dx
   \le (2\pi C_1)^{-\frac{N}{2}}\! \int_{\R^{1+N}}\! |f|^p ds\,dx, \\[2mm]
&\int_{\R^{1+N}}| e^{-\frac{1}{p}\Phi}g|^p\,ds\,dx
  = (2\pi)^{\frac N2} \int_{\R^{1+N}}(\det Q_s)^\frac12 |g|^p\,d\nu
 \le (2\pi C_2)^{\frac N2}\int_{\R^{1+N}} |g|^p \,d\nu,
\end{align*}
for every $f\in L^p(\R^{1+N})$ and  $g\in L^p(\R^{1+N},\nu)$.
Therefore the operator $M_p:L^p(\R^{1+N})\to L^p(\R^{1+N},\nu)$, defined by
\begin{equation}\label{Mp}
(M_pf)(s,x)=e^{\frac{1}{2p}\langle Q_s^{-1}x,x\rangle}f(s,x)
           =e^{\frac{1}{p}\Phi(s,x)}f(s,x),
\end{equation}
is an isomorphism with the inverse $M_p^{-1} g= e^{-\frac{1}{p}\Phi }g$.
On test functions we now define the differential operator
\begin{equation}\label{LO}
{\mathscr L}_O  :=M_p^{-1}({\mathscr A}_O(\cdot)-D_s)M_p.
\end{equation}
Let $u$ be smooth.
A straightforward computation shows that the equalities
\begin{equation} \label{change-unknown}
\begin{split}
D_sM_p u &= \frac{1}{p} (M_p u) D_s\Phi  +M_p(D_s u),\\
D_iM_p u &= \frac{1}{p}  e^{\frac{1}{p}\Phi} u D_i\Phi +e^{\frac{1}{p}\Phi}D_i u
          = \frac{1}{p} (M_p u) D_i\Phi  +M_p(D_i u),\\
D_{ij}M_p u &= \frac{1}{p} (M_pu) D_{ij}\Phi
       +\frac{1}{p^2}  (M_p u) (D_i\Phi) D_j\Phi
       +\frac{1}{p} (D_i\Phi) M_p(D_j u)\\
       &\qquad   +\frac{1}{p} (D_j\Phi) M_p(D_i u) +M_p(D_{ij} u)
\end{split}
\end{equation}
hold on $\R^{1+N}$ for all $i,j=1,\ldots,N$. For  $(s,x)\in\R^{1+N}$
 we thus obtain
\begin{align*}
({\mathscr L_O}u)(s,x)&= -D_s u(s,x)+\frac{1}{2}\tr(Q(s)D^2_x u(s,x))
       -\langle B(s)x,\nabla_x u(s,x)\rangle\\
&\qquad + \frac{1}{p}\langle Q(s)\nabla_x\Phi(s,x),\nabla_x u(s,x)\rangle
     +\frac{1}{2p}\tr(Q(s)D^2_x\Phi(s,x))u(s,x)\\
&\qquad +\frac{1}{2p^2}\langle Q(s)\nabla_x\Phi(s,x),\nabla_x\Phi(s,x)\rangle
 u(s,x) -\frac{1}{p}u(s,x)D_s\Phi(s,x)\\
&\qquad -\frac{1}{p}\langle B(s)x,\nabla_x\Phi(s,x)\rangle u(s,x)\\
&\hspace*{-1.1cm}=:-D_su(s,x) +\frac{1}{2}\tr \left(Q(s) D^2_xu (x)\right) +
   \langle F_O(s,x), \nabla_x u(s,x) \rangle -V_O(s,x)u(s,x).
\end{align*}
To write $F_O$ and $V_O$ more conveniently,  we observe that
\begin{equation}\label{D-Phi}
\nabla_x\Phi(s,x)=Q_s^{-1}x \qquad \text{and} \qquad D^2_x\Phi(s,x)=Q_s^{-1}.
\end{equation}
As a consequence,
 \begin{align}\label{def-FO}
 F_O(s,x)= \frac{1}{p} Q(s)Q_s^{-1}x -B(s)x.
\end{align}
 We further have
\begin{align}\label{Ds-Phi}
D_s\Phi(s,x)&= \frac{1}{2}\langle D_sQ_s^{-1}x,x\rangle=
       -\frac{1}{2}\langle Q_s^{-1}\,(D_sQ_s)\,Q_s^{-1}x,x\rangle \notag\\
&= -\frac{1}{2}\left\langle Q_s^{-1}\left (D_s\int_s^{+\infty}
     U(s,r)Q(r)U^*(s,r)\,dr\right )Q_s^{-1}x,x\right\rangle   \notag\\
&= -\frac{1}{2}\langle Q_s^{-1}\left (-Q(s)+B(s)Q_s+Q_sB^*(s)\right )
   Q_s^{-1}x,x\rangle\notag\\
&= \frac{1}{2}\left(\langle Q(s)Q_s^{-1}x,Q_s^{-1}x\rangle
    -\langle Q_s^{-1}B(s)x,x\rangle-\langle B^*(s)Q_s^{-1}x,x\rangle\right) \notag \\
&=  \frac{1}{2}\langle Q(s)Q_s^{-1}x,Q_s^{-1}x\rangle
    -\langle Q_s^{-1}B(s)x,x\rangle.
\end{align}
It follows that
\begin{align}\label{def-VO}
V_O(s,x)&=-\frac{1}{2p}\tr(Q(s)Q_s^{-1})-\frac{1}{2p^2}\langle Q(s)Q_s^{-1}x,Q_s^{-1}x\rangle
+\frac{1}{p}\langle B(s)x,Q_s^{-1}x\rangle \notag\\
&\qquad +\frac{1}{p}\left (\frac{1}{2}\langle Q(s)Q_s^{-1}x,Q_s^{-1}x\rangle
-\langle Q_s^{-1}B(s)x,x\rangle\right ) \notag\\
&= \frac{1}{2p}\left (1-\frac{1}{p}\right )\langle Q(s)Q_s^{-1}x,Q_s^{-1}x\rangle
-\frac{1}{2p}\tr (Q(s)Q_s^{-1}),
\end{align}
for all $(s,x)\in\R^{1+N}$. Now, let $p\in(1,+\infty)$.
Hypothesis~\ref{hyp1} and Lemma~\ref{prop-1} then imply that
 $V_O(s,x)\ge k_1\,|x|^2-k_0$
for constants $k_1=k_1(p)>0$ and $k_0\ge0$ and all $(s,x)\in\R^{1+N}$.
We fix the number  $c_0= 2\|\Div_x F_O\|_\infty$
(which is possible because of Hypothesis~\ref{hyp1},
Lemma~\ref{prop-1} and \eqref{def-FO}) and set
\[W_O(s,x)=c_0+k_1\,|x|^2,\]
 for all $(s,x)\in\R^{1+N}$.
In view of Lemma~\ref{prop-1} and formulas  \eqref{def-FO} and
\eqref{def-VO},  there exist constants
 $\lambda=k_0+c_0\ge0$, $c_1=c_1(p)\ge1$, $\kappa=\kappa(p)>0$,
 and $\theta=2/3$  with
\begin{equation}\label{VF-cond}
W_O\le \lambda + V_O \le c_1 W_O, \qquad |F_O|\le \kappa W_O^{1/2},
\qquad \theta W_O+ \Div_x F_O\ge 0
\end{equation}
 on $\R^{1+N}$.

\section{Operators with dominating potential for $1<p<+\infty$.}
\label{sect-3}

In this section we mainly consider elliptic operators of the form
\begin{align}\label{scrA}
\A(s)\varphi=\Div_x(a(s)\nabla_x \varphi)+F(s)\cdot \nabla_x \varphi
      -V(s)\varphi,
\end{align}
at first defined for  $\varphi \in C^\infty_c(\R^N)$, and their
parabolic counterpart
\[ \cL u =(\A(\cdot)-D_s) u,\]
at first defined for $u\in C^\infty_c(\R^{1+N})$.  We assume the
following conditions on the coefficients $a=[a_{ij}]$, $F$ and $V$.
\begin{enumerate}
\item[{\bf (A1)}] $a_{ij}\in C^1_b(\R^{1+N})$ satisfy
$a_{ij}=a_{ji}$ and
$$
\sum_{i,j=1}^Na_{ij}(s,x)\xi_i\xi_j\ge \eta_0 |\xi |^2,
$$
 for all $x,\,\xi \in \R^N$, $s\in\R$, $i,j\in\{1,\ldots,N\}$ and
some constant $\eta_0 >0.$
\item[{\bf (A2)}] $W\in C^1(\R^{1+N})$ is a function such that
$W\ge c_0>0$, $|D_sW| \le \beta W^2 + K_\beta$ and
$|\nabla_x W|\le \gamma W^{\frac{3}{2}}+ K'_\gamma$
for some constants $c_0, \beta,\gamma>0$ and $K_\beta, K'_\gamma \geq 0$.
\item[{\bf (A3)}] $V:\R^{1+N}\to\R$ is measurable and
$ W\le V\le c_1 W$ for some constant $c_1\ge1$.
\item[{\bf (A4)}] $F\in C(\R^{1+N},\R^N)$ satisfies
$|F|\le \kappa W^{\frac{1}{2}}$ for some constant $\kappa \ge 0$.
\item[{\bf (A5)}] $F\in C^{0,1}(\R^{1+N},\R^N)$ and there
exists a constant $\theta \in[0,p)$ such that
$\theta W+{\Div_x}\,F\ge 0,$ where $p\in[1,+\infty)$ is given.
\end{enumerate}

Later on we will impose additional restrictions on the size of $\beta$ and $\gamma$,
see \eqref{small}.
Due to \eqref{VF-cond} the functions $Q$, $F_O$, $\lambda+V_O$ and $W_O$ from
the previous section satisfy (A1)--(A5) with $\beta=K_\beta=0$ and
arbitrarily small $\gamma>0$, for each $p\in(1,+\infty)$. Except for the
estimate on $D_sW$, the hypotheses (A1)--(A5) were already used
 in \cite{MPRS} in the autonomous case. We want to discuss them shortly,
referring the reader to \cite{MPRS} for more details and further references.
Of course, (A1) gives uniform ellipticity. Assumption (A4)
allows us to control the drift term by the potential. (But note that
the drift term is not a small perturbation, cf.\ \cite[Remark~3.6]{MPRS}.)
The inequality in (A5) is a slightly strengthened  dissipativity condition
for $\cL$. The crucial hypothesis is (A2) which restricts the oscillation
of the
auxiliary potential $W$, whereas (A3) allows to compare $V$ and $W$.
The use of $W$ gives some more flexibility in the applications
(as already exploited in \cite[Section 7]{MPRS}). Example~3.7 in \cite{MPRS}
shows that one cannot omit (A2) and that even the restriction in
\eqref{small} is almost sharp in certain cases.

In this section
we want to show that  $\cL$, endowed with  the domain
\[
\D_p:= \{ u\in W^{1,2}_p(\R^{1+N}): Wu \in L^p(\R^{1+N})\},
\]
generates a $C_0$-semigroup on $ L^p(\R^{1+N})$ and we want to exploit
this fact in the study of \eqref{ou-line} and its variant
\eqref{parab-eq} for $\A(\cdot)$. In the next section we also use the domains
\begin{align*}
\D_1&:=\{u\in L^1(\R^{1+N}):  (\Delta-D_s)u, Wu\in L^1(\R^{1+N})\},\\
\D_\infty&:=\{u\in C_0(\R^{1+N}): (\Delta-D_s)u, Wu\in C_0(\R^{1+N})\}\\
   &\,=\{u\in C_0(\R^{1+N})\cap W^{1,2}_{q,\loc}(\R^{1+N}) \;\forall\,q<+\infty:
   (\Delta-D_s)u, Wu\in C_0(\R^{1+N})\},
\end{align*}
where the last equality follows from standard local parabolic regularity.
The spaces $\D_p$, $1\le p\le +\infty$, are endowed with their
natural norms given by
\begin{align*}
\|u\|_{\D_p}^p&= \|u\|_{W^{1,2}_p(\R^{1+N})}^p+\|Wu\|_p^p\,,
           \qquad 1<p<+\infty,\\
\|u\|_1 &= \|(\Delta-D_s)u\|_1 + \|Wu\|_1, \\
  \|u\|_\infty &= \max\{\|(\Delta-D_s)u\|_\infty\,, \|Wu\|_\infty\}.
\end{align*}
Note that in the definitions of the spaces $\D_p$ and their norms, one could replace everywhere
$W$  by $V$ getting the same sets and equivalent norms (where $V$ is assumed
to be continuous if $p=\infty$). We recall that the norm on
$W^{1,2}_p(\R^{1+N})$ is equivalent to the graph norm of $\Delta-D_s$
on  $ L^p(\R^{1+N})$ if $p\in(1,+\infty)$. At first, we prove  three more
or less standard facts  for every $p\in[1,+\infty]$,

\begin{lem}\label{dissip}
Assume that hypothesis {\rm (A1)} is satisfied. Then, the following assertions
hold.
\begin{enumerate}[(a)]
\item
If $F\in C^{0,1}(\R^{1+N},\R^{N})$,  $V\in L^p_{\loc}(\R^{1+N})$,  and
$\;V+\frac{1}{p}\Div_x F\ge 0\;$
for some  $1\le p<+\infty$, then  $(\cL, C_c^\infty(\R^{1+N}))$
is dissipative in $L^p(\R^{1+N})$.
\item
If $F\in C(\R^{1+N},\R^N)$, $V\in C(\R^{1+N})$ and $V\ge0$,
then  $(\cL, C_c^\infty(\R^{1+N}))$ is dissipative in $C_0(\R^{1+N})$.
\end{enumerate}
\end{lem}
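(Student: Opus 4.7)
The plan for part (a) is a standard $L^p$-energy argument: pair $\cL u$ pointwise with the duality element $u|u|^{p-2}$ for $u\in C_c^\infty(\R^{1+N})$ and integrate over $\R^{1+N}$. The time-derivative contribution equals $-\frac{1}{p}\int D_s|u|^p\,ds\,dx$ and vanishes since $u$ has compact support in $s$. An integration by parts in $x$ turns the divergence-form diffusion term into $-(p-1)\int |u|^{p-2}\langle a\nabla_x u,\nabla_x u\rangle\,ds\,dx$, which is nonpositive by ellipticity (A1). Writing the drift term as $\frac{1}{p}F\cdot\nabla_x|u|^p$ and integrating by parts once more combines it with the potential term to yield $-\int (V+\frac{1}{p}\Div_x F)|u|^p\,ds\,dx\le 0$ by the standing hypothesis. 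Summing the three contributions produces the nonpositive duality pairing that gives $L^p$-dissipativity.

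The pointwise pairing $u|u|^{p-2}$ fails to be smooth on $\{u=0\}$ when $1\le p<2$, so I would justify the identities above by replacing $|u|^p$ with the convex smooth approximation $\phi_\eps(u):=(u^2+\eps)^{p/2}$. Its derivative $\phi_\eps'(u)$ is a bounded regularization of $u|u|^{p-2}$ (reducing to $u/\sqrt{u^2+\eps}$ for $p=1$), while $\phi_\eps''\ge 0$ preserves the sign of the diffusion integral. Since $u\in C_c^\infty$ makes $V$, $F$ and $\Div_x F$ bounded on $\supp u$, dominated convergence recovers the identity in the limit $\eps\to 0^+$. A minor bookkeeping point in the drift integration by parts is that $\phi_\eps(u)-\eps^{p/2}$ has compact support, so no boundary terms arise even if $F$ itself does not decay.

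For part (b) the argument is a short maximum-principle computation. Given $u\in C_c^\infty(\R^{1+N})$, replace $u$ by $-u$ if necessary so that $\|u\|_\infty = u(s_0,x_0)>0$ at some interior point $(s_0,x_0)$. There $\nabla_x u=0$, $D_s u=0$ and $D_x^2 u\le 0$, so ellipticity (A1) gives $\tr(a(s_0,x_0)D_x^2 u(s_0,x_0))\le 0$; the first-order terms vanish, and $-Vu\le 0$ since $V\ge 0$ and $u(s_0,x_0)>0$. Hence $(\cL u)(s_0,x_0)\le 0$, and for any $\lambda>0$
\[\|\lambda u-\cL u\|_\infty \ge (\lambda u-\cL u)(s_0,x_0)\ge \lambda u(s_0,x_0)=\lambda\|u\|_\infty,\]
which is the $C_0$-dissipativity of $\cL$. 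The only real technical point in the whole lemma is the regularization in part (a) for $p\in[1,2)$, handled by the convex approximation described above.
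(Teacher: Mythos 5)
Your argument is correct and follows essentially the same route as the paper's: pair $\cL u$ with $u|u|^{p-2}$, observe that the elliptic part is nonpositive and the time-derivative contributes $-\frac{1}{p}\int D_s|u|^p=0$ by compact support, and use an interior maximum for the $C_0$ case. The only difference is that the paper is terser — it cites \cite[Lemma~2.6]{MPRS} for the spatial inequality $\int_{\R^N}(\A(s)\varphi)\varphi|\varphi|^{p-2}\,dx\le 0$ and invokes "the maximum principle" without detail — whereas you re-derive both from scratch, carefully handling the $p\in[1,2)$ regularization that justifies the integration by parts.
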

\begin{proof}
 Let $1\le p<+\infty$. It is known that
\[
\int_{\R^N} (\A(s)\varphi) \,\varphi|\varphi|^{p-2}\,dx \le 0,
\]
 for all $\varphi\in C_c^\infty(\R^N)$ and $s\in\R$,
see e.g.,\ \cite[Lemma~2.6]{MPRS}. For $u\in C_c^\infty(\R^{1+N})$ we thus
obtain
\[
 \int_{\R^{1+N}} (\A(\cdot) u -D_s u)\,u |u|^{p-2}\,ds\,dx
 \le  -\frac{1}{p} \int_{\R^{1+N}} D_s\, |u|^p\,ds\,dx = 0.
\]
This shows assertion (a).  The dissipativity of $\cL$ in $C_0(\R^{1+N})$
is a  standard consequence of the maximum principle.
\end{proof}

\begin{lem}\label{interpol}
For every $u \in C_c^\infty (\R^{1+N})$ and  $1 \le p\le +\infty$,  we have
\begin{equation*}
\|\nabla_x u\|_p \leq C\,\|(\Delta-D_s) u\|_p ^{\frac{1}{2}}\,
            \|u\|_p^{\frac{1}{2}},
\end{equation*}
with a constant $C>0$ depending only on $N$.
\end{lem}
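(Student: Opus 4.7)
My plan is to derive the estimate from Duhamel's representation of $u$ via the heat semigroup combined with a $p$-uniform gradient estimate for that semigroup. Let $T(t)=e^{t\Delta}$ denote the heat semigroup on $\R^N$, with kernel $K_t(x)=(4\pi t)^{-N/2}e^{-|x|^2/(4t)}$. A direct computation shows $\|\nabla K_t\|_{L^1(\R^N)}\le C_N\,t^{-1/2}$, so Young's convolution inequality yields
\[
\|\nabla T(t) g\|_{L^p(\R^N)} \le C_N\, t^{-1/2} \|g\|_{L^p(\R^N)}, \qquad 1\le p\le+\infty,
\]
with a constant $C_N$ depending only on $N$ (and, crucially, independent of $p$).

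For $u\in C_c^\infty(\R^{1+N})$ I will derive Duhamel's formula in the space variable: for every $\tau>0$ and $s\in\R$,
\[
u(s,\cdot) = T(\tau)\, u(s-\tau,\cdot) - \int_0^\tau T(r)\,[(\Delta-D_s)u](s-r,\cdot)\,dr,
\]
which follows from differentiating $t\mapsto T(\tau-t)u(s-\tau+t,\cdot)$ on $[0,\tau]$, since the derivative produces exactly $-T(\tau-t)[(\Delta-D_s)u](s-\tau+t,\cdot)$. Applying $\nabla_x$, invoking the gradient bound above in $L^p_x$ for each fixed $s$, and then taking the $L^p_s$-norm (the supremum when $p=+\infty$) through Minkowski's integral inequality and translation invariance in $s$ gives
\[
\|\nabla_x u\|_p \le C_N\, \tau^{-1/2} \|u\|_p + 2 C_N\, \tau^{1/2} \|(\Delta-D_s)u\|_p.
\]

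The proof will conclude by optimizing the right-hand side in $\tau>0$, yielding a constant of order $C_N$ in front of $\|u\|_p^{1/2}\|(\Delta-D_s)u\|_p^{1/2}$. In the degenerate case where $(\Delta-D_s)u\equiv 0$, the compact support of $u$ combined with forward uniqueness for the heat equation forces $u\equiv 0$, so the optimization is legitimate. I anticipate no substantive obstacle: the key point is that both Young's inequality for $\nabla K_t$ and Minkowski's integral inequality produce constants independent of $p$, which is exactly what allows the final constant $C$ to depend only on $N$ as required. The only careful step is to postpone taking the $L^p_s$-norm until after the pointwise-in-$s$ estimate is written in convolution form, so that Minkowski's inequality applies cleanly and the $\int_0^\tau r^{-1/2}\,dr=2\tau^{1/2}$ factor emerges.
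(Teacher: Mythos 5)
Your proof is correct and is essentially the same argument as in the paper: both rely on representing $u$ through the heat semigroup and the $t^{-1/2}$ gradient bound $\|\nabla_x T(t)g\|_p\le C_N t^{-1/2}\|g\|_p$, then optimize over a free parameter. The paper writes $u=(\lambda-(\Delta-D_s))^{-1}f$ with $f=\lambda u-(\Delta-D_s)u$ and optimizes over $\lambda$, whereas you use the finite-horizon Duhamel formula and optimize over $\tau$; these are dual parametrizations of the same interpolation inequality, and your side remark about the degenerate case is handled identically by the paper's choice $\lambda=\|(\Delta-D_s)u\|_p/\|u\|_p$.
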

\begin{proof}
For a given $\lambda>0$ and $u \in C_c^\infty (\R^{1+N})$,
 we set $f=\lambda u-(\Delta-D_s) u$. Let $G_p(\cdot)$ be the heat semigroup
generated by $\Delta$ on $L^p(\R^N)$ for $1\le p<+\infty$, and on
$C_0(\R^N)$ for $p=+\infty$, respectively. The variation of constants formula
yields
\[
u(t)=\int_{-\infty}^t e^{-\lambda (t-s)}G_p(t-s)f(s)\, ds,
\]
for all $t\in\R$. Using the well known estimate
$ \sqrt{s}\, \|\nabla_x G_p(s)\varphi\|_p \le c\, \|\varphi\|_p $
valid for every $s>0$ and  $\varphi\in L^p(\R^N)$ or $\varphi\in C_0(\R^N)$,
respectively  (where $c=c(N)$ is a constant), we deduce that
\[
\|\nabla_x u(t)\|_p \le \int_{-\infty}^t
  \frac{ce^{-\lambda(t-s)}}{\sqrt {t-s}} \|f(s)\|_p\,ds,
\]
for all $t\in \R$. Young's inequality then implies
\[
\|\nabla_x u\|_p \le \frac{c\sqrt{\pi}}{\sqrt {\lambda}} \,\|f\|_p
\le\frac{c\sqrt{\pi}}{\sqrt {\lambda}}\,(\lambda\,\|u\|_p
+\|(\Delta-D_s) u\|_p),
\]
for each $\lambda>0$. The assertion follows if we take
$\lambda=\|(\Delta-D_s) u\|_p \,\|u\|_p^{-1}$.
\end{proof}

\begin{lem}\label{density}
Assume that $W\in C(\R^{1+N})$ satisfies $W\ge c_0>0$.
Then,  $C_c^\infty (\R^{1+N})$ is dense in $\D_p$ for
$1 \le p \le +\infty$.
\end{lem}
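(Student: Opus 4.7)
The plan is the classical truncate-then-mollify argument; the only delicate point will be the control of one cross term at the endpoints $p\in\{1,+\infty\}$, where the $\D_p$-norm involves only $(\Delta-D_s)u$ and $Wu$, not the individual derivatives of $u$.

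Fix $u\in\D_p$ and pick cutoffs $\phi_n\in C_c^\infty(\R^{1+N})$ with $\phi_n=1$ on $B(0,n)$, $\phi_n=0$ outside $B(0,2n)$, $|D_s\phi_n|+|\nabla_x\phi_n|\le C/n$ and $|D^2_x\phi_n|\le C/n^2$. I first want to prove $\phi_n u\to u$ in $\D_p$. The convergences $\phi_n u\to u$ and $W\phi_n u\to Wu$ in the base space follow from dominated convergence (for $p<+\infty$, using $Wu\in L^p$) or from $u,Wu\in C_0$ (for $p=+\infty$). For the derivative part I would expand by Leibniz. When $1<p<+\infty$, the derivatives of $\phi_n u$ up to order two in $x$ and one in $s$ are easily handled, since $u\in W^{1,2}_p$ controls every ingredient and the bad factors are of order $C/n$ or $C/n^2$. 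When $p\in\{1,+\infty\}$, the identity
\[
(\Delta-D_s)(\phi_n u)=\phi_n(\Delta-D_s)u+2\nabla_x\phi_n\cdot\nabla_x u+((\Delta-D_s)\phi_n)\,u
\]
leaves the critical cross term $2\nabla_x\phi_n\cdot\nabla_x u$, which requires $\nabla_x u\in L^p$. To secure this I would extend Lemma~\ref{interpol} to the whole of $\D_p$: the variation-of-constants representation used in the proof of that lemma, combined with uniqueness of the heat equation on $L^p(\R^{1+N})$ or $C_0(\R^{1+N})$, yields
\[
\|\nabla_x u\|_p\le C\,\|(\Delta-D_s)u\|_p^{1/2}\,\|u\|_p^{1/2}\qquad\text{for every }u\in\D_p.
\]
The cross term is then $O(1/n)$ in $L^\infty$, and for $p=1$ it is even $o(1)$ by absolute continuity of the integral of $|\nabla_x u|$ on the annular regions $\{n\le|(s,x)|\le 2n\}$.

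In the second step I mollify. For each fixed $n$ set $v_{n,\eps}:=\rho_\eps\ast(\phi_n u)\in C_c^\infty(\R^{1+N})$, with $\rho_\eps$ a standard mollifier on $\R^{1+N}$. Since $\rho_\eps$ commutes with every partial derivative and $\phi_n u$ has compact support, classical properties of convolution give convergence of $v_{n,\eps}$ and of all its relevant derivatives to $\phi_n u$ and its corresponding derivatives, in $L^p$ for $p<+\infty$ and uniformly for $p=+\infty$. For the weighted term I estimate $\|W(v_{n,\eps}-\phi_n u)\|_p\le M_n\|v_{n,\eps}-\phi_n u\|_p\to 0$, where $M_n$ is the supremum of the continuous function $W$ on the fixed compact set containing the supports of $\{v_{n,\eps}\}_\eps$. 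A diagonal extraction then produces a sequence in $C_c^\infty(\R^{1+N})$ converging to $u$ in $\D_p$.

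The main obstacle is thus the endpoint analysis of the cross term at $p\in\{1,+\infty\}$: since the $\D_p$-norm does not directly control $\nabla_x u$, one must first enlarge Lemma~\ref{interpol} from $C_c^\infty$-functions to the whole of $\D_p$. No genuinely new machinery is needed for this enlargement, only the same variation-of-constants argument already carried out in the proof of Lemma~\ref{interpol}, but it has to be performed before one may invoke the gradient estimate on general elements of $\D_p$.
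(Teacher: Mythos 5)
Your proof follows the same two-step truncate-then-mollify route as the paper, with the same Leibniz expansion of $(D_s-\Delta)(\eta_n u)$. The one thing you do beyond what the paper writes is to flag and handle the cross term $2\nabla_x\eta_n\cdot\nabla_x u$ at the endpoints $p\in\{1,+\infty\}$, where the $\D_p$-norm controls $(\Delta-D_s)u$ and $Wu$ but not $\nabla_x u$ directly; the paper's proof silently assumes the gradient bound, whereas you correctly note that Lemma~\ref{interpol} must first be extended from $C_c^\infty(\R^{1+N})$ to all of $\D_p$. Your extension argument is sound: given $u\in\D_p$, set $f=\lambda u-(\Delta-D_s)u\in L^p$ (resp.\ $C_0$) and compare $u$ with $\int_{-\infty}^t e^{-\lambda(t-s)}G_p(t-s)f(s)\,ds$; the difference is an $L^p$ (resp.\ $C_0$) distributional solution of $(\lambda-\Delta+D_s)w=0$, and the Fourier transform in $(s,x)$ gives $(\lambda+|\xi|^2+i\sigma)\widehat w=0$, hence $w=0$. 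This yields $\|\nabla_x u\|_p\le C\|(\Delta-D_s)u\|_p^{1/2}\|u\|_p^{1/2}$ on all of $\D_p$, after which the rest of your argument, including the mollification step with $W$ bounded on the fixed compact support, matches the paper. So: same approach, with a small gap in the paper's exposition correctly filled.
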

\begin{proof}
Let $\eta$ be a cutoff function  on $\R^{1+N}$ such that $\1_{B(0,1)}\le\eta\le\1_{B(0,2)}$.
Define $\eta_n(t,x)=\eta(t/n,x/n)$ for all $(t,x)\in\R^{1+N}$ and $n\in\N$.
Let $u\in \D_p$. Then, $\eta_n u \to u$ and $W\eta_n u \to Wu$ as $n \to +\infty$ in
$L^p(\R^{1+N})$. Moreover,
\[ (D_s-\Delta)(\eta_n u)= \eta_n (D_s-\Delta) u + u (D_s-\Delta) \eta_n
        - 2 \langle \nabla_x u, \nabla_x \eta_n\rangle.\]
Since the derivatives of $\eta_n$ tend to 0 in the sup-norm as $n\to+\infty$,
the functions $(D_s-\Delta)(\eta_n u)$ converge to $(D_s-\Delta) u $
in  $L^p(\R^{1+N})$. Hence, the
set of all functions in $\D_p$ having compact support is dense in $\D_p$.
On the other hand, if $u \in \D_p$ has compact support, a standard convolution
argument shows the existence of a sequence of smooth functions with compact
support converging to $u$ in $\D_p$, since $W$ is bounded in each
neighborhood  of the support of $u$.
\end{proof}

The next result is again proved for all $p\in [1,+\infty]$.
It will allow us to control the drift term by the heat
operator and the potential.

\begin{prop}\label{interpol1}
Let $W$ be a function satisfying {\rm (A2)}. Then, there exists
a constant $\alpha>0$ (depending only on $N$,
$\beta$, $\gamma,$ $K_\beta$, $K'_\gamma$, $c_0$) such that
\begin{equation}\label{u-nabla-est}
\|W^{\frac{1}{2}}\nabla_x u\|_p \le \eps\, \|(\Delta-D_s) u\|_p
     +\frac{\alpha}{\eps}\,\|Wu\|_p,
\end{equation}
for all  $\eps\in(0,1]$, $1\le p\le +\infty$, and  $u\in \D_p$.
\end{prop}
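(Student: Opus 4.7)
The plan is to localize via a parabolic Besicovitch-type covering adapted to the potential $W$, apply Lemma~\ref{interpol} on each patch after parabolic rescaling, and then combine the local estimates so that for $\varepsilon$ small the weighted gradient on the right can be absorbed. By Lemma~\ref{density} it suffices throughout to prove the inequality for $u\in C_c^\infty(\R^{1+N})$.

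For the covering, for each $z=(s,x)\in\R^{1+N}$ I set $r(z)=\kappa_0 W(z)^{-1/2}$, where $\kappa_0=\kappa_0(\beta,\gamma,K_\beta,K'_\gamma,c_0)\in(0,1]$ is chosen so small that hypothesis (A2), integrated along line and time segments starting at $z$ and using the lower bound $W\ge c_0$, forces $W$ to be comparable to $W(z)$ (within a factor of $2$, say) on the parabolic cylinder $\hat B(z):=(s-r(z)^2,s+r(z)^2)\times B(x,r(z))$. The parabolic Besicovitch theorem from the Appendix then selects centres $z_k=(s_k,x_k)$ with radii $r_k=r(z_k)$ such that the $\hat B(z_k)$ cover $\R^{1+N}$ with bounded overlap $M=M(N)$. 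Let $\{\phi_k\}$ be a smooth partition of unity subordinate to this cover with the natural derivative bounds $|\nabla_x\phi_k|\le c r_k^{-1}$ and $|D_s\phi_k|+|\Delta\phi_k|\le c r_k^{-2}$.

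For the local estimate, set $u_k=\phi_k u\in C_c^\infty(\hat B(z_k))$: the parabolic rescaling $(s,x)=(s_k+r_k^2 t,x_k+r_k y)$ leaves Lemma~\ref{interpol} scale-invariant, so, after unscaling and using $W\asymp r_k^{-2}$ on $\hat B(z_k)$ together with Young's inequality, we obtain
\begin{equation*}
\|W^{1/2}\nabla_x u_k\|_p \le C_1\varepsilon\|(\Delta-D_s)u_k\|_p+\tfrac{C_1}{\varepsilon}\|Wu_k\|_p
\end{equation*}
for every $\varepsilon\in(0,1]$. To assemble these I expand
\begin{equation*}
(\Delta-D_s)u_k=\phi_k(\Delta-D_s)u+u(\Delta-D_s)\phi_k+2\nabla_x\phi_k\cdot\nabla_x u,
\end{equation*}
control the commutator terms pointwise via $|\nabla_x\phi_k|\lesssim W^{1/2}\1_{\hat B(z_k)}$ and $|\Delta\phi_k|+|D_s\phi_k|\lesssim W\1_{\hat B(z_k)}$, and sum $p$-th powers (or take suprema if $p=+\infty$) over $k$, using the bounded overlap of the $\hat B(z_k)$. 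This produces a global inequality of the form
\begin{equation*}
\|W^{1/2}\nabla_x u\|_p^p \le C_2\varepsilon^p\bigl(\|(\Delta-D_s)u\|_p^p+\|Wu\|_p^p+\|W^{1/2}\nabla_x u\|_p^p\bigr)+\tfrac{C_2}{\varepsilon^p}\|Wu\|_p^p,
\end{equation*}
and for $\varepsilon\le\varepsilon_0$ with $C_2\varepsilon_0^p\le 1/2$ the gradient term on the right is absorbed; the estimate for the remaining $\varepsilon\in(\varepsilon_0,1]$ follows trivially with a possibly larger constant $\alpha$.

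The main technical obstacle is the calibration of the covering to $W$: one has to verify that the powers appearing in (A2)---namely $W^2$ for $|D_sW|$ and $W^{3/2}$ for $|\nabla_x W|$---are precisely those that make $W$ slowly varying on parabolic cylinders of time-radius $r^2=\kappa_0^2 W^{-1}$ and space-radius $r=\kappa_0 W^{-1/2}$, i.e.\ on the natural parabolic scales attached to the potential. This matching, together with the existence of a covering of $\R^{1+N}$ by such cylinders with bounded overlap---the parabolic Besicovitch theorem proved in the Appendix---is what distinguishes the present argument from its autonomous counterpart in \cite{MPRS} and accounts for the exponents in (A2).
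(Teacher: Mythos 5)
Your proposal is correct and follows essentially the same approach as the paper's proof: localization on parabolic cylinders at the $W$-adapted scale $r\sim W^{-1/2}$ and $\tau\sim W^{-1}$, the Besicovitch-type covering of Proposition~\ref{prop-besicovitch}, application of Lemma~\ref{interpol} on each patch, control of the commutator terms, summation with bounded overlap, and final absorption. The only material difference is cosmetic: you use a partition of unity $\{\phi_k\}$ and estimate $\|W^{1/2}\nabla u\|_p^p \lesssim \sum_k\|W^{1/2}\nabla(\phi_k u)\|_p^p$, whereas the paper applies Lemma~\ref{interpol} to $\zeta\eta u$ for cutoffs on concentric cylinders $Q'\subset Q$, then directly sums $\|W^{1/2}\nabla u\|_{p,Q'_k}^p$ over the covering sub-cylinders; the paper also explicitly tunes $\ell_1,\ell_2$ so that $Q'(s_0,x_0)$ equals the $d$-metric ball $B_d((s_0,x_0),\varrho(s_0,x_0))$ and verifies the Lipschitz constant of $\varrho$ is below $1/4$, ensuring Proposition~\ref{prop-besicovitch} applies. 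One small point you should make precise if writing this up: a partition of unity subordinate to $\{\hat B(z_k)\}$ with the stated derivative bounds requires the $\phi_k$ to equal $1$ on a sub-collection that still covers, which is most easily arranged by supporting $\phi_k$ in the enlarged balls $B_d(z_k,2\varrho_k)$ (which have bounded overlap provided the Lipschitz constant of $\varrho$ is $<1/2$) rather than in the covering balls themselves. This is exactly the $Q'\subset Q$ device in the paper, so it changes nothing of substance.
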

\begin{proof}
It suffices to show the proposition for test functions $u$.
Lemma~\ref{density} then allows us to extend the result to all $u\in\D_p$
by approximation.
We further can replace $W$ by $W+\lambda$ for some $\lambda\ge0$ such
that (A2) holds for $W+\lambda$ with $K_\beta=K'_\gamma=0$. Since
$W\ge c_0>0$, the estimate
\eqref{u-nabla-est} for $W+\lambda$ implies \eqref{u-nabla-est} for $W$
(with a different $\alpha$). So, we may and will assume that
$K_\beta=K'_\gamma=0$  in (A2).  Hence,
\begin{equation}\label{w-est}
 |D_s W^{-1}|\le \beta \quad\text{and}\quad
     |\nabla_x W^{-\frac{1}{2}}|\le\frac{\gamma}{2}
   \qquad \text{in \ }\R^{1+N}.
\end{equation}
In what follows we write $\nabla$ instead of $\nabla_x$. Our
arguments rely on a localization procedure in space and time.
We set  $\tau:=\tau(s_0,x_0)=(4\beta\ell_1W(s_0,x_0))^{-1}$ for every
given $(s_0,x_0) \in \R^{1+N}$ and a number $\ell_1\ge 1$ to be fixed later.
Since $\tau\le (4\beta W(s_0,x_0))^{-1}$, from \eqref{w-est} it
follows that
\begin{align*}
\frac{4}{5}\, W(s,x_0)^{-1} &\le   W(s_0,x_0)^{-1}
      \le \frac43\, W(s,x_0)^{-1}, \\[1mm]
\frac{\sqrt{3}}{2}\, W(s,x_0)^\frac12 &\le   W(s_0,x_0)^\frac12
             \le \frac{\sqrt{5}}{2}\, W(s,x_0)^\frac12,
\end{align*}
for all $t\in\R$ with $|t-s_0|\le\tau$. We next set $r:=r(s_0,x_0)=
\sqrt{3}(2\ell_2 \gamma W(s_0,x_0)^\frac12)^{-1}$ for a number
$\ell_2\ge 1$ to be chosen later. Note that $r\le
(\ell_2\gamma)^{-1} W(s,x_0)^{-\frac12}$ for every $ t\in
(s_0-\tau,s_0+\tau)$. Estimate \eqref{w-est} then implies
\[
\frac{2\ell_2-1}{2\ell_2}\, W(s,x)^\frac12 \le   W(s,x_0)^\frac12
\le \frac{2\ell_2+1}{2\ell_2}\, W(s,x)^\frac12,
\]
for all $x\in B(x_0,r)$ and $s\in (s_0-\tau,s_0+\tau)$.
We thus obtain
\begin{equation}\label{osc}
\frac{(2\ell_2 -1)\sqrt{3}}{4\ell_2}\, W(s,x)^\frac12 \le
W(s_0,x_0)^\frac12
             \le \frac{(2\ell_2+1)\sqrt{5}}{4\ell_2}\, W(s,x)^\frac12,
\end{equation}
for all $(s,x)$ in the parabolic cylinder
$Q=Q(s_0,x_0):=(s_0-\tau,s_0+\tau)\times B(x_0,r)$. We now
choose functions $\eta \in C_c^\infty (\R^N)$ and $\zeta\in
C_c^\infty(\R)$ such that $\1_{B(x_0,r/2)}\le\eta\le\1_{B(x_0,r)}$,
$\1_{(s_0-\tau/2, s_0+\tau/2)}\le\zeta\le\1_{(s_0-\tau, s_0+\tau)}$,
$|\nabla \eta| \leq c/r$, $|D^2 \eta| \leq c/r^2$ and $|D_s \zeta|
\le c/\tau$ for a constant $c$ independent of  $s_0$, $x_0$, $\tau$
and $r$. We set $Q'=Q'(s_0,x_0):=(s_0-\tau/2,s_0+\tau/2)\times
B(x_0,r/2)$ and denote the $p$-norms on $Q'$ and $Q$ by the
additional indexes $Q'$ and $Q$, respectively, for $1\le p\le
+\infty$. Using \eqref{osc}, Lemma~\ref{interpol},  the definitions
of $r,\tau$ and Young's inequality, we compute
\begin{align}
\|&W^\frac12 \nabla u\|_{p,Q'} \le c\,W(s_0,x_0)^\frac12\,\|\nabla u\|_{p,Q'}
  \le  c\,W(s_0,x_0)^\frac12\,\|\nabla (\zeta \eta u)\|_p \notag\\
  &\le c \, \|(\Delta-D_s)(\zeta \eta u)\|_p^{\frac{1}{2}}
        \,\| W(s_0,x_0) \zeta\eta u\|_p^{\frac{1}{2}}\notag\\
 &\leq  c\,\|Wu\|_{p,Q}^\frac12 \Big(\| (\Delta-D_s)u\|_{p,Q}
        + \frac{1}{r}\, \|\nabla u\|_{p,Q}
   +\Big(\frac{1}{\tau} + \frac{1}{r^2}\Big) \|u\|_{p,Q} \Big)^\frac12\notag\\
&\le  c\,\|Wu\|_{p,Q}^\frac12 \,\| (\Delta-D_s)u\|_{p,Q}^\frac12
    + c\gamma \,\|Wu\|_{p,Q}^\frac12 \, \|W^\frac12 \nabla u\|_{p,Q}^\frac12
      +c(\beta+\gamma^2)\, \|Wu\|_{p,Q}\notag \\
& \le \delta \, \|W^\frac12 \nabla u\|_{p,Q}
       +\varepsilon\, \| (\Delta-D_s)u\|_{p,Q}
       + \frac{c(\delta)}{\varepsilon}\, \|Wu\|_{p,Q}, \label{w-grad-est}
\end{align}
for each $\delta, \varepsilon\in(0,1]$, where  the constants $c$  only depend
on $N$, $b$, $\ell_1$, $\ell_2$, and the last one also on $\delta$,
where $b$ is any number such that  $0<\beta, \gamma\le b$.

In the case $p=+\infty$, we fix $\ell_1=\ell_2=1$ and note that
inequality \eqref{w-grad-est} trivially yields
\[W^\frac12(s_0,x_0)\, |\nabla u(s_0,x_0) |
    \le \delta \, \|W^\frac12 \nabla u\|_\infty
       +\varepsilon\, \| (\Delta-D_s)u\|_ \infty
       + \frac{c(\delta)}{\varepsilon}\, \|Wu\|_\infty.\]
We now fix $\delta=1/2$ and take the supremum
over $(s_0,x_0)\in\R^{1+N}$ of the left hand side. The assertion then follows.

For $p\in[1,+\infty)$, we take advantage of Proposition
\ref{prop-besicovitch}. For this purpose, we fix the parameters
$\ell_1$ and $\ell_2$ in the following way:
\begin{eqnarray*}
\ell_1=\frac{2\ell_2^2 \gamma^2}{3\beta},\qquad\;\,
\ell_2=\max\left\{\sqrt{3}\left
(\frac{1}{2}+\frac{\sqrt{\beta}}{\gamma}\right),1\right\}.
\end{eqnarray*}
Clearly, $\ell_1,\ell_2\ge 1$. Moreover, since
$\sqrt{\frac{\tau(s_0,x_0)}{2}}=\frac{r(s_0,x_0)}{2}$ for any
$(s_0,x_0)\in\R^{1+N}$, the cylinder $Q'(s_0,x_0)$ coincides with
the ball $B_d((s_0,x_0),\varrho(s_0,x_0))$ centered at $(s_0,x_0)$
and with radius $\varrho(s_0,x_0):=\sqrt{3}(4\ell_2\gamma
W(s_0,x_0)^{\frac{1}{2}})^{-1}=\frac12 r(s_0,x_0)$, in the metric $d$  (see
\eqref{metric} and \eqref{cylinder}), whereas $Q(s_0,x_0)$ is
properly contained in $B_d((s_0,x_0),2\varrho(s_0,x_0))$. Further,
using \eqref{w-est} we can easily estimate
\begin{align*}
|W(s,&x)^{-\frac{1}{2}}-W(s_0,x_0)^{-\frac{1}{2}}|\\
&\le
|W(s,x)^{-\frac{1}{2}}-W(s,x_0)^{-\frac{1}{2}}| +
|W(s,x_0)^{-\frac{1}{2}}-W(s_0,x_0)^{-\frac{1}{2}}|\\
& \le |W(s,x)^{-\frac{1}{2}}-W(s,x_0)^{-\frac{1}{2}}| +
|W(s,x_0)^{-1}-W(s_0,x_0)^{-1}|^{\frac{1}{2}}\\
& \le \frac{\gamma}{2}|x-x_0|
+\sqrt{\beta}|s-s_0|^{1/2}
\le \left (\frac{\gamma}{2}+\sqrt{\beta}\right
)d((s,x),(s_0,x_0)),
\end{align*}
for any $(s,x), (s_0,x_0)\in\R^{1+N}$. The choice of $\ell_2$
implies that the function $\varrho$ is Lipschitz continuous in
$\R^{1+N}$ with Lipschitz constant not greater than $1/4$. Hence,
Proposition \ref{prop-besicovitch} guarantees the existence of a
countable covering $Q_k'=Q'(s_k,x_k)$ of $\R^{1+N}$ such each
$(s,x)\in\R^{1+N}$ is contained in at most
$K(N)$ of the cylinders $Q_k=Q(s_k,x_k)$, for some integer
$K(N)$. Inequality \eqref{w-grad-est} now implies that
\begin{align*}
\|W^\frac12 \nabla u\|_p^p &\le \sum_{k=1}^{+\infty}
           \|W^\frac12 \nabla u\|_{p,Q'_k}^p \\
 &\le 3^{p-1}\sum_{k=1}^{+\infty} \Big(
      \delta^p \,\|W^\frac12 \nabla u\|_{p,Q_k}^p
        +\varepsilon^p\, \| (\Delta-D_s)u\|_{p,Q_k}^p
       + \frac{c(\delta)^p}{\varepsilon^p}\, \|Wu\|_{p,Q_k}^p\Big)\\
  &\le 3^{p-1} K(N) \Big(\delta^p \,\|W^\frac12 \nabla u\|_{p}^p
        +\varepsilon^p\, \| (\Delta-D_s)u\|_{p}^p
       + \frac{c(\delta)^p}{\varepsilon^p}\, \|Wu\|_{p}^p\Big).
\end{align*}
Fixing $\delta=(3K(N))^{-1}$, we get the assertion also for
 $p\in[1,+\infty)$.
\end{proof}

\begin{rem}\label{alpha-rem}
The above proof shows the following fact (cf.\ the remarks after
\eqref{w-grad-est}). Assume that (A2) holds for some
$\beta,\gamma\in(0,b]$ with $K_\beta=K'_\gamma=0$. Let $p=+\infty$.
Then the constant
$\alpha$ in Proposition~\ref{interpol1} only depends on $N$ and $b$.
\end{rem}

Assume that (A1) holds and fix $p\in (1,+\infty)$. It is known that
the realization in $L^p(\R^{1+N})$ of
the operator $\Div_x(a\nabla_x) -D_s$ with domain $ W^{1,2}_p(\R^{1+N})$
has a nonempty resolvent set, cf.\ \cite[Corollary~2.6]{DLS}.
This fact easily implies  that
 there exists a constant $C_p^0>0$ with
\begin{equation}\label{parab-reg}
\begin{split}
\frac{1}{C^0_p}\,(\|(\Delta-D_s)u\|_p+\|u\|_p)
  &\le\|(\Div_x(a\nabla_x) -D_s)u\|_p +\|u\|_p \\
  &\le C_p^0\, (\|(\Delta-D_s)u\|_p+\|u\|_p)
\end{split}\end{equation}
for all $u\in W^{1,2}_p(\R^{1+N})$.

\begin{cor}
\label{coroll-3.6}
Let $1<p<+\infty$ and assume  that {\rm (A1)}--{\rm (A4)} hold.
We then have
\[ \|W^\frac12 \nabla_x u\|_p\le \eps\, \|\cL u\|_p + \frac{c}{\eps}\,\|Wu\|_p,
\]
for every  $\eps\in (0,\eps_0]$, $u\in \D_p$ and some
constants $c, \eps_0>0$ only depending on  $C_p^0$ $($see \eqref{parab-reg}$)$
and the  constants in {\rm (A1)}--{\rm (A4)}.
\end{cor}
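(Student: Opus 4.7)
The plan is to combine Proposition \ref{interpol1}, the parabolic regularity estimate \eqref{parab-reg}, and the pointwise bounds on $F$ and $V$ from (A3)--(A4), reducing everything to an absorption argument that fixes $\eps_0$ small enough.

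First, I would apply Proposition \ref{interpol1} to bound $\|W^{\frac12}\nabla_x u\|_p \le \eps \|(\Delta-D_s)u\|_p + (\alpha/\eps)\|Wu\|_p$ for any $\eps\in(0,1]$. Next, by \eqref{parab-reg},
\begin{equation*}
\|(\Delta-D_s)u\|_p \le C_p^0\bigl(\|(\Div_x(a\nabla_x)-D_s)u\|_p + \|u\|_p\bigr) - \|u\|_p,
\end{equation*}
and from the defining identity $\Div_x(a\nabla_x)u - D_s u = \cL u - F\cdot\nabla_x u + Vu$ I would obtain
\begin{equation*}
\|(\Div_x(a\nabla_x)-D_s)u\|_p \le \|\cL u\|_p + \|F\cdot\nabla_x u\|_p + \|Vu\|_p.
\end{equation*}
Using (A4) gives $\|F\cdot\nabla_x u\|_p \le \kappa\|W^{\frac12}\nabla_x u\|_p$, while (A3) together with $W\ge c_0>0$ yields $\|Vu\|_p\le c_1\|Wu\|_p$ and $\|u\|_p \le c_0^{-1}\|Wu\|_p$.

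Combining these gives, for every $\eps\in(0,1]$,
\begin{equation*}
\|W^{\frac12}\nabla_x u\|_p \le \eps C_p^0 \|\cL u\|_p + \eps C_p^0\kappa\,\|W^{\frac12}\nabla_x u\|_p + \Bigl(\eps C_p^0(c_1+c_0^{-1}) + \frac{\alpha}{\eps}\Bigr)\|Wu\|_p.
\end{equation*}
At this point the key step is the absorption: I set $\eps_0 := \min\{1,(2C_p^0\kappa)^{-1}\}$, so that for $\eps\in(0,\eps_0]$ the coefficient $\eps C_p^0\kappa$ is at most $\tfrac12$ and the gradient term on the right can be absorbed into the left-hand side. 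After multiplication by $2$ and using $\eps\le 1 \le 1/\eps$ to merge the $\eps$ and $1/\eps$ coefficients of $\|Wu\|_p$ into a single $c/\eps$, I obtain the required estimate with $c$ depending only on $C_p^0$, $\alpha$, $\kappa$, $c_1$ and $c_0$, hence only on $C_p^0$ and the constants in (A1)--(A4). Note that $u \in \D_p$ is admissible throughout because Proposition \ref{interpol1} applies on $\D_p$ and every term above is finite on this domain; the finiteness of $\|\cL u\|_p$ follows from $u\in W^{1,2}_p(\R^{1+N})$, the estimate $|F\cdot\nabla_x u|\le \kappa W^{\frac12}|\nabla_x u|$ combined with $\|W^{\frac12}\nabla_x u\|_p$ being already controlled, and $Vu\in L^p$ from (A3). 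The only genuinely delicate point is ensuring $\eps_0$ really depends only on the listed quantities, which is immediate from the formula $\eps_0 = \min\{1,(2C_p^0\kappa)^{-1}\}$.
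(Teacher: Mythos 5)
Your proof is correct and follows essentially the same route as the paper: apply Proposition~\ref{interpol1}, use \eqref{parab-reg} together with the identity $\Div_x(a\nabla_x)u - D_s u = \cL u - F\cdot\nabla_x u + Vu$, bound $\|F\cdot\nabla_x u\|_p$ by (A4) and $\|Vu\|_p$, $\|u\|_p$ by (A3) and $W\ge c_0$, and then absorb the $\|W^{1/2}\nabla_x u\|_p$ term for $\eps$ small. The paper carries this out more tersely (it keeps the $\|u\|_p$ term inside the single generic constant $c$ rather than separating it out), but the underlying absorption argument and the dependence of the constants are identical.
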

\begin{proof}

For all $u\in \D_p$ and $\eps\in (0,1]$, Proposition~\ref{interpol1} and
\eqref{parab-reg} imply that
\begin{align*}
\|W^\frac12 \nabla_x u\|_p
   &\le \eps\,\|(\Delta-D_s)u\|_p+\frac{\alpha}{\eps}\,\|Wu\|_p\\
   &\le c\eps\, \|\cL u - F\cdot \nabla_x u +Vu\|_p
               +\frac{c}{\eps}\,\|Wu\|_p\\
    &\le  c\eps\, \|\cL u\|_p +  c\eps\, \|W^\frac12 \nabla_x u\|_p
             +\frac{c}{\eps}\,\|Wu\|_p,
\end{align*}
where the constants $c$ only depend on the constants in
\eqref{parab-reg} and in (A1)--(A4). The assertion  follows
if we take a sufficiently  small $\eps>0$.
\end{proof}

We now come to the crucial \emph{a priori} estimate.

\begin{prop} \label{prop-dom-p}
Let $p\in (1,+\infty)$. Assume that the assumptions {\rm (A1)}--{\rm (A5)} and
the inequality
\begin{equation}\label{small}
\frac{\theta}{p} + (p-1)\Big(\frac{\beta+\gamma\kappa}{p}
    +\frac{\gamma^2M^2}{4}\Big)<1
\end{equation}
hold, where $M=\sup\{\|a(s,x)^\frac12\|: (s,x)\in\R^{1+N}\}$. Then, there exists
a  constant $C_p>0$
(only depending on $C_p^0$, $M$ and the constants in {\rm (A1)}--{\rm (A5)}) such that
\begin{equation}\label{apriori-est}
C_p^{-1}\,\|u\|_{\D_p} \le\|\cL u\|_p +\|u\|_p  \le C_p\, \|u\|_{\D_p},
\qquad\;\, u\in \D_p.
\end{equation}
\end{prop}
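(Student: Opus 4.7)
The \emph{upper} bound $\|\cL u\|_p+\|u\|_p\le C_p\|u\|_{\D_p}$ is routine: decompose $\cL u=-D_s u+\Div_x(a\nabla_x u)+F\cdot\nabla_x u-Vu$ and estimate the four pieces via (A1), Proposition~\ref{interpol1}$+$(A4), (A3), and the obvious bound on $D_s u$, using in particular that $\|W^{1/2}\nabla_x u\|_p\le \|(\Delta-D_s)u\|_p+\alpha\|Wu\|_p\le C\|u\|_{\D_p}$.

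For the lower bound, the decisive step is the a priori inequality $\|Wu\|_p\le C(\|\cL u\|_p+\|u\|_p)$, which by Lemma~\ref{density} I may verify for $u\in C_c^\infty(\R^{1+N})$. Setting $f=\cL u$, I test the equation against $|u|^{p-2}uW^{p-1}$ (when $p<2$, first regularised as $(u^2+\delta)^{(p-2)/2}u$, then $\delta\to 0$) and integrate by parts in $s$ and $x$. This yields the identity
\begin{align*}
\int V|u|^p W^{p-1}\,ds\,dx
&=\tfrac{p-1}{p}\int|u|^p W^{p-2}D_s W\,ds\,dx\\
&\quad-(p-1)\int\langle a\nabla_x u,\nabla_x u\rangle|u|^{p-2}W^{p-1}\,ds\,dx\\
&\quad-(p-1)\int\langle a\nabla_x u,\nabla_x W\rangle|u|^{p-2}u\,W^{p-2}\,ds\,dx\\
&\quad+\tfrac{1}{p}\int F\cdot\nabla_x(|u|^p)\,W^{p-1}\,ds\,dx-\int f|u|^{p-2}uW^{p-1}\,ds\,dx.
\end{align*}
By (A3) the left-hand side dominates $\|Wu\|_p^p$, and H\"older yields $\bigl|\int f|u|^{p-2}uW^{p-1}\bigr|\le\|f\|_p\|Wu\|_p^{p-1}$.

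Each remaining term I bring into the form ``constant times $\|Wu\|_p^p$ plus a lower-order term'' (l.o.t., meaning a contribution controllable by $\eta\|Wu\|_p^p+C(\eta)\|u\|_p^p$ thanks to $W\ge c_0>0$, so that the additive constants $K_\beta,K'_\gamma$ in (A2) are harmless). Hypothesis (A2) yields the prefactor $\tfrac{(p-1)\beta}{p}$ from the $D_sW$ integral. The Cauchy--Schwarz inequality for the quadratic form $a$ followed by Young's inequality with parameter $\delta=\tfrac{1}{2}$ absorbs the cross term $\int\langle a\nabla_x u,\nabla_x W\rangle\cdots$ into the negative definite integral $-(p-1)\int\langle a\nabla_x u,\nabla_x u\rangle|u|^{p-2}W^{p-1}$, leaving a residue bounded by $\tfrac{(p-1)M^2\gamma^2}{4}\|Wu\|_p^p$ via $\|a\|\le M^2$ and $|\nabla_x W|\le\gamma W^{3/2}+K'_\gamma$. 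Integration by parts in the $F$-term, combined with (A5) and (A4)$+$(A2), contributes $\tfrac{\theta}{p}\|Wu\|_p^p+\tfrac{(p-1)\kappa\gamma}{p}\|Wu\|_p^p$ up to l.o.t. Collecting everything and applying Young to $\|f\|_p\|Wu\|_p^{p-1}\le\eta\|Wu\|_p^p+C(\eta)\|f\|_p^p$, I obtain
\[
\|Wu\|_p^p\le\Bigl[\tfrac{\theta}{p}+(p-1)\Bigl(\tfrac{\beta+\kappa\gamma}{p}+\tfrac{M^2\gamma^2}{4}\Bigr)\Bigr]\|Wu\|_p^p+C\bigl(\|f\|_p^p+\|u\|_p^p\bigr).
\]
The square bracket is strictly less than $1$ by \eqref{small}, so absorption yields $\|Wu\|_p\le C(\|\cL u\|_p+\|u\|_p)$.

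Substituting this estimate into Corollary~\ref{coroll-3.6} gives $\|W^{1/2}\nabla_x u\|_p\le C(\|\cL u\|_p+\|u\|_p)$; hypotheses (A4) and (A3) then bound $\|F\cdot\nabla_x u\|_p$ and $\|Vu\|_p$ by the same right-hand side, whence $\|\Div_x(a\nabla_x u)-D_s u\|_p\le C(\|\cL u\|_p+\|u\|_p)$, and \eqref{parab-reg} promotes this to the full $W^{1,2}_p$-norm, completing \eqref{apriori-est}. The critical difficulty is the cross term produced by $\Div_x(a\nabla_x u)$: the choice of test function $|u|^{p-2}uW^{p-1}$ together with the borderline Young parameter $\delta=\tfrac{1}{2}$ is precisely what generates the sharp constant $\tfrac{(p-1)M^2\gamma^2}{4}$ appearing in \eqref{small}, and any cruder splitting would impose a strictly stronger smallness requirement on the oscillation parameter $\gamma$.
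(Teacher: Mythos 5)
Your proposal reproduces the paper's argument: the same test function $|u|^{p-2}uW^{p-1}$, the same integration by parts (with the $p<2$ case justified by regularization, which the paper delegates to \cite{MS}), the same Cauchy--Schwarz/Young treatment of the cross term producing the sharp residue $\tfrac{(p-1)M^2\gamma^2}{4}\|Wu\|_p^p$, and the same upgrade to the full $\D_p$-norm via Corollary~\ref{coroll-3.6}, (A4), (A3), and \eqref{parab-reg}. The only cosmetic difference is that you complete the square to absorb the cross term directly and push $K_\beta,K'_\gamma$ into lower-order terms, whereas the paper first shifts $W\mapsto W+\lambda$ to dispose of $K_\beta,K'_\gamma$ and then phrases the same estimate as positive-definiteness of the quadratic form in $A=\|Wu\|_p^{p/2}$ and $B=\bigl(\int|a^{1/2}\nabla_x u|^2|u|^{p-2}W^{p-1}\,ds\,dx\bigr)^{1/2}$.
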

\begin{proof}
We observe that the second estimate in \eqref{apriori-est}
follows from Proposition~\ref{interpol1}. Concerning the first estimate,
we can restrict ourselves to the case where $K_\beta=K'_\gamma=0$ in (A2).
Indeed, in the general  case it suffices to fix a large $\lambda>0 $ such that $W+\lambda$
satisfies (A2) with $K_\beta=K'_\gamma=0$. The established estimate
for  the operator $\cL-\lambda$ with the potential $V+\lambda$ then yields
\[ \|u\|_{\D_p} \leq c\, (\|\cL u-\lambda u\|_p+\|u\|_p)
  \le c\,(\|\cL u\| +\| u\|_p),
  \]
for some constants only depending on $C_p^0$, $M$ and the constants in
(A1)--(A5). Moreover, in view of Lemma~\ref{density}, it is enough to prove
the first inequality in \eqref{apriori-est} for test functions $u$ .

So, let us fix $u\in C^{\infty}_c(\R^{1+N})$. At first we take $p\in[2,+\infty)$.
We set $f:=-{\mathscr L}u$,  multiply this equality by the function
$W^{p-1}|u|^{p-2}u$ and integrate by parts over $\R^{1+N}$. We
then obtain (writing $\Div$ and $\nabla$ for, respectively, $\Div_x$ and $\nabla_x$)
\begin{align}
&\int_{\R^{1+N}}f u|u|^{p-2}W^{p-1}\,ds\,dx \label{longformula}\\
&= \frac{1}{p}\int_{\R^{1+N}} (D_s|u|^p)\,W^{p-1}\,ds\,dx   + \int_{\R^{1+N}}
 \langle a \nabla u, \nabla (u|u|^{p-2} W^{p-1})\rangle\,ds\,dx \notag \\
&\quad -\frac{1}{p}\int_{\R^{1+N}} F\cdot(\nabla |u|^p)\,W^{p-1}\,ds\,dx
        +\int_{\R^{1+N}} VW^{p-1} |u|^p\,ds\,dx\notag\\
&= \frac{1-p}{p} \int_{\R^{1+N}} |u|^p W^{p-2} D_sW\,ds\,dx
   + (p-1)\int_{\R^{1+N}}   \langle a \nabla u, \nabla u\rangle\,
         |u|^{p-2} W^{p-1} \,ds\,dx\notag\\
&\quad + (p-1)\int_{\R^{1+N}} \langle a \nabla u, \nabla W\rangle\,
         u |u|^{p-2} W^{p-2} \,ds\,dx\notag\\
 &\quad   + \frac{p-1}{p} \int_{\R^{1+N}} (F\cdot \nabla W) \, |u|^p W^{p-2}\,ds\,dx
 + \int_{\R^{1+N}} \Big(V+\frac{1}{p}\Div F\Big) W^{p-1} |u|^p\,ds\,dx.\notag
\end{align}
These equations are also valid if $p\in (1,2)$, but then
the  integration by parts needs  some justification given by \cite{MS}. From now
on we thus take  $p\in(1,+\infty)$.
Assumptions (A3) and (A5) further  yield
\begin{align} \label{aux-pot}
V+\frac{1}{p}\Div\,F\ge \Big(1-\frac{\theta}{p}\Big)W.
\end{align}
Formulas \eqref{longformula} and \eqref{aux-pot},
H\"older's inequality, and conditions (A2) and (A4) imply
\begin{align*}
\Big(1-\frac{\theta}{p}\Big) & \|W u\|^p_p
  + (p-1)\int_{\R^{1+N}} |a^\frac12 \nabla u|^2 |u|^{p-2} W^{p-1}\,ds\,dx\\
&\le \|f\|_p \,\|Wu\|_p^{p-1} + \frac{\beta (p-1)}{p} \,\|Wu\|^p_p
    +  \frac{\gamma \kappa(p-1)}{p}\,\|Wu\|^p_p\\
&\quad\; +  (p-1)\int_{\R^{1+N}} | a^\frac12 \nabla u|\,
    |a^\frac12 \nabla W| \,|u|^{p-1} W^{p-2}\,ds\,dx.
\end{align*}
Using again (A2) and H\"older's inequality,  the last summand can be
estimated by
\begin{align*}
 (&p-1)\gamma M\int_{\R^{1+N}}|a^\frac12 \nabla u|\,W^{p-\frac12}
            |u|^{p-1}\,ds\,dx\\
&\le (p-1)\gamma M \Big(\int_{\R^{1+N}} W^p |u|^p \,ds\,dx\Big)^\frac12
    \Big(\int_{\R^{1+N}} |a^\frac12 \nabla u|^2
     |u|^{p-2} W^{p-1}\,ds\,dx\Big)^\frac12\\
 &=: (p-1)\gamma M\,AB.
\end{align*}
By means of Young's inequality, we then deduce
\begin{equation}\label{ab-est}
\Big[1-\frac{\theta}{p} -\frac{p-1}{p} (\beta+\gamma\kappa)-\eps\Big] A^2
   + (p-1) B^2 - (p-1)\gamma M AB \le c(\eps) \|f\|_p^p,
\end{equation}
for each $\eps>0$ and some $c(\eps)>0$. Due to assumption
\eqref{small}, we can fix $\eps>0$ such that the left hand side of \eqref{ab-est}
is larger than  $\eta(A^2+B^2)$ for some $\eta>0$. So, we have shown that
\begin{equation}\label{full-apriori}
 \|Wu\|_p^p + \int_{\R^{1+N}} |a^\frac12 \nabla u|^2|u|^{p-2} W^{p-1}\,ds\,dx
  \le c\, \|f\|_p^p =c\,\|\cL u\|^p_p.
\end{equation}
Here and below
the constants $c$ only depend on $M$, $C_p^0$ (see \eqref{parab-reg}) and the constants in
(A1)--(A5).  Assumption (A4), Corollary~\ref{coroll-3.6} and
the  estimate \eqref{full-apriori} further yield
\begin{align*}
\|F\cdot \nabla u\|_p\le \kappa\, \|W^\frac12 \nabla u \|_p
    \le c\,(\|\cL u\|_p +  \|Wu\|_p) \le c\,\|\cL u\|_p\,.
\end{align*}
Using \eqref{parab-reg}, the last inequality, \eqref{full-apriori} and recalling that
$V\le c_1 W$, we get
\begin{align*}
\|u\|_{\D_p} \le c\,(\|\cL u -F\cdot \nabla u+ Vu\|_p + \|Wu\|_p)
                   \le  c\, \|\cL u\|_p\,,
 \end{align*}
 which is the remaining part of \eqref{apriori-est}.
\end{proof}

We now want to treat the inhomogeneous parabolic equation
\begin{align}\label{parab-eq}
D_su(s)= \Div_x(a(s)\nabla_x u(s))+F(s)\cdot \nabla_x u(s)
      -V(s)u(s) +f(s), \quad s\in \R,
\end{align}
on $\R^{N}$. For this purpose,
we define the operator $L_p u=\cL u$ with $D(L_p)=\D_p$ in
$L^p(\R^{1+N})$, where $1<p<+\infty$. In the next theorem
we identify $L^p(\R^{1+N})$ with $L^p(\R, L^p(\R^N))$ and we use
the following concepts. An \emph{evolution family} $G(s,r)$, $s\ge r$,
is a family
of bounded operators on a Banach space $X$ such that
 \[ G(t,s)G(s,r)=G(t,r), \quad G(s,s)=I, \quad (s,r)\mapsto G(s,r)
 \text{ is strongly continuous}, \]
 for $r,s,t\in\R$ with $t\ge s\ge r$.  The corresponding
\emph{evolution semigroup} on $L^p(\R,X)$ is given by
\[ (S(t)f)(s)=G(s,s-t) f(s-t),\]
 for $f\in L^p(\R^{1+N})$, $s\in\R$ and  $t\ge0$.
(See e.g.\ \cite{ChiLa} or \cite{Sch-survey}.)

\begin{thm}\label{Lp-thm}
Let $p\in(1,+\infty)$ and assume that conditions {\rm (A1)}--{\rm (A5)} and
\eqref{small} are satisfied. Then, the following assertions hold.
\begin{enumerate}[(a)]
\item
The operator $L_p$ generates a positive and
contractive evolution semigroup $S_p(\cdot)$ on $L^p(\R^{1+N})$
induced by an evolution family $G_p(s,r)$,
 $s\ge r$, of positive contractions on $L^p(\R^N)$.
\item
We set $u:= G_p(\cdot,r)\varphi$ for every $\varphi\in  L^p(\R^{N})$
and $r\in \R$. Then, $u\in W^{1,2}_p((a,b)\times \R^N)$,
$Vu\in L^p((a,b)\times\R^N)$ and $D_s u(s) = \A(s)u(s)$ for $s\in (a,b)$
and each interval $[a,b]\subset  (r,+\infty)$.
Moreover, for each $f\in L^p(\R^{1+N})$ there exists a unique $u\in \D_p$
satisfying \eqref{parab-eq}, namely
\[
u(s)=-L_p^{-1}f(s)=\int_{-\infty}^s G_p(s,r)f(r)\,dr,\qquad s\in\R.\]
\item
Let conditions {\rm (A5)} and \eqref{small} also hold
 for some $q\in (1,+\infty)$.  Then, $S_p(\cdot)$ and $S_q(\cdot)$ (resp. $G_p(\cdot,\cdot)$ and $G_q(\cdot,\cdot)$)
 coincide in $L^p(\R^{1+N})\cap L^q(\R^{1+N})$ (resp. in $L^p(\R^N)\cap L^q(\R^N)$).
 \end{enumerate}
\end{thm}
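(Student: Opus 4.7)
The plan is to verify the hypotheses of the Lumer--Phillips theorem for $L_p$ on $L^p(\R^{1+N})$, and then identify the resulting semigroup as the evolution semigroup associated to a suitable evolution family $G_p(s,r)$.

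\emph{Closedness, density, dissipativity.} The two-sided bound \eqref{apriori-est} of Proposition~\ref{prop-dom-p} shows that the graph norm of $L_p$ is equivalent to $\|\cdot\|_{\D_p}$, which is a complete norm; hence $L_p$ is closed, and it is clearly densely defined since $C_c^\infty(\R^{1+N})\subset\D_p$. For dissipativity, I would observe that assumptions (A3), (A5), and \eqref{small} give the pointwise inequality
\[V+\tfrac{1}{p}\Div_x F\;\ge\;\bigl(1-\tfrac{\theta}{p}\bigr)W\;\ge\;0,\]
so Lemma~\ref{dissip}(a) applies on $C_c^\infty(\R^{1+N})$; combining this with Lemma~\ref{density} and the closedness of $L_p$ extends dissipativity to all of $\D_p$.

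\emph{Surjectivity of $\lambda-L_p$.} This is the step I expect to be the main obstacle, because the unbounded potential $V$ and drift $F$ prevent a direct application of the resolvent of $\Div_x(a\nabla_x)-D_s$. The natural plan is an approximation scheme: choose truncations $V_n$ and $F_n$ of $V$ and $F$ which are bounded but still satisfy (A1)--(A5) and \eqref{small} with \emph{uniform} constants (e.g.\ truncating in a way that respects the relation $W\le V\le c_1W$ and the divergence inequality, possibly by truncating an auxiliary $W_n$ first and setting $V_n,F_n$ accordingly). The operator $L_p^{(n)}$ built from these truncations is a bounded perturbation of the generator $\Div_x(a\nabla_x)-D_s$ on $W^{1,2}_p(\R^{1+N})$, which generates a $C_0$-semigroup by \eqref{parab-reg}; since $L_p^{(n)}$ is additionally dissipative (by the same computation as above) it is a contraction generator. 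For $f\in L^p$ and $\lambda>0$, set $u_n:=(\lambda-L_p^{(n)})^{-1}f$. The uniform a priori estimate \eqref{apriori-est} applied to $L_p^{(n)}$ bounds $\|u_n\|_{\D_p}$ independently of $n$, and a weak compactness plus local regularity argument produces a limit $u\in\D_p$ with $(\lambda-L_p)u=f$. Together with the previous step, Lumer--Phillips then yields that $L_p$ generates a contraction $C_0$-semigroup $S_p(\cdot)$ on $L^p(\R^{1+N})$, and positivity is inherited from the positivity of the approximants $S_p^{(n)}(\cdot)$ (which is standard since $-V_n\le 0$ and the base parabolic semigroup is positive).

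\emph{Evolution semigroup structure and assertions (b), (c).} To see that $S_p(\cdot)$ is an evolution semigroup I would verify the translation--covariance $S_p(t)M_\phi=M_{\phi(\cdot-t)}S_p(t)$ for $\phi\in C^1_b(\R)$ (then extend by density); this follows from the commutation identity $L_p(M_\phi u)=M_\phi L_p u+(D_s\phi)u$ valid on $\D_p$ via Duhamel. The characterization in \cite{ChiLa, Sch-survey} then produces the evolution family $G_p(s,r)$ of positive contractions on $L^p(\R^N)$. The a priori estimate additionally gives $\|u\|_p\le c_0^{-1}\|Wu\|_p\le c\|L_pu\|_p$, so $0\in\rho(L_p)$, and $-L_p^{-1}f=\int_0^{+\infty}S_p(t)f\,dt$ translates via the evolution semigroup formula into the integral expression of (b); the $W^{1,2}_p((a,b)\times\R^N)$-regularity and pointwise PDE then follow by applying the a priori estimate to suitable cut-offs of $u=G_p(\cdot,r)\varphi$ together with standard local parabolic regularity. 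Finally, for (c), the resolvents $(\lambda-L_p)^{-1}$ and $(\lambda-L_q)^{-1}$ agree on the common core $C_c^\infty(\R^{1+N})$ because both equal the unique solution in $L^p\cap L^q$ of $(\lambda-\cL)u=f$ (distributional uniqueness); extending by density and applying Trotter--Kato yields the stated coincidence of the semigroups and evolution families on $L^p\cap L^q$.
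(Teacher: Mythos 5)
Your overall architecture coincides with the paper's: use Proposition~\ref{prop-dom-p} for closedness, Lemma~\ref{dissip} plus Lemma~\ref{density} for dissipativity, an approximation scheme for surjectivity of $\lambda-L_p$, Lumer--Phillips, and then the R\"abiger--Rhandi--Schnaubelt characterization to recover the evolution family. However, the surjectivity step is where you deviate, and your version has a genuine gap. You propose to truncate $F$ and $V$ simultaneously and assert that the resulting operator is ``a bounded perturbation of $\Div_x(a\nabla_x)-D_s$''; this is false, because even with $F_n$ bounded, $F_n\cdot\nabla_x$ is an unbounded first-order operator, so the bounded-perturbation theorem does not apply. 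Moreover, truncating $F$ while preserving the structural inequality in (A5), $\theta W+\Div_x F\ge0$, is delicate: cutting off $F$ introduces derivative terms of the cutoff into $\Div_x F_n$ that are not controlled by $W$. The paper deliberately avoids this: it first removes the drift by the method of continuity (applied to $\cL_\tau=\Div_x(a\nabla_x)+\tau F\cdot\nabla_x-V-D_s$, $\tau\in[0,1]$, using the $\tau$-uniform a priori estimate from Proposition~\ref{prop-dom-p} and the invertibility criterion of \cite[Theorem~5.2]{GT}), and only then approximates the potential $V$ alone by its Yosida approximations $V_\eps=V(1+\eps V)^{-1}$, which are genuinely bounded and preserve (A2)--(A3) with comparable constants. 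This decomposition is not merely cosmetic --- it is what makes the scheme go through.

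Two smaller points. First, the commutation identity should read $L_p(\phi u)=\phi L_p u-(D_s\phi)u$ (note the sign): since $\cL=\A(\cdot)-D_s$, one has $\cL(\phi u)=\phi\cL u-\phi'u$; with the $+$ sign you wrote, the RRS criterion would not be satisfied. Second, in part~(c) the appeal to ``distributional uniqueness'' of $(\lambda-\cL)u=f$ in $L^p\cap L^q$ is not justified; the difference $u_p-u_q$ need not belong to either domain $\D_p$ or $\D_q$, so the a priori estimates do not directly apply. The paper instead writes $L_p=A_p(\cdot)-D_s$ as a sum of two commuting-type generators, invokes the known $L^p$-consistency of the frozen-time semigroups $e^{tA_p(s)}$ from \cite{AMP,MPRS}, and then transfers consistency to $S_p(\cdot)$ via the Lie--Trotter product formula. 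Your positivity argument via positive approximants is a legitimate alternative to the paper's Lie--Trotter route, but again it only works once the approximation scheme itself is repaired. The plan for part~(b) (using a cutoff in time to show $\phi\,G_p(\cdot,r)\varphi\in\D_p$, and noting $0\in\rho(L_p)$ from the uniform lower bound $\|u\|_p\le c\|L_pu\|_p$) does match the paper's argument.
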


\begin{proof}
Being rather long, we split the proof in three steps.
\par
{\em Step 1}.
Due to Proposition~\ref{prop-dom-p}, the operator $L_p$ is closed on $\D_p$.
Hence, Lemma~\ref{density} implies that the space $C_c^\infty(\R^{1+N})$ is
a core for $L_p$.  The dissipativity of $L_p$ now follows from
Lemma~\ref{dissip}.  As a result,  $L_p$ generates a contraction semigroup
on $L^p(\R^{1+N})$ if $I-L_p$ is invertible, thanks to the Lumer--Phillips
 theorem (see e.g., \cite[Theorem II.3.15]{EN}).  We employ the operators
$\cL_\tau =\Div_x(a\nabla_x) +\tau F\cdot\nabla_x -V-D_s$
for  $\tau\in[0,1]$. Since these operators satisfy (A1)--(A5) and \eqref{small}
with the
same constants, Proposition~\ref{prop-dom-p} combined with the dissipativity
of $\cL_\tau$ yield that
\[ \|u\|_{\D_p}\le c\, (\|\cL_\tau u\|_p + \|u\|_p)
\le c\, (\|\cL_\tau u-u \|_p + \|u\|_p) \le c\, \|u-\cL_\tau u\|_p, \]
for every $u\in\D_p$,
with constants independent of $\tau\in[0,1]$. Hence, $I-L_p$ is invertible
if $I-\cL_0:\D_p\to L^p(\R^{1+N})$ is invertible, see e.g., \cite[Theorem~5.2]{GT}.
Observe that $\cL_0$ has no drift term.  We use the
Yosida approximations $ V_\eps= V (1+\eps V)^{-1}$ and
$W_\eps= W (1+\eps W)^{-1}$ of $V$ and $W$, respectively, where $\eps\in(0,1]$.
It is easy to check that the potential $W_\eps$ and the coefficients
of $\cL_{0,\eps} = \Div_x(a\nabla_x) +V_\eps -D_s$ also  satisfy (A1)--(A5)
and \eqref{small} with the same constants (except that
one has to replace $c_0$ by $c_0(1+c_0)^{-1}$). Moreover, $\cL_{0,\eps}$ with
domain $W^{1,2}_p(\R^{1+N})$ generates a contraction semigroup
on $L^p(\R^{1+N})$.
For every $f\in L^p(\R^{1+N})$ and $\eps\in(0,1]$,  we can thus define
$u_\eps=(I-\cL_{0,\eps})^{-1}f\in W^{1,2}_p(\R^{1+N})$, i.e.,
$u_\eps-\cL_{0,\eps} u_\eps=f$. From  the dissipativity of $\cL_{0,\eps}$ and
 Proposition~\ref{prop-dom-p} we deduce that $\|u_\eps\|_p\le \|f\|_p$ and
\[  \|W_\eps u_\eps\|_p+ \|u_\eps\|_{W^{1,2}_p(\R^{1+N})}
    \le c \,(\|\cL_{0,\eps} u_\eps\|_p +\|u_\eps\|_p)\le c\, \|f\|_p\,, \]
where the constants $c$ do not depend on $\eps\in(0,1]$. So, we find
a sequence $(u_{\eps_n})$ converging weakly to a function
$u\in  W^{1,2}_p(\R^{1+N})$. A subsequence converges  in $L^p_{\loc}(\R^{1+N})$,
so that we may assume that $u_{\eps_n}\to u$ a.e.\ in $\R^{1+N}$.
This fact implies that
$\|W u\|_p\le c\, \|f\|_p$, and hence $u\in \D_p$. Finally,
we can pass to the limit
(in the sense of distributions) in the equation $u_\eps-\cL_{0,\eps} u_\eps=f$,
obtaining $u-\cL_0 u=f$. Consequently, $I-\cL_0$ with domain $\D_p$ is
invertible so that $L_p$ generates a contraction semigroup $S(\cdot)$ on
$L^p(\R^{1+N})$.

Let us now check that $T_p(\cdot)$ is an evolution semigroup
and that the associated evolution operator $G_p(\cdot,\cdot)$ is contractive.
For this purpose. we begin by noting that $\D_p$ is a dense subset of $C_0(\R, L^p(\R^N))$
and $(I-L_p)^{-1}$ is continuous from $L^p(\R,L^p(\R^N))$ into $C_0(\R,L^p(\R^N))$.
Moreover,
\begin{equation*}
L_p(\varphi f)=\varphi L_pf-\varphi'f,
\end{equation*}
for all $f\in {\mathscr D}_p$ and $\varphi\in C^1_c(\R)$.
Theorem~3.4 of \cite{RRS} now yields the existence of an evolution family
$G_p(s,r)$, $s\ge r$, such that $(S_p(t)f)(s)=G_p(s,s-t) f(s-t)$ for
$f\in L^p(\R^{1+N})$, $s\in\R$, and $t\ge0$ (see also
\cite[Theorem~4.2]{Sch-survey} and the  references therein).
By \cite[Formula (3.3)]{RRS}, for all $s>r$ it holds that
$\|G_p(s,r)\|_{L(L^p(\R^N))}\le\|T_p(s-r)S_0(r-s)\|_{L(L^p(\R^{1+N}))}$,
where $S_0(\cdot)$ is the semigroup of left translations (i.e., $S_0(t)f=f(\cdot-t)$ for $t\in\R$ and
$f\in L^p(\R^{1+N})$). Since both $T_p(\cdot)$ and $S_0(\cdot)$ are contractive semigroups, the contractivity
of the operator $G_p(s,r)$  on $L^p(\R^N)$ for all $s\ge r$ follows at once.
\par
{\em Step 2}. By Step 1, the operator $\delta I-L_p$ is invertible for all
$\delta>0$. On the other hand,
for sufficiently small $\delta\in (0,c_0)$ also the operator $\cL+\delta I$
satisfies assumptions (A1)--(A5) and \eqref {small}
for the  potentials $V-\delta$ and  $W-\delta$, different constants
$c_0, c_1, K_\beta, K_\gamma'$ and slightly increased $\alpha,
\beta,\theta$ and $\kappa$. As a consequence,
also the operator $L_p$ is invertible, whence the second part of
assertion (b) follows. (Use \cite[p. 68]{ChiLa} for the formula
for $L_p^{-1}$.)

Let now fix $\varphi\in  L^p(\R^N)$, $r\in\R$, and
$[a,b]\subset (r,+\infty)$. Take a function $\phi \in C_c^\infty(\R)$
with $\phi\equiv 1$ on $[a,b]$ and support contained in  $(r,+\infty)$. Define
the function $u\in L^p(\R, L^p(\R^N))$
by $u(s)=\phi(s) G(s,r)\varphi $ for $s\ge r$ and $u(s)=0$ for $s<r$.
As in  \cite[p. 64]{ChiLa} one sees that  $u\in D(L_p)=\D_p$ and
 $L_p u(s)= -\phi'(s) G(s,r)\varphi$ for $s\ge r$.
So, we have established assertion (b).
\par
{\em Step 3.}
It remains to show part (c) and the asserted positivity in (a).
Theorem~3.4 of \cite{MPRS} states that
the operator  $A_p(s)=(\A(s), W^{2}_p(\R^N)\cap D(W(s)))$  generates
a contraction semigroup $(e^{tA_p(s)})_{t\ge0}$ on $L^p(\R^N)$
for each $s\in\R$. Moreover, $A_p(s)$  admits
$C_c^\infty(\R^N)$ as a core. This semigroup is positive because of Theorems~3.3
and 4.1 of \cite{AMP}, see also  \cite[Proposition~6.1]{MPRS}.
As in \cite[Paragraph~III.4.13]{EN}, one verifies that the multiplication operator
$A_p(\cdot)$ with maximal domain
\[D(A_p(\cdot))=\{u\in L^p(\R^{1+N}):u(s)\in D(A_p(s)) \text{ for a.e. }
     s\in\R,\ A_p(\cdot)u\in  L^p(\R^{1+N})\}\]
generates the  semigroup $M(\cdot)$
on $L^p(\R^{1+N})$ given by $M(t)f=e^{tA_p(\cdot)}f(\cdot)$,
which is positive and contractive. Moreover, the first derivative $-D_s$
with domain $W^{1}_p(\R, L^p(\R^{N}))$ generates the  positive contraction
semigroup $S_0(\cdot)$ on $L^p(\R^{1+N})$. Observe that
$D(A_p(\cdot))\cap D(-D_s)=\D_p$ and $L_p=A_p(\cdot)-D_s$.
Therefore, the Lie-Trotter product formula (see \cite[Corollary~III.5.8]{EN})
implies the positivity of $S(t)$, and thus of $G(s,r)$, for all $t\ge0$ and
$s\ge r$. The semigroups $M(\cdot)$ and  $S_0(\cdot)$  on $L^p(\R^{1+N})$
for different values of $p$ coincide on the intersections
of the $L^p$ spaces (see \cite[Theorem~3.3]{AMP} or \cite[Lemma~4.3]{MPRS}).
Hence, the  Lie--Trotter product formula further shows that
the respective evolution semigroups,
and thus the evolution families, coincide.
\end{proof}

In the following remark we indicate that
Theorem~\ref{Lp-thm} cannot be deduced from known
results in the autonomous case.

\begin{rem}\label{AT-rem}
Under the assumptions of Theorem~\ref{Lp-thm} we
define the operator $A(s)$ in $L^p(\R^N)$ by setting
$A(s)\varphi=\A(s)\varphi$
for $\varphi\in D(A(s)):=\{v\in W^2_p(\R^N): W(s)v\in L^p(\R^N)\}$,
$s\in\R$ and $p\in(1,+\infty)$.
Theorem~3.4 and Proposition~6.5 of \cite{MPRS} then state that the operators
$A(s)$  are sectorial and have  maximal $L^p$-regularity (with uniform
constants). We refer the reader to  \cite{KW}
for the concept of maximal $L^p$-regularity. In addition,
assume for a moment that the operators $A(s)$ satisfy
the Acquistapace--Terreni condition; i.e., that there are constants $L\ge0$ and $\mu,\nu\in (0,1]$ such that $\mu+\nu>1$ and
\begin{align}\label{at}
\|\lambda^\nu A(t)(\lambda-A(t))^{-1}(A(t)^{-1}-A(s)^{-1})\|
  \le L\,|t-s|^\mu
\end{align}
holds for all $\lambda>0$ and $t,s\in\R$, see \cite{acqui,AT}.
Corollary~2.6 in \cite{DLS} then implies that for some
$\omega\ge0$ the operator $A(\cdot)-D_s-\omega I$ with domain $\D_p$
is invertible in $L^p(\R^{1+N})$. We point out that this fact is the
crucial point  of the proof of Theorem~\ref{Lp-thm}. However, the
Acquistapace--Terreni condition does not follow from the assumptions
of Theorem~\ref{Lp-thm}, as we now show by a simple example.

Let $a=I$, $F=0$, $N=1$, $p=2$, and set $W(s,x)=V(s,x)=\exp(\exp(s+x))$
for $(s,x)\in\R^2$. It is easy to check that the assumptions
(A1)--(A5) and \eqref{small} hold in this case.
On the other hand, if \eqref{at} were true,
then $D(A(0))=W^2_2(\R^2)\cap D(V(0))$ would be contained in the real
interpolation space $(X, D(A(s)))_{\nu,\infty}$ which is embedded
into $D(V(s)^\alpha)$ for all $s\in\R$ and $\alpha\in (0,\nu)$.
(See e.g.\ \cite{lunardi} for basic facts on interpolation theory.).
Given such an $\alpha\in(0,\nu)$ take $s>0$ such that $\alpha e^s=2$.
Choose a function $\chi\in C^2(\R)$ which vanishes on $\R_-$ and is equal
to 1 on $[1,+\infty)$. Set $v(x)=\chi(x) \exp(-\frac32 e^x)$ for $x\in\R$.
It is straightforward to verify that $v\in D(A(0))$ but
$v\notin D(V(s)^\alpha)$, so that \eqref{at} has to be violated
in this example.
\end{rem}

In order to apply  Theorem~\ref{Lp-thm} to the parabolic Ornstein--Uhlenbeck operator
$\G_O$, we have to study the mapping properties of the isomorphism
$M_p:L^p(\R^{1+N})\to L^p(\R^{1+N},\nu)$,  see  \eqref{Mp}, on the space
\[\D_{p,O}=\{u\in W^{1,2}_p(\R^{1+N}): |x|^2 u\in L^p(\R^{1+N})\},\]
endowed with the norm $\|u\|_{\D_{p,O}}=\|u\|_{W^{1,2}_p(\R^{1+N})}+\|\,|x|^2u\|_p$,
i.e., the space $\D_p$ for  the potential $W_O(x)=c_0+k_1\,|x|^2$
from \eqref{VF-cond}.

\begin{lem}\label{Mp-lem}
Assume that Hypothesis $\ref{hyp1}$ holds and let $p\in (1,+\infty)$.
 Then, the map $M_p$ defined in \eqref{Mp} induces an
 isomorphism from $\D_{p,O}$ onto $W^{1,2}_p(\R^{1+N},\nu)$.
\end{lem}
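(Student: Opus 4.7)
The map $M_p$ is already known to be an isomorphism of $L^p(\R^{1+N})$ onto $L^p(\R^{1+N},\nu)$ from the paragraph preceding \eqref{Mp}, so the task is to verify that under $M_p$ the domains $\D_{p,O}$ and $W^{1,2}_p(\R^{1+N},\nu)$ correspond with equivalent norms. The plan is to exploit the change-of-unknown formulas \eqref{change-unknown} together with the polynomial bounds for the derivatives of $\Phi$ coming from Lemma~\ref{prop-1}: by \eqref{D-Phi}, $\|D_x^2\Phi(s,\cdot)\|$ is uniformly bounded and $|\nabla_x\Phi(s,x)|\le C|x|$, and by \eqref{Ds-Phi} and Lemma~\ref{prop-1}, $|D_s\Phi(s,x)|\le C(1+|x|^2)$. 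The central interpolation tool will be Proposition~\ref{interpol1} applied with $W=W_O=c_0+k_1|x|^2$, which bounds $\||x|\,|\nabla_x u|\|_p$ by $\varepsilon\|(\Delta-D_s)u\|_p+(C/\varepsilon)\|(1+|x|^2)u\|_p$ for $u\in\D_{p,O}$.

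For the forward bound I would substitute the three identities in \eqref{change-unknown} into the $L^p(\nu)$-norms of $D_s(M_pu)$, $D_i(M_pu)$, $D_{ij}(M_pu)$. Since $\|M_pf\|_{L^p(\nu)}\asymp\|f\|_p$, each resulting term reduces to an $L^p$-norm of an expression of the form $(D_\star\Phi)^\bullet u$ or $(D_\star\Phi)\,D_\star u$. The polynomial bounds on $\nabla_x\Phi,D_x^2\Phi,D_s\Phi$, Proposition~\ref{interpol1} for the factor $|x|\,|\nabla_x u|$, and the elementary bound $\||x|u\|_p\le\|u\|_p^{1/2}\||x|^2u\|_p^{1/2}$ together yield $\|M_pu\|_{W^{1,2}_p(\R^{1+N},\nu)}\le C\|u\|_{\D_{p,O}}$.

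For the backward bound, set $v=M_pu$ and read \eqref{change-unknown} in the opposite direction: since $u=M_p^{-1}v$, the same identities hold with $\frac1p$ replaced by $-\frac1p$, giving $D_su=M_p^{-1}(D_sv)-\frac1p u\,D_s\Phi$, $D_iu=M_p^{-1}(D_iv)-\frac1p u\,D_i\Phi$, and the analogous expressions for $D_{ij}u$. Contracting the second-order identities with $q_{ij}(s)$ isolates the quadratic term $\frac{1}{p^2}u\,\langle Q(s)Q_s^{-1}x,Q_s^{-1}x\rangle$, which by Hypothesis~\ref{hyp1}(ii) and Lemma~\ref{prop-1} is bounded below by $c|x|^2|u|$, yielding
\[
\||x|^2u\|_p\le c_1\bigl(\|D^2v\|_{L^p(\nu)}+\|D^2u\|_p+\|u\|_p+\||x|\,|\nabla_xu|\|_p\bigr).
\]
Similarly, $\|D_su\|_p,\|D_iu\|_p$ and the individual $\|D_{ij}u\|_p$ are bounded by the corresponding $L^p(\nu)$-norms of $v$ plus weighted polynomial terms in $u$. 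Inserting Proposition~\ref{interpol1} with a small parameter then absorbs $\||x|\,|\nabla_xu|\|_p$ and the extra $\||x|^2u\|_p$ on the right, closing the chain of estimates and producing $\|u\|_{\D_{p,O}}\le C\|v\|_{W^{1,2}_p(\R^{1+N},\nu)}$. Density of $C_c^\infty(\R^{1+N})$ in $\D_{p,O}$ (Lemma~\ref{density}) and in $W^{1,2}_p(\R^{1+N},\nu)$, established by the same cutoff-plus-mollification argument using the smoothness of the density of $\nu$, legitimizes these formal manipulations. The main obstacle is the backward inequality, since the second-order identities couple $\|D^2u\|_p$, $\||x|^2u\|_p$ and $\||x|\,|\nabla_xu|\|_p$; disentangling them requires both the ellipticity of $Q(s)$ (to produce the lower bound on the quadratic term) and Proposition~\ref{interpol1} with a sufficiently small parameter to absorb the weighted gradient on the right-hand side.
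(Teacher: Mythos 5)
Your treatment of the forward bound matches the paper: it, too, differentiates through the conjugation formulas \eqref{change-unknown}, uses the polynomial bounds \eqref{D-Phi}, \eqref{Ds-Phi} together with Lemma~\ref{prop-1}, and invokes Corollary~\ref{coroll-3.6} (i.e.\ Proposition~\ref{interpol1} for $W_O$) to dispose of the weighted gradient term. No issues there.

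The backward bound, however, has a genuine gap. You propose to extract the quadratic weight from the contracted second-order identity and then ``close the chain'' by applying Proposition~\ref{interpol1} with a small parameter. But Proposition~\ref{interpol1} reads $\|W^{1/2}\nabla_x u\|_p \le \eps\,\|(\Delta-D_s)u\|_p + \frac{\alpha}{\eps}\,\|Wu\|_p$, so after inserting it into your estimate $\||x|^2u\|_p \le c_1(\|D^2v\|_{L^p(\nu)}+\|D^2u\|_p+\|u\|_p+\||x|\,|\nabla_xu|\|_p)$, the right-hand side acquires the term $\frac{c_1\alpha}{\eps}\,\||x|^2u\|_p$. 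Choosing $\eps$ \emph{small}, as you suggest, makes this coefficient \emph{large} and the absorption fails; choosing $\eps$ large enough to absorb is forbidden since $\eps$ is capped by $\eps_0\le1$, and there is no reason the various constants ($\alpha$, $c_1$, $\|Q\|_\infty$, $\eta_0$, $C_1$, $C_2$, $k_1$) conspire to make $c_1\alpha<\eps_0$. The same difficulty recurs when you bound $\|D^2u\|_p$ and $\|D_su\|_p$ by $v$-quantities: the reversed formulas reintroduce $|x|^2u$ and $|x|\nabla_xu$ with coefficients that prevent closing. Proposition~\ref{interpol1} is an estimate with respect to \emph{Lebesgue} measure and cannot substitute for a Gaussian-weighted inequality. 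The paper avoids this entirely: the key step is inequality \eqref{incl-2}, whose second part rests on the Gaussian Poincar\'e--Hardy estimate from \cite[Lemma~7.1]{MPRS},
\[
\int_{\R^N} |v|^p\, |Q_s^{-1}x|^p\, e^{-\frac12\langle Q_s^{-1}x,x\rangle}\,dx
  \le c\int_{\R^N}(|v|^p+|\nabla_x v|^p)\,e^{-\frac12\langle Q_s^{-1}x,x\rangle}\,dx,
\]
integrated in $s$ via Lemma~\ref{prop-1}. Applying \eqref{incl-2} to $v$, $D_iv$ and $x_iv$ then yields $\|M_p^{-1}v\|_{\D_{p,O}}\le c\|v\|_{W^{1,2}_p(\R^{1+N},\nu)}$ directly, without any absorption of critical-size weighted terms. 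That lemma (or an equivalent Gaussian-weighted gradient bound) is exactly the ingredient your argument is missing.
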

\begin{proof}
We have to prove that the restrictions $M_p:\D_{p,O}\to
W^{1,2}_p(\R^{1+N},\nu)$ and $M_p^{-1}: W^{1,2}_p(\R^{1+N},\nu)\to\D_{p,O}$
are well-defined and  continuous. Concerning $M_p$, it suffices to show
\begin{align}\label{mp-est}
\|M_p u\|_{W^{1,2}_p(\R^{1+N},\nu)}
  \le c\, (\|u\|_{W^{1,2}_p(\R^{1+N})} + \|\,|x|^2 \,u\|_p),
\end{align}
for a constant $c$ and all $u\in C_c^\infty(¸\R^{1+N})$,
because of Lemma~\ref{density}. We further recall that the norm
of the functions $|x|\, |\nabla_x u|$ in $L^p(\R^{1+N})$ can be
controlled by the norm of $u$ in $\D_{p,O}$, due to Corollary \ref{coroll-3.6}.
The formulas \eqref{change-unknown},  \eqref{D-Phi} and \eqref{Ds-Phi}
combined with Lemma~\ref{prop-1} now easily imply \eqref{mp-est}.

To establish the continuity of the operator $M_p^{-1}:W^{1,2}_p(\R^{1+N},\nu)\to\D_{p,O}$
we first note that the space $C^{\infty}_c(\R^{1+N})$ is dense in $W^{1,2}_p(\R^{1+N},\nu)$. This fact can
be shown as in Lemma \ref{density}. It remains to prove that
\begin{align}\label{mp-inv-est}
\|M_p^{-1} v\|_{W^{1,2}_p(\R^{1+N})}  + \|\,|x|^2 \,M_p^{-1} v\|_p
  \le c\, \|v\|_{W^{1,2}_p(\R^{1+N},\nu)},
\end{align}
for a constant $c$ and all $v\in C_c^\infty(\R^{1+N})$.
For the derivatives of $u:=M^{-1}_p v$ one can
obtain expressions similar to those in \eqref{change-unknown}.
Hence,  we have to
dominate the norms in $L^p(\R^{1+N})$ of the functions $|x| u$, $|x|^2 u$ and
$|x|\, |M_p^{-1}(D_i v)|$ by $\|v\|_{W^{1,2}_p(\R^{1+N},\nu)}$.
We  prove below that there exists $c>0$ such that
\begin{equation}\label{incl-2}
\begin{split}
\intrn |x|^p |M_p^{-1} v|^p\,ds\,dx &\le c \intrn |x|^p |v|^p\, d\nu\\
  &\le c\sum_{j=1}^N\intrn |D_j v|^p\,d\nu + c\intrn |v|^p\,d\nu.
\end{split}\end{equation}
After \eqref{incl-2} has been shown, we can apply
this inequality   also to the functions $D_i v$ and $x_i v$,
where $i=1,\ldots,N$. In this way we derive \eqref{mp-inv-est}.

To show  \eqref{incl-2},  let $v$ be a test function.
At first,  Lemma~\ref{prop-1} yields
\begin{align*}
\intrn |x|^p |M_p^{-1} v|^p\,ds\,dx
&= (2\pi)^\frac{N}{2} \intrn |x|^p |v|^p (\det Q_s)^\frac12 \,d\nu  \\
&\le (2\pi C_2)^\frac{N}{2} \intrn |x|^p |v|^p \,d\nu .
\end{align*}
To check the second part of \eqref{incl-2}, we first deduce from
Lemma~\ref{prop-1} the estimates
\begin{align*}
\intrn |x|^p|v|^p\,d\nu
&\le  (2\pi C_1)^{- \frac{N}{2}} \intrn |x|^p|v|^p\,
         e^{-\frac{1}{2}\langle Q_s^{-1}x,x\rangle}\,ds\, dx \notag \\
&\le \frac{C_2^p}{(2\pi C_1)^{\frac{N}{2}}} \intrn |v|^p
 |Q_s^{-1}x|^p \,e^{-\frac{1}{2}\langle Q_s^{-1}x,x\rangle}\,ds\,dx.
\end{align*}
On the other hand, \cite[Lemma~7.1]{MPRS} implies that
\[ \int_{\R^N}\! |v(s,x)|^p |Q_s^{-1}x|^p
   e^{-\frac{1}{2}\langle Q_s^{-1}x,x\rangle}dx
  \le c \int_{\R^N}\! (|v(s,x)|^p+ |\nabla_x v(s,x)|^p)
    e^{-\frac{1}{2}\langle Q_s^{-1}x,x\rangle}dx,
    \]
for a constant $c>0$ and every $s\in\R$. We integrate this inequality
with respect to $s\in\R$ and use once more Lemma~\ref{prop-1}.
As a result, \eqref{incl-2} holds.
\end{proof}

We come now to our second main result which describes the domain
of the parabolic Ornstein-Uhlenbeck operator $\G=\A_O(\cdot)-D_s$
in the Lebesgue space
with the family of invariant measures. This fact has immediate
consequences on the regularity properties of the equation \eqref{ou-line}.

\begin{thm}\label{ou-thm}
Let $p\in(1,+\infty)$ and assume that Hypothesis $\ref{hyp1}$ holds.
 Then, the operator $G_p=(\G,W^{1,2}_p(\R^{1+N},\nu))$ generates a positive
contraction semigroup $T(\cdot)$ on $L^p(\R^{1+N})$. This semigroup
is given by $(T(t)f)(s)=G_O(s,s-t) f(s-t)$ for $f\in L^p(\R^{1+N},\nu)$,
 $s\in\R$, $t\ge0$, and the positive and contractive
 evolution family $G_O(s,r)$, $s\ge r,$ solving
 \eqref{pb-3}.  Further,
$u:= G_O(\cdot,r)\varphi\in W^{1,2}_p((a,b)\times \R^N, \nu)$
and $D_t u(s) = \A_O(s)u(s)$ for every $\varphi\in  L^p(\R^{N})$, $r\in \R$,
$[a,b]\subset  (r,+\infty)$ and $s\in (a,b)$.
 Finally, for each
$f\in L^p(\R^{1+N},\nu)$ there exists a unique $u\in D(G_p)$ satisfying
\eqref{ou-line}, namely $u:=(I-G_p)^{-1}f$.
\end{thm}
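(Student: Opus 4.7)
The plan is to transfer Theorem~\ref{Lp-thm} from $\cL_O$ on $L^p(\R^{1+N})$ to $\G$ on $L^p(\R^{1+N},\nu)$ through the isomorphism $M_p$ of Lemma~\ref{Mp-lem}. By~\eqref{VF-cond}, the coefficients $Q$, $F_O$, $V_O+\lambda$, $W_O$ of the shifted operator $\cL_O-\lambda I$ satisfy (A1)--(A5) with $\beta=K_\beta=0$ and with $\gamma>0$ that may be chosen arbitrarily small; since $\theta=2/3<p$, condition~\eqref{small} holds for every $p\in(1,+\infty)$. Theorem~\ref{Lp-thm} applied to $\cL_O-\lambda I$ therefore shows that the realization $L_{p,O}$ of $\cL_O$ with domain $\D_{p,O}$ generates a positive semigroup $\hat S_p(\cdot)$ on $L^p(\R^{1+N})$ induced by a positive evolution family $\hat G_p(s,r)$ on $L^p(\R^N)$, and it yields the regularity and solvability assertions of Theorem~\ref{Lp-thm}(b).

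Next I would set $\tilde G_p:=M_p L_{p,O} M_p^{-1}$. By Lemma~\ref{Mp-lem} its domain is $M_p\D_{p,O}=W^{1,2}_p(\R^{1+N},\nu)$, and being similar to $L_{p,O}$ it generates the $C_0$-semigroup $\tilde T(\cdot):=M_p\hat S_p(\cdot)M_p^{-1}$ on $L^p(\R^{1+N},\nu)$. Positivity of $\tilde T(\cdot)$ follows from positivity of $\hat S_p(\cdot)$ since $M_p$ is multiplication by a strictly positive function, while a direct computation using \eqref{change-unknown} confirms $\tilde G_p u=\G u$ for every $u\in C^\infty_c(\R^{1+N})$.

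The crucial step is the identification $\tilde G_p=G_p$, with $G_p$ the generator of the evolution semigroup $T(\cdot)$ from \eqref{sem-T(t)}. As recalled in the introduction (citing \cite{LZ}), $G_p$ is the closure of $(\G,C^\infty_c(\R^{1+N}))$ in $L^p(\R^{1+N},\nu)$; thus $\tilde G_p$, being a closed extension of this restriction, contains $G_p$. Both being generators of $C_0$-semigroups---$T(\cdot)$ is even contractive, by Jensen's inequality combined with the invariance~\eqref{inv-def}---a standard resolvent comparison at any common point of their resolvent sets forces $\tilde G_p=G_p$. This identifies $D(G_p)=W^{1,2}_p(\R^{1+N},\nu)$, yields the positivity and contractivity of $T(\cdot)$, and makes available the representation \eqref{sem-T(t)} in terms of $G_O$.

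For the remaining regularity and solvability statements I would exploit the pointwise formula $G_O(s,r)\varphi=e^{\Phi(s,\cdot)/p}\hat G_p(s,r)(e^{-\Phi(r,\cdot)/p}\varphi)$ obtained from $T=\tilde T=M_p\hat S_p M_p^{-1}$ by comparing the associated evolution families. For $\varphi\in L^p(\R^N)$ and $r\in\R$, the function $\psi:=e^{-\Phi(r,\cdot)/p}\varphi$ belongs to $L^p(\R^N)$ since $e^{-\Phi(r,\cdot)/p}$ is bounded, so Theorem~\ref{Lp-thm}(b) gives $\hat u:=\hat G_p(\cdot,r)\psi\in W^{1,2}_p((a,b)\times\R^N)$ with $W_O\hat u\in L^p$ and $D_s\hat u=\cL_O\hat u$. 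A localized version of Lemma~\ref{Mp-lem} then transfers this to $u=M_p\hat u=G_O(\cdot,r)\varphi$, yielding $u\in W^{1,2}_p((a,b)\times\R^N,\nu)$ and $D_s u=\A_O(s)u$; unique solvability of~\eqref{ou-line} follows immediately from $1\in\rho(G_p)$. The main obstacle I foresee is precisely this localized transfer, since Lemma~\ref{Mp-lem} is stated globally; however, the explicit identity $u=e^{\Phi/p}\hat u$ together with the smoothness of $\Phi$ and the pointwise bounds of Lemma~\ref{prop-1} should allow the argument of Lemma~\ref{Mp-lem} to be carried out on strips $(a,b)\times\R^N$ without essential modification.
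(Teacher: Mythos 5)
Your proposal is correct and follows essentially the same route as the paper's proof: apply Theorem~\ref{Lp-thm} to $\cL_O-\lambda I$ (the constants in \eqref{VF-cond} make (A1)--(A5) and \eqref{small} hold for every $p\in(1,+\infty)$), conjugate by the isomorphism $M_p$ of Lemma~\ref{Mp-lem}, and identify the conjugated operator with the generator of the evolution semigroup $T(\cdot)$ via the result of \cite{LZ} that $C^\infty_c(\R^{1+N})$ is a core.

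One remark on your final paragraph. The obstacle you anticipate --- needing a strip version of Lemma~\ref{Mp-lem} on $(a,b)\times\R^N$ --- does not actually arise. Rather than first producing $\hat u=\hat G_p(\cdot,r)\psi$ and then transferring locally, argue exactly as in Step~2 of the proof of Theorem~\ref{Lp-thm}: pick $\phi\in C^\infty_c(\R)$ with $\phi\equiv1$ on $[a,b]$ and $\supp\phi\subset(r,+\infty)$, and set $u(s)=\phi(s)\,G_O(s,r)\varphi$ for $s\ge r$, $u(s)=0$ otherwise. The argument from \cite{ChiLa} shows $u\in D(G_p)$, and since you have already established $D(G_p)=W^{1,2}_p(\R^{1+N},\nu)$, restricting $u$ to $(a,b)\times\R^N$ gives precisely the asserted regularity of $G_O(\cdot,r)\varphi$; the global Lemma~\ref{Mp-lem} suffices and no localized transfer is needed. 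A smaller stylistic difference: for the identification of $(\G,W^{1,2}_p(\R^{1+N},\nu))$ with the generator of $T(\cdot)$, the paper simply notes that $C^\infty_c$ is dense in $W^{1,2}_p(\R^{1+N},\nu)$, hence a core for the conjugated operator, which must therefore coincide with the closure of $(\G,C^\infty_c)$ from \cite{LZ}; your resolvent-comparison argument proves the same thing and is equally standard.
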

\begin{proof}
We can apply Theorem~\ref{Lp-thm} to the operator $\cL_O-\lambda I$,
see \eqref{LO} and \eqref{VF-cond}.
Theorem~\ref{Lp-thm} and Lemma~\ref{Mp-lem} thus imply that the
operator $G_p-\lambda I$ with domain
$D(G_p)=M_p(\D_p)=W^{1,2}_p(\R^{1+N},\nu)$  generates
a positive semigroup on $L^p(\R^{1+N},\nu)$. Moreover, $G_p$ extends
the operator $\G$ defined on test functions. As mentioned in the proof of
Lemma~\ref{Mp-lem}, test functions are dense in $W^{1,2}_p(\R^{1+N},\nu)$
and thus they are a core for $G_p$. In view of \cite{LZ}, $G_p$ then generates the
evolution semigroup $T(\cdot)$ corresponding to $G_O$, as described in the
introduction. (Note that Hypothesis 1.1(iv) in \cite{LZ} is needed only to
guarantee the continuity of the function $G(s,r)f$ with respect to $r$, when $f\in C_b(\R^N)$ and
$G(s,r)$ is the evolution operator
associated with the class of nonautonomous Kolmogorov operators therein considered;
in our situation that assumption is not needed since the continuity of the function $G_O(s,r)f$ with respect to
$r$ is clear since we have an explicit formula for this function, see \cite{dpl}.)
 This semigroup is contractive. The remaining assertions can be
shown as in Theorem~\ref{Lp-thm}.
\end{proof}

\section{Operators with dominating potential for $p=1,+\infty$.}
\label{sect-4}

In this section we extend Theorem~\ref{Lp-thm} to the borderline cases
$p=1,+\infty$. We set $L_1=(\cL,\D_1)$ on $L^1(\R^{1+N})=L^1(\R,L^1(\R^N))$
and $L_\infty=(\cL,\D_\infty)$ on $C_0(\R^{1+N})=C_0(\R, C_0(\R^N))$.
 Note that in these cases we cannot expect to replace
$\D_1$ and $\D_\infty$ with the intersection $W^{1,2}_p(\R^{1+N})\cap D(W)$ and
$W^{1,2}_{\infty}(\R^{1+N})\cap D(W)$, respectively.
To avoid some technical problems,  we restrict ourselves to the case of the
Laplacian, where $a(s)=I$ for all $s\in\R$.

We need in the next proof some properties
of the operator $\Delta-D_s$ on $L^1(\R,L^1(\R^N))$. Consider the semigroup
$G(\cdot)$ generated by the Laplacian on $L^1(\R^N)$ and let
$(V(t)f)(s) =G(t)f(s-t)$ on $L^1(\R,L^1(\R^N))$
be the induced evolution semigroup, which is positive
and contractive. The generator $H$ of the semigroup $V(\cdot)$
is the closure of $\Delta-D_s$ defined on the intersection of the domains
of $\Delta$ and of $D_s$ in $L^1(\R,L^1(\R^N))$, see e.g.,\
\cite[Remark~2.35]{ChiLa}. The semigroup $V(\cdot)$ leaves invariant
the Schwartz space of rapidly decreasing functions
$f:\R^{1+N}\to\R$ which, thus, is a core of $H$.
In view of Lemma~\ref{density}, it follows that
\[D(H)= \{u\in L^1(\R,L^1(\R^N)): (\Delta-D_s)u\in L^1(\R,L^1(\R^N))\}
=:D(\Delta-D_s).\]
We further have
\[(I-H)^{-1}f(t)= \int_{-\infty}^t e^{s-t} G(t-s)f(s)\,ds,\]
for all $t\in\R$ and $f\in L^1(\R,L^1(\R^N))$, and hence
\[ D(H) \hookrightarrow W^{\rho-\sigma}_1(\R,W^{2\sigma}_1(\R^N)),\]
for $\frac{1}{2}<\sigma<\rho<1$ and the usual Slobodecki\u{\i} spaces.
This fact follows for bounded time intervals from \cite[Theorem 19]{DB84} and \cite[Theorem 4]{DB89}.
The extension to
the time interval $\R$ can be done as in \cite[Chapter 4]{lunardi}.
Let $a<b$ and $R>0$. Due to Sobolev's embedding theorem the space
$W^{2\sigma}_1(B(0,R))$
is embedded into $W^{1}_p(B(0,R))$ for some $p>1$. Hence,
$W^{2\sigma}_1(B(0,R))$ is compactly embedded into
$L^p(B(0,R))\hookrightarrow L^1(B(0,R))$. Corollary~2 in
\cite{Si} now implies that
$\overset{\circ}{W}{}^{\rho-\sigma}_1((a,b),W^{2\sigma}_1(B(0,R)))$
is compactly embedded into $L^1((a,b)\times B(0,R)) $.

\begin{thm}\label{L1-thm}
Assume that $a=I$ and that conditions {\rm (A2)}--{\rm (A5)} are satisfied for some
$\beta,\gamma >0$, $\theta <1$ and $p=1$.  Then, the following assertions hold.
\begin{enumerate}[(a)]
\item
The operator $L_1$ generates a positive and
contractive evolution semigroup $S(\cdot)$ on $L^1(\R^{1+N})$
induced by an evolution family $G(s,r)$,
 $s\ge r$, of positive contractions on $L^1(\R^N)$.
\item
We set $u:= G(\cdot,r)\varphi$ for every  $\varphi\in  L^1(\R^{N})$
and  $r\in \R$. Then, the functions
$(\Delta-D_s)u$ and $Vu$ belong to $L^1((a,b)\times\R^N)$ and
$D_s u(s) = \A(s)u(s)$ for all $s\in (a,b)$ and each interval
$[a,b]\subset  (r,+\infty)$.
Moreover, for each $f\in L^1(\R^{1+N})$ there exists a unique function $u\in \D_1$
satisfying \eqref{parab-eq}, namely
\[u(s)=-L_p^{-1}f(s)=\int_{-\infty}^s G(s,r)f(r)\,dr,\qquad s\in\R.\]
\item
 In addition, assume that condition \eqref{small} holds for some
$p\in (1,+\infty)$.   Then, the evolution semigroups and evolution
 operators obtained in the present
theorem and in Theorem $\ref{Lp-thm}$ coincide on the intersection of the
$L^1$- and $L^p$-spaces.
\end{enumerate}
\end{thm}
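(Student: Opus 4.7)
The strategy is to adapt the proof of Theorem~\ref{Lp-thm} to the $L^1$-setting. Since Proposition~\ref{prop-dom-p} is unavailable at $p=1$, I would replace the $L^p$-a priori estimate by a direct $L^1$-estimate of Kato type, and use the $L^p$-resolvent from Theorem~\ref{Lp-thm} for $p$ slightly above $1$ as a construction device for the $L^1$-resolvent equation.

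The first step is the dissipativity of $(L_1,C_c^\infty(\R^{1+N}))$, which follows from Lemma~\ref{dissip}(a) since (A3), (A5) and $\theta<1$ imply $V+\Div_x F\ge(1-\theta)W\ge 0$. The central task is the $L^1$ a priori estimate $\|u\|_{\D_1}\le C(\|\cL u\|_1+\|u\|_1)$ for $u\in C_c^\infty(\R^{1+N})$. I would obtain it by testing $-\cL u$ against a smooth approximation of $\operatorname{sgn}(u)$ and passing to the limit: Kato's inequality handles $\Delta u$, the time derivative contributes $\int D_s|u|\,ds\,dx=0$, and integration by parts on the drift gives $\int (\Div_x F)|u|\,ds\,dx$. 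This leads to
$$
(1-\theta)\|Wu\|_1 \le \intrn (V+\Div_x F)|u|\,ds\,dx \le \|\cL u\|_1.
$$
Proposition~\ref{interpol1} is explicitly allowed at $p=1$, which lets me bound $\|W^{1/2}\nabla_x u\|_1$ by a small multiple of $\|(\Delta-D_s)u\|_1$ plus $(\alpha/\eps)\|Wu\|_1$. Using $(\Delta-D_s)u = \cL u -F\cdot\nabla_x u + Vu$ together with (A3) and (A4) and closing by absorption yields the estimate. Lemma~\ref{density} then extends it to $\D_1$ and shows that $L_1$ is closed and dissipative on $\D_1$.

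For the surjectivity of $I-L_1$ I would exploit that \eqref{small} becomes $\theta<1$ when $p=1$, so by continuity it still holds for every $p\in(1,p_0]$ for some $p_0>1$. Given $f\in C_c^\infty(\R^{1+N})$, the function $u^{(p)}:=(I-L_p)^{-1}f\in \D_p$ exists by Theorem~\ref{Lp-thm} and is smooth by local parabolic regularity. A space-time cutoff plus mollification of $u^{(p)}$ allows the Kato computation to be run on $u^{(p)}$ as well, giving $\|u^{(p)}\|_{\D_1}\le C\|f\|_1$ with $C$ independent of $f$. For a general $f\in L^1(\R^{1+N})$, approximating by $(f_n)\subset C_c^\infty(\R^{1+N})$ with $f_n\to f$ in $L^1$, linearity together with the a priori bound shows that the corresponding $u_n=(I-L_p)^{-1}f_n$ form a Cauchy sequence in the complete space $\D_1$; the limit $u\in \D_1$ then solves $u-L_1 u=f$. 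Combining with dissipativity, the Lumer--Phillips theorem produces the contraction semigroup $S(\cdot)$ generated by $L_1$ on $L^1(\R^{1+N})$.

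The evolution-family structure is then obtained as in Step~1 of the proof of Theorem~\ref{Lp-thm}: the identity $L_1(\varphi f)=\varphi L_1 f-\varphi'f$ for $\varphi\in C_c^1(\R)$ combined with \cite[Thm.~3.4]{RRS} produce an evolution family $G(s,r)$ of $L^1$-contractions with $(S(t)f)(s)=G(s,s-t)f(s-t)$. Both assertion (c) and the positivity of $S(\cdot)$ (and hence of $G(s,r)$) follow from uniqueness: for $f\in L^1\cap L^p$, the solution $u^{(p)}$ constructed above lies in $\D_1$ and solves $u-L_1 u=f$, so it coincides with $(I-L_1)^{-1}f$; positivity is then inherited from Theorem~\ref{Lp-thm}(a). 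Part (b) is obtained as in Step~2 of the proof of Theorem~\ref{Lp-thm} by applying $L_1$ to $s\mapsto \phi(s)G(s,r)\varphi$ for a cutoff $\phi\in C_c^\infty(\R)$ with support in $(r,+\infty)$ and $\phi\equiv 1$ on $[a,b]$. I expect the main technical obstacle to be the rigorous execution of the Kato-type argument on the $\D_p$-solutions $u^{(p)}$: one has to design a cutoff and regularization compatible with the integration-by-parts step, producing constants independent of the approximation parameters. The compact embedding $D(\Delta-D_s)\hookrightarrow W^{\rho-\sigma}_1(\R,W^{2\sigma}_1(\R^N))$ recalled before the statement is useful at this stage to identify pointwise limits of the approximants.
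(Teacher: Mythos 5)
Your treatment of the a priori estimate is the same as the paper's: multiply by $\operatorname{sign} u$ (i.e., Kato's argument), integrate by parts, and combine the resulting bound $(1-\theta)\|Wu\|_1\le\|\cL u\|_1$ with Proposition~\ref{interpol1} (which is stated for $p=1$) and the density Lemma~\ref{density} to conclude that $L_1$ is closed and dissipative on $\D_1$. The evolution-family structure and assertions (b), (c) are also obtained just as in the paper, via \cite[Theorem~3.4]{RRS} and the proof of Theorem~\ref{Lp-thm}.

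Where you deviate is the surjectivity of $I-L_1$, and here there is a genuine gap. The paper proves it by setting $F\equiv0$ (the general case then follows by the continuity method), replacing $V$ by its Yosida approximation $V_\eps=V(1+\eps V)^{-1}$ so that $\cL_{0,\eps}=\Delta-V_\eps-D_s$ is a \emph{bounded} perturbation of $\Delta-D_s$ on $L^1$, deriving $\eps$-uniform bounds from the a priori estimate (which applies since $u_\eps\in\D_1^\eps$), and then passing to the limit $\eps\to0$ using the compact embedding $D(\Delta-D_s)\hookrightarrow W^{\rho-\sigma}_1(\cdot,W^{2\sigma}_1(\cdot))$ recalled above the theorem. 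Your alternative — take $p$ slightly bigger than $1$ so that \eqref{small} still holds, set $u^{(p)}=(I-L_p)^{-1}f$ for $f\in C_c^\infty$, and run the Kato argument on $u^{(p)}$ — stumbles precisely at the step you yourself flag as the ``main technical obstacle''. To justify the integration by parts on $u^{(p)}$ you must introduce a space--time cutoff; the boundary terms this produces are integrals of $u^{(p)}$, $\nabla_x u^{(p)}$, $V^{1/2}u^{(p)}$ over large annuli in $(s,x)$, and they must tend to zero as the cutoff is removed. But a priori $u^{(p)}$ lies only in $\D_p\subset L^p$ with $p>1$, and $L^p$-membership does not make the $L^1$-mass on an annulus of radius $R$ go to zero (by H\"older it can be of order $R^{(N+1)/p'}$). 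In other words, the Kato argument establishes the desired $L^1$-bound only for functions already known to be in $\D_1$, and the claim $u^{(p)}\in\D_1$ is precisely what you have to prove. Without an additional mechanism — for instance a domination argument bounding $|u^{(p)}|$ by $(I-(\Delta-D_s)+c_0)^{-1}|f|$ to obtain $L^1$-integrability first, or the paper's Yosida regularisation which keeps the approximants inside $D(\Delta-D_s)\subset L^1$ by construction — the bootstrap does not close. Your closing remark about the compact embedding also does not fit your own scheme (you claim Cauchyness in the complete space $\D_1$, so no compactness is needed); that tool is what the paper uses to extract the limit of its Yosida approximants, not something that rescues the cutoff argument.
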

\begin{proof}
Take $u \in C_c^\infty(\R^{1+N})$  and set $f=-\cL u$.
We multiply this equation by ${\rm sign}\, u$. Integrating by parts
and using the
dissipativity of $\Delta-D_s$  on $L^1(\R^{1+N})$, we then obtain
\begin{align*}
\int_{\R^{1+N}}&(V+\Div_x F)|u|\,ds\,dx\\
  &\le \int_{\R^{1+N}}(D_s-\Delta) u \, {\rm sign} u\, ds\,dx
       +\int_{\R^{1+N}}(Vu-F\cdot\nabla_x u) \,{\rm sign} u\,ds\,dx\\
  & = \int _{\R^{1+N}}f\,{\rm sign}u\,ds\,dx \le \|f\|_1\,.
\end{align*}
Assumptions (A3) and (A5) thus imply
\begin{align}\label{L1-est}
  (1-\theta)\,\|Wu\|_1 \le \|\cL u\|_1.
\end{align}
Taking into account Proposition~\ref{interpol1} and proceeding as in the
proof of Proposition~\ref{prop-dom-p} after estimate
\eqref{full-apriori}, we obtain the inequalities \eqref{apriori-est}
also for $p=1$ with constants only depending on the constants in (A2)--(A5).
Hence, $L_1$ is closed.  Moreover, the dissipativity of $L_1$  follows from
Lemmas~\ref{dissip} and \ref{density}.

We want to show the invertibility of $I-L_1$. Here,
we may assume that $F\equiv 0$ since the general case is then deduced by means
of the continuity
method as in the proof of Theorem~\ref {Lp-thm}.
We use the notation introduced in that proof. Observe that the operator
$\cL_{0,\eps }=\Delta-V_\eps -D_s$ with domain $D(\Delta-D_s)$
generates a contraction semigroup on $L^1(\R^{1+N})$ for each
$\eps\in(0,1]$, thanks to the bounded perturbation theorem applied
to the generator $\Delta -D_s$. As a consequence,
for each $f\in L^1(\R^{1+N})$ there exists a function $u_\eps \in L^1(\R^{1+N})$
such that $u_\eps- \cL_{0,\eps }u_\eps=f$.  The dissipativity
and \eqref{L1-est} now imply
\begin{equation}\label{approx}
\|u_\eps\|_1 \le \|f\|_1 ,\qquad \|W_\eps u_\eps \|_1 \le c\,\|f\|_1,
\end{equation}
with a constant $c$ independent of $\eps$ since $V_\eps$ and $W_\eps$
satisfy the assumptions (A1)-(A5) with uniform constants. It follows that
\[\|(\Delta-D_s) u_\eps\|_1 =\|\cL_{0,\eps }u_\eps + V_\eps u_\eps\|_1
\le (2+c)\|f\|_1.\]
By the observations made above the statement of the theorem,
there exists a null sequence $(\eps_n)$  such that $u_n:=u_{\eps_n}$
 converges to a function $u$ in $L^1_{\loc}(\R^{1+N})$. We infer from
(\ref{approx}) that $\|u\|_1 \le \|f\|_1$ and  $\|Wu\|_1 \le
c\,\|f\|_1$. Moreover, $(\Delta-D_s) u_n \to (\Delta-D_s) u$ in
$L^1_{\loc}(\R^{1+N})$ and, therefore, $\cL u=f$ and $u \in \D_1$.
Thus, $L_1$ generates a contraction semigroup. The other assertions
can now be shown as in Theorem~\ref{Lp-thm}.
\end{proof}

We now come to the space $C_0$. In the proof of the next result we have
to estimate the oscillation of $V$ itself, and thus we cannot work
with the auxiliary potential $W$.

\begin{prop} \label{prop-dom-0}
 Assume that $a=I$ and that conditions {\rm (A2)} and {\rm (A4)} hold for every
 $\beta,\gamma>0$ and with $W=V$.  Then, there exists a constant $C_\infty>0$
(only depending on  the constants in {\rm (A2)} and {\rm (A4)}) such that
\begin{equation*}
C_\infty^{-1}\,\|u\|_{\D_\infty} \le\|\cL u\|_\infty +\|u\|_\infty
  \le C_\infty\, \|u\|_{\D_\infty},
\end{equation*}
for any $u\in \D_\infty$.
\end{prop}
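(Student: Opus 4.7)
The upper bound $\|\cL u\|_\infty + \|u\|_\infty \le C_\infty\|u\|_{\D_\infty}$ is immediate. Writing $\cL u = (\Delta - D_s)u + F\cdot\nabla u - V u$, the bound $|F\cdot\nabla u| \le \kappa V^{1/2}|\nabla u|$ from (A4) and Proposition~\ref{interpol1} (valid for $p=\infty$) together yield $\|\cL u\|_\infty \le C \|u\|_{\D_\infty}$. Since $V \ge c_0$ by (A2), we also have $\|u\|_\infty \le c_0^{-1}\|Vu\|_\infty \le c_0^{-1}\|u\|_{\D_\infty}$.

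For the lower bound, my plan is to localize and rescale to a unit parabolic cylinder in order to apply classical $L^p$-maximal regularity for the shifted heat operator $\Delta - D_t - 1$ together with Sobolev embedding. Fix $(s_0, x_0) \in \R^{1+N}$ and let $V_* := V(s_0, x_0)$. Using that (A2) holds for every $\beta,\gamma>0$, choose these parameters so small that, on the parabolic cylinder $Q$ centered at $(s_0,x_0)$ with time-width $V_*^{-1}$ and spatial radius $V_*^{-1/2}$, the potential oscillates by at most a prescribed small factor; in particular $V \ge V_*/2$ on $Q$. Take a smooth cutoff $\rho$ supported in $Q$, equal to $1$ on a smaller coaxial cylinder $Q'$, set $v = \rho u$, and rescale $\tilde v(t,y) = v(s_0 + V_*^{-1}t, x_0 + V_*^{-1/2}y)$.

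The function $\tilde v$ has compact support in the unit cylinder and satisfies
\[
(\Delta_y - D_t - 1)\tilde v = V_*^{-1}\!\left[\rho\,\cL u - \rho F\cdot\nabla u + \rho(V - V_*)u + u(\Delta - D_s)\rho + 2\nabla\rho\cdot\nabla u\right]
\]
at $(s_0 + V_*^{-1}t, x_0 + V_*^{-1/2}y)$. Standard $L^p$-maximal regularity for the invertible operator $\Delta - D_t - 1$ on $\R^{1+N}$ combined with the Sobolev embedding $W^{1,2}_p(\R^{1+N})\hookrightarrow L^\infty(\R^{1+N})$ for $p>(N+2)/2$ gives $|\tilde v(0,0)| \le C_N\,\|(\Delta_y - D_t - 1)\tilde v\|_p$. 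I then estimate the five source terms: (A4) bounds $|F| \le \sqrt{2}\kappa V_*^{1/2}$ on $Q$; Proposition~\ref{interpol1} bounds $|\nabla u| \le \sqrt{2}V_*^{-1/2}\|V^{1/2}\nabla u\|_\infty$; the potential oscillation contributes at most $\epsilon V_*|u|$; and $|(\Delta-D_s)\rho| \sim V_*$, $|\nabla \rho|\sim V_*^{1/2}$. The crucial point is the pointwise comparison $V_* |u| \le 2|Vu| \le 2\|Vu\|_\infty$ on $Q$, which converts what would otherwise be uncontrollable $V_*\|u\|_\infty$ contributions into multiples of $\|Vu\|_\infty$.

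Putting the pieces together and rescaling back, I obtain a pointwise estimate
\[
V_* |u(s_0,x_0)| \le C\|\cL u\|_\infty + C\|V^{1/2}\nabla u\|_\infty + \eta\|Vu\|_\infty,
\]
with $\eta$ small once $\beta,\gamma$ and the oscillation parameter have been taken small. Taking the supremum over $(s_0,x_0)$, using Proposition~\ref{interpol1} once more to absorb the gradient term back into $\|Vu\|_\infty + \|(\Delta-D_s)u\|_\infty$, and then absorbing the residual $\eta\|Vu\|_\infty$, yields $\|Vu\|_\infty \le C(\|\cL u\|_\infty + \|u\|_\infty)$. The companion bound on $\|(\Delta - D_s) u\|_\infty$ is then immediate from $(\Delta - D_s) u = \cL u - F\cdot\nabla u + Vu$ and Proposition~\ref{interpol1}. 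I expect the main obstacle to be the closure of this absorption: the coefficient of $\|Vu\|_\infty$ on the right is a mixture of an oscillation part (controllable through $\beta,\gamma$) and a fixed part coming from the Sobolev and Calder\'on--Zygmund constants together with the cutoff derivatives; ensuring that the combined coefficient drops strictly below one (possibly after a preliminary shift $V \mapsto V + \lambda$ that permits setting $K_\beta = K'_\gamma = 0$) is where the freedom of choosing $\beta,\gamma$ arbitrarily small is genuinely used.
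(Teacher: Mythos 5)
Your upper-bound argument is correct and matches the paper's. The lower-bound plan, however, has a structural gap at the absorption step, and it is not a loose end but the place where the argument fails. The source term $V_*^{-1}u(\Delta-D_s)\rho$ in your decomposition is comparable to $|u|$, hence at most $cV_*^{-1}\|Vu\|_\infty$ on $Q$, because with your cylinder of time-width $V_*^{-1}$ and spatial radius $V_*^{-1/2}$ the cutoff derivatives satisfy $|(\Delta-D_s)\rho|\sim V_*$ \emph{independently of} $\beta,\gamma$. After the $L^p$-estimate for $\Delta-D_t-1$ and the Sobolev embedding you therefore arrive at $V_*|u(s_0,x_0)|\le C_{\mathrm{Sob}}C_{\mathrm{CZ}}\,c_{\mathrm{cutoff}}\,\|Vu\|_\infty+\cdots$ (here $C_{\mathrm{Sob}}$ and $C_{\mathrm{CZ}}$ are the Sobolev and Calder\'on--Zygmund constants) with a fixed coefficient at least $1$; shrinking $\beta,\gamma$ only shrinks the oscillation term $\rho(V-V_*)u$, not this one. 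Resizing the cylinder to time-width $(\delta V_*)^{-1}$ does not help: then $|(\Delta-D_s)\rho|\sim\delta V_*$, but the $L^p$-estimate after rescaling carries a factor $(\delta V_*)^{-1}$, the $\delta$'s cancel, and the coefficient of $\|Vu\|_\infty$ remains of order $C_{\mathrm{Sob}}C_{\mathrm{CZ}}\ge1$ for every $\delta$. The concern you flag in your last sentence is exactly where the argument breaks, and the parameters $\beta,\gamma$ cannot rescue it.

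The paper's proof differs in precisely the step that removes this obstruction. Instead of $L^p$-maximal regularity plus Sobolev embedding it invokes the $L^\infty$-dissipativity of $\Delta+F\cdot\nabla_x-D_s$ (Lemma~\ref{dissip}(b)): writing $(\Delta+F\cdot\nabla_x-D_s-V(s_0,x_0))(\zeta\eta u)=w$ gives $V(s_0,x_0)\,\|\zeta\eta u\|_\infty\le\|w\|_\infty$ with constant \emph{exactly} $1$, with no rescaling and hence no factor $(\delta V_*)^{-1}$. Moreover, the paper's cylinders have time-width $\sim(\beta V_*)^{-1}$ and spatial radius $\sim(\gamma V_*^{1/2})^{-1}$ --- growing, not fixed, as $\beta,\gamma\to0$ --- so the cutoff and drift-cutoff contributions scale like $c_1(\beta+\gamma^2+\kappa\gamma)\|Vu\|_\infty$ and tend to zero, while the oscillation factor $\tfrac12$ sits strictly below the left-hand coefficient $\tfrac23$; the resulting margin $\tfrac16-c_1(\beta+\gamma^2+\kappa\gamma)>0$ closes the absorption. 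To salvage your route you would need a device with no constant loss in the sup norm, which is exactly what the paper's use of the maximum principle provides.
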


\begin{proof}
The second estimate in the assertion is a consequence of
Proposition~\ref{interpol1}. Lemma~\ref{density} then
shows that it is enough to prove the other inequality for
all test functions $u$. At first we assume that (A2)
holds with $K_\beta=K'_\gamma=0$ for some $\beta,\gamma\in(0,1]$
to be fixed below.

Let $u\in C_c^\infty (\R^{1+N})$. Set $f=\cL u$.  Again we write
$\nabla$ instead of $\nabla_x$.
Fix $(s_0,x_0) \in \R^{1+N}$.
As in the proof of Proposition~\ref{interpol1} (with $\ell_1=1$
and $\ell_2=\ell$),
we define $Q=Q(s_0,x_0)=(s_0-\tau,s_0+\tau)\times B(x_0,r)$ and $Q'=Q'(s_0,x_0)
=(s_0-\frac{\tau}{2},s_0+\frac{\tau}{2})\times B(x_0,\frac{r}{2})$ with
 $\tau= (4\beta V(s_0,x_0))^{-1}$
 and $r=\sqrt{3}(2\ell\gamma V(s_0,x_0)^\frac12)^{-1}$. Here, we fix
$\ell\ge1$ such that
 \[ \frac{3(2\ell -1)^2}{4(2\ell)^2}\ge \frac23 \qquad \text{and} \qquad
    \frac{5(2\ell +1)^2}{4(2\ell)^2}\le \frac32\,. \]
The  inequalities \eqref{osc} (with $W=V$) now imply  that
\begin{align} \label{osc1}
\frac23\, V(s,x)\le V(s_0,x_0) \le \frac32 \, V(s,x) \quad \text{and} \quad
    | V(s,x) -  V(s_0,x_0)| \le \frac12\, V(s,x),
\end{align}
for all $(s,x)\in Q$.  We choose functions $\eta \in C_c^\infty (\R^N)$ and
$\zeta\in C_c^\infty(\R)$ such that $\1_{B(x_0,r/2)}\le\eta\le\1_{B(x_0,r)}$ and
$\1_{(s_0-\tau/2,s_0+\tau/2)}\le\zeta\le\1_{(s_0-\tau, s_0+\tau)}$, $|\nabla \eta| \leq c/r$,
and $|D^2 \eta| \leq c/r^2$ and $|D_s \zeta| \le c/\tau$. Here and below
the constants $c=c(\eta,\zeta)$ do not depend on
$s_0$, $x_0$, $\tau$  and $r$. We have
\begin{align*}
\Delta &(\zeta\eta u)+F\cdot \nabla (\zeta\eta u)-D_s(\zeta\eta u)
   -V(s_0,x_0) \zeta \eta u\\
&=\zeta \eta f+u (\Delta-D_s)(\zeta\eta)+2\zeta \nabla u \cdot \nabla \eta+
   \zeta u F\cdot \nabla \eta +(V-V(s_0,x_0))\zeta \eta u=:w.
\end{align*}
Since $V(s_0,x_0) >0$, the dissipativity of $\Delta+F\cdot\nabla-D_s$ on
 $C^\infty_c(\R^{1+N})$ (see Lemma~\ref{dissip}) yields
\begin{align*}
\|V(s_0,x_0)\zeta \eta u\|_{\infty,Q} \le \|w\|_\infty \le& \|f\|_{\infty,Q}
   +\Big(\frac{c}{r^2} +\frac{c}{\tau}\Big) \|u\|_{\infty,Q}
 +\frac{c}{r}\|\nabla u\|_{\infty,Q} \\
  &+\frac{c\kappa}{r}\|V^{\frac{1}{2}}u\|_{\infty,Q}
        +\|(V-V(s_0,x_0))u\|_{\infty,Q}\,,
\end{align*}
where we have also used (A4) and
have denoted the sup norm  on $Q$ by $\|\cdot\|_{\infty,Q}$. From  \eqref{osc1}
and the definition of $\tau$ and $r$, we then  deduce
\[
\frac{2}{3}\,\|Vu\|_{\infty, Q'}
  \leq \|f\|_\infty  + c_1(\gamma^2+\kappa\gamma+\beta)\,\|Vu\|_{\infty,Q}
  + \gamma c_1\,\|V^{\frac{1}{2}}\nabla u\|_{\infty,Q}
    + \frac{1}{2}\,\|Vu\|_{\infty,Q},
\]
where $c_1$ only depends on $\eta$ and $\zeta$. Letting $(s_0,x_0)$ vary in $\R^{1+N}$,
we obtain
\[ \frac{2}{3}\,\|Vu\|_\infty
\leq \|f\|_\infty  + c_1(\gamma^2+\kappa\gamma+\beta)\,\|Vu\|_{\infty}
   + \gamma c_1\,\|V^{\frac{1}{2}}\nabla u\|_{\infty}
     + \frac{1}{2}\,\|Vu\|_{\infty}\,.  \]
We fix $\beta=\min\{1,(18 c_1)^{-1}\}$ and take
 $\gamma\le\gamma_0$ where $\gamma_0\in(0,1]$ satisfies
$c_1(\gamma^2_0+\kappa\gamma_0)\le 18^{-1}$. It then follows that
\begin{align} \label{v-est}
\|Vu\|_\infty \leq 18\,\|f\|_\infty +18\gamma_0 c_1\|V^{\frac{1}{2}}\nabla u\|_\infty.
\end{align}
Now (A4) and the equation $\cL u=f$ imply
\begin{align} \label{heat-est-0}
\|(\Delta-D_s) u\|_\infty  \le c_2\,(\|f\|_\infty +\|V^{\frac{1}{2}}\nabla u\|_\infty),
\end{align}
for $c_2:=\max\{19,\kappa+18\gamma_0 c_1\}$. Combining Proposition~\ref{interpol1}
with \eqref{v-est} and \eqref{heat-est-0}, we arrive at
\begin{align*}
\|V^{\frac{1}{2}}\nabla u\|_\infty
&\leq \eps \|(\Delta-D_s) u\|_\infty +\frac{\alpha}{\eps}\|Vu\|_\infty\\
& \le c(\eps) \|f\|_\infty+ \Big(\eps c_2 +\frac{18 \alpha \gamma c_1}{\eps}\Big)
      \|V^{\frac{1}{2}}\nabla u\|_\infty\,,
\end{align*}
for all $\varepsilon\in (0,1]$.
Because of Remark~\ref{alpha-rem}, the constant  $\alpha$ is independent of
$\gamma$ varying in bounded sets.
Finally, we set  $\eps =\gamma^{\frac{1}{2}}$ and  $\gamma =
\min\{\gamma_0, (2(c_2+18\alpha c_1))^{-2}\}$. This leads to the estimate
 $\|V^{\frac{1}{2}}\nabla u\|_\infty \leq c\,\|f\|_\infty$
 for a constant only depending on $N$ and $\kappa$.
 Inequalities \eqref{v-est} and
 \eqref{heat-est-0} now yield  $\|u\|_{\D_\infty} \leq c\,\|\cL u\|_\infty $.

It remains to remove the restriction that $K_\beta=K'_\gamma=0$.
Above we have fixed $\beta,\gamma>0$ depending only on
$N$ and $\kappa$. There exists a number $\lambda=\lambda(\beta,\gamma)\ge0$
such that $V+\lambda$ satisfies (A2) for the fixed value
of $\beta$ and $\gamma$ with $K_\beta=K'_\gamma=0$. Hence, the first
estimate in the assertion holds for $V+\lambda$ and all test functions $u$.
It then holds for $V$ itself with a possibly larger constant $C_\infty$.
\end{proof}

As before Theorem~\ref{L1-thm}, one can verify that $(V(t)f)(s) =G(t)f(s-t)$
defines a positive contraction semigroup on $C_0(\R,C_0(\R^N))$ whose
generator is given by $\Delta-D_s$ on its maximal domain.

\begin{thm}
Assume that $a=I$, that $V\in C(\R^{1+N})$ and that {\rm (A2)}--{\rm (A4)}
 are satisfied  for all $\beta,\gamma >0$. Then, the following assertions hold.
\begin{enumerate}[(a)]
\item
The operator $L_\infty=(\cL,\D_\infty)$ generates a positive and
contractive evolution semigroup $S(\cdot)$ on  $C_0(\R^{1+N})$
induced by an evolution family $G(s,r)$,
 $s\ge r$, of positive contractions on $C_0(\R^N)$.
\item
 We set $u:= G(\cdot,r)\varphi$ for every  $\varphi\in  C_0(\R^{N})$
and $r\in \R$. Then, the functions
$(\Delta-D_s)u$ and $Vu$ belong to $C([a,b], C_0(\R^N))$ and
$D_s u(s) = \A(s)u(s)$ for all $s\in (a,b)$ and each interval
$[a,b]\subset  (r,+\infty)$.
Moreover, for each $f\in C_0(\R^{1+N})$ there exists a unique function $u\in \D_\infty$
satisfying \eqref{parab-eq}, namely
\[u(s)=-L_p^{-1}f(s)=\int_{-\infty}^s G(s,r)f(r)\,dr,\qquad s\in\R.\]
\item
If also the assumptions of Theorems~$\ref{Lp-thm}$ or $\ref{L1-thm}$ hold
for some $p\in [1,+\infty)$, then the evolution  semigroup and the evolution family  obtained in the present
theorem and in Theorems $\ref{Lp-thm}$ or $\ref{L1-thm}$, respectively,
coincide on the intersection of the $C_0$- and $L^p$-spaces.
\end{enumerate}
\end{thm}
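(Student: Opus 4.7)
The plan is to follow the architecture of the proofs of Theorems \ref{Lp-thm} and \ref{L1-thm}, using Proposition \ref{prop-dom-0} as the main a priori estimate and Theorem \ref{Lp-thm} itself as the auxiliary device for the construction of solutions (as announced in the introduction).

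First, closedness of $(L_\infty,\D_\infty)$ in $C_0(\R^{1+N})$ is immediate from Proposition \ref{prop-dom-0}; Lemma \ref{density} identifies $C_c^\infty(\R^{1+N})$ as a core, on which Lemma \ref{dissip}(b) yields dissipativity because $V\ge W\ge c_0>0$. By the Lumer--Phillips theorem, the generation statement in (a) reduces to surjectivity of $I-L_\infty:\D_\infty\to C_0(\R^{1+N})$. A continuity argument along the homotopy $\cL_\tau=\Delta+\tau F\cdot\nabla_x-V-D_s$, $\tau\in[0,1]$, reduces the problem to the case $F\equiv 0$: replacing $F$ by $\tau F$ preserves (A2)--(A4) uniformly in $\tau$, so the bound of Proposition \ref{prop-dom-0} is uniform along the homotopy. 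I therefore work with $\cL_0=\Delta-V-D_s$, for which $\theta=0$, $M=1$ and (A5) is trivial, so I may fix $\beta,\gamma>0$ so small that condition \eqref{small} holds for some $p>N+2$; this is allowed since (A2) is assumed for \emph{every} $\beta,\gamma>0$. Theorem \ref{Lp-thm} then produces, for every $f\in L^p(\R^{1+N})$, a unique $u\in\D_p$ with $u-\cL_0u=f$.

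It remains to show that, when $f\in C_c^\infty(\R^{1+N})$ (which is dense in $C_0(\R^{1+N})$), this $u$ already belongs to $\D_\infty$. Interior parabolic regularity for $\Delta-D_s$ together with the global bound $u\in\D_p\hookrightarrow W^{1,2}_p(\R^{1+N})$ yields a local $W^{1,2}_p$-norm of $u$ on each unit parabolic cylinder $Q_1((s_0,x_0))$ that tends to zero as $|(s_0,x_0)|\to\infty$. The parabolic Sobolev embedding $W^{1,2}_p\hookrightarrow C^{0,\alpha}$ (valid for $p>N+2$) then gives $u\in C_0(\R^{1+N})$. A subsequent Schauder bootstrap, using $V\in C^1$ from (A2) so that $g:=u+Vu-f\in C^{0,\alpha}_\loc$, improves $u$ to $C^{2,\alpha}_\loc$ with the same decay at infinity, whence $(\Delta-D_s)u\in C_0(\R^{1+N})$ and then $Vu=u-f-(\Delta-D_s)u\in C_0(\R^{1+N})$, so $u\in\D_\infty$. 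Density of $C_c^\infty$ in $C_0$ together with closedness of $L_\infty$ and the a priori estimate of Proposition \ref{prop-dom-0} finally extends surjectivity to arbitrary $f\in C_0(\R^{1+N})$.

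Once surjectivity is in hand, Lumer--Phillips produces the contraction semigroup $S(\cdot)$ on $C_0(\R^{1+N})$; positivity follows from the Trotter product formula applied to the multiplication operator $A_\infty(\cdot)$ (whose $C_0$-version is produced by \cite{MPRS,AMP}) and the positive translation semigroup $-D_s$, exactly as in Step~3 of the proof of Theorem \ref{Lp-thm}. The evolution-semigroup machinery of \cite{RRS}, Theorem~3.4, then yields the positive contractive evolution family $G(s,r)$ on $C_0(\R^N)$ with $(S(t)f)(s)=G(s,s-t)f(s-t)$. Assertion (b) is proved by the cut-off device used in Step~2 of the proof of Theorem \ref{Lp-thm}, and assertion (c) follows from uniqueness of the resolvent of $\cL$ on the common core $C_c^\infty(\R^{1+N})$ together with the $L^p$--$L^q$ consistency of Theorems \ref{Lp-thm}(c) and \ref{L1-thm}(c). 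The main obstacle in the whole argument is the transfer of the $L^p$ information to $C_0$ and, specifically, the conclusion $Vu\in C_0(\R^{1+N})$: this is delicate because $V$ may be unbounded and need not vanish at infinity, and is precisely what forces the Schauder bootstrap above, relying on the $C^1$-regularity of $V$ built into (A2).
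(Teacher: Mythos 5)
Your overall architecture (closedness and dissipativity from Proposition~\ref{prop-dom-0} and Lemma~\ref{dissip}, reduction to $F\equiv 0$ by a continuity argument, use of the $L^p$ theory with $p>N+2$ to produce a candidate solution, and then upgrading its regularity to $\D_\infty$) matches the paper's, but the key upgrade step has a genuine gap, and a second structural step is missing.

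First, the Schauder bootstrap you propose to conclude $Vu\in C_0(\R^{1+N})$ is circular. The interior Schauder estimate bounds $\|u\|_{C^{2,\alpha}(Q_{1/2}(s_0,x_0))}$ in terms of $\|u\|_{L^\infty(Q_1)}$ and $\|g\|_{C^{0,\alpha}(Q_1)}$ where $g=(\Delta-D_s)u=u+Vu-f$; the decay of $(\Delta-D_s)u$ at infinity, and hence of $Vu$ via the equation, therefore requires the decay of $Vu$ itself. Moreover, the claim ``$V\in C^1$ from (A2)'' is unjustified: (A2) concerns $W$, not $V$, and the theorem only assumes $V\in C(\R^{1+N})$ with $W\le V\le c_1 W$. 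The paper handles this point differently. It first constructs the solution $u$ as a limit of Yosida-approximating solutions $u_\eps\in W^{1,2}_p(\R^{1+N})$ to $u_\eps-(\Delta-V_\eps-D_s)u_\eps=f$, obtaining \emph{simultaneously} the $L^p$ and $L^\infty$ bounds $\|V_\eps u_\eps\|_r+\|(\Delta-D_s)u_\eps\|_r\le c\|f\|_r$ for $r=p,\infty$ from Propositions~\ref{prop-dom-p} and \ref{prop-dom-0}; thus in the limit $Vu\in L^\infty$ and $u\in W^{1,2}_p\hookrightarrow C_0(\R,C^1_0(\R^N))$. The passage from $L^\infty$ to $C_0$ for $Vu$ is then done \emph{not} by Schauder but by applying Proposition~\ref{prop-dom-0} to $\eta u$ for a cut-off $\eta$ supported in a ball of radius $2R$ about $(s_0,x_0)$: this yields
$|V(s_0,x_0)u(s_0,x_0)|\le c\bigl[\|f\|_{L^\infty(B)}+\|u\|_{L^\infty(B)}+R^{-1}(\|\nabla_x u\|_\infty+\|u\|_\infty)\bigr]$,
and the right-hand side can be made small for large $|(s_0,x_0)|$ and $R$. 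This localization only uses continuity of $V$ and avoids the circularity.

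Second, Proposition~\ref{prop-dom-0} is stated under the hypothesis $W=V$, and your proposal leans on it without first reducing to this case. The paper's proof makes this explicit: surjectivity and positivity are established first for $V=W$, and then the general potential $W\le V\le c_1 W$ is recovered by the monotone homotopy $V_\tau=W+\tau(V-W)$, $0\le\tau\le 1$, using the Lie--Trotter product formula, the comparison $0\le e^{t\cL_\tau}\le e^{t\cL_0}$, and the bound $\|W(I-\cL_0)^{-1}\|\le 3C_\infty$ to run finitely many perturbation steps. This second homotopy is distinct from the one in $F$ and is entirely absent from your argument. Finally, note that the paper obtains positivity via a direct maximum principle argument on the resolvent (a negative minimum of $u$ leads to a contradiction), not via the Lie--Trotter route you propose from Step~3 of Theorem~\ref{Lp-thm}; your route may be salvageable but requires the $C_0$-theory of the elliptic multiplication operator, which is not needed in the paper's argument.
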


\begin{proof}
We first show that $L_\infty$ generates a contraction semigroup on
$C_0(\R^{1+N})$ in the case when $V=W$.
Lemmas~\ref{dissip} and \ref{density} and Proposition~\ref{prop-dom-0}
show that $L_\infty$ is closed, densely defined and dissipative.
Moreover, as in the proof of Theorem~\ref{Lp-thm},
we can restrict ourselves to the case $F\equiv 0$ since
Proposition~\ref{prop-dom-0} gives a suitable a priori estimate.
We use the notation introduced in that proof. Replacing $V$ by $V+\lambda$
we can suppose that $K_\beta=K'_\gamma=0$. We fix $p>N+2$
and sufficiently small $\beta,\gamma>0$ such that \eqref{small}
hold for this $p$, $M=1$ and $\theta=\kappa=0$. Observe that
$W^{1,2}_p(\R^{1+N})\hookrightarrow C_0(\R, C^1_0(\R^N))$.
For each $f\in C_c(\R^{1+N})$ and each $\varepsilon\in (0,1]$,
there exists a function $u_\eps \in W^{1,2}_p(\R^{1+N})$  such that
\begin{align}\label{eps-eq}
u_\eps- (\Delta -V_\eps -D_s)u_\eps =f,
\end{align}
since $V_\eps$ is a bounded perturbation of the generator $\Delta -D_s$.
By dissipativity, we have $\|u_\eps\|_r \le \|f\|_r$ for $r=p,+\infty$.
Propositions~\ref{prop-dom-p} and \ref{prop-dom-0} also yield
\[
\|(\Delta-D_s)u_\eps\|_r+\|V_\eps u_\eps \|_r \le c\,\|f\|_r,
\]
for $r=p,+\infty$.
Here and below the constants $c$ do not depend on $\eps$.
Since the sequence $(u_{\eps})$ is bounded in $W^{1,2}_p(\R^{1+N})$, which continuously
embeds in $C^{\alpha}(\R^{1+N})$ for a suitable $\alpha>0$, the
Arzel\`a-Ascoli theorem implies that
$u_{\eps_n}$ converges locally uniformly in $\R^{1+N}$ to a function $u$,
for a suitable null sequence $(\eps_n)$.
Due to \eqref{eps-eq}, also $(\Delta-D_s) u_{\eps_n}$ converges uniformly
 on compact sets so that
$$u-(\Delta-D_s) u +Vu=f \quad \text{ and  }\quad
 \|Vu\|_r+\|(\Delta-D_s) u\|_r \le c\, \|f\|_r,$$
 for $r=p, +\infty$. Therefore, $u\in W^{1,2}_p(\R^{1+N})\hookrightarrow
  C_0(\R, C^1_0(\R^N))$. We next  show that $Vu$ belongs to
$C_0(\R^{1+N})$. Take $(s_0,x_0)\in \R^{1+N}$ and
$\eta \in C^\infty_c (\R^{1+N})$ such that $\1_{B((s_0,x_0),R)}\le\eta\le\1_{B((s_0,x_0),2R)}$,
$\|\nabla \eta\|_{\infty} \le c/R$ and $\|D^2\eta\|_{\infty} \le c/R^2$ for all $R\ge1$. Then
$$
\eta u -(\Delta-D_s) (\eta u)+V\eta u=\eta f
     -2 \nabla_x \eta \cdot \nabla_x u -u \Delta \eta +uD_s\eta
$$
and Proposition~\ref{prop-dom-0} (applied to $\eta u\in \D_\infty$) shows that
\begin{align*}
|V(s_0,x_0)&u(s_0,x_0)|\le \|V\eta u\|_\infty\\
& \le c \Bigl [\|\eta f\|_\infty
    +\|\eta u\|_\infty +\frac{1}{R}\|\nabla_x u\|_\infty+
     \frac{1}{R}\|u\|_\infty \Bigr ]\\
&\le c \Bigl [\|f\|_{L^{\infty}(B((s_0,x_0),R))}
    +\|u\|_{L^{\infty}(B((s_0,x_0),R))} +\frac{1}{R}\|\nabla_x u\|_\infty+
     \frac{1}{R}\|u\|_\infty \Bigr ].
\end{align*}
Fix $\varepsilon>0$ and let $R$ be sufficiently large such that $R^{-1}(\|u\|_{\infty}+\|\nabla_xu\|_{\infty})\le\varepsilon$.
Further, let $M>R$ be so large such that $|f(s,x)|+|u(s,x)|\le\varepsilon$ for any $|(s,x)|\ge M$ (this is possible
since $u,f\in C_0(\R^{1+N})$). The above inequality implies that
$|Vu(s_0,x_0)u(s_0,x_0)|\le 2c\varepsilon$ if $|(s_0,x_0)|\ge M+R$, so that
$Vu \in C_0(\R^{1+N})$.
We conclude that
 $(\Delta-D_s) u=u+Vu-f\in C_0(\R^{1+N})$ and $u\in \D_\infty$. As a
consequence, $I-L_\infty$ has dense range and thus $L_\infty$
(also with $F\neq0$) generates a
contraction semigroup, provided that $V=W$. Given $0\le f\in  C_0(\R^{1+N})$
and $\lambda>0$, there is a function
$u\in \D_\infty$ with $u-L_\infty u=f$. If $u$ were not non-negative,
it would have a strictly negative minimum. This fact would
easily lead to a contradiction. Hence, $L_\infty$ has a positive resolvent
and generates a positive semigroup.

Now, let $V$ be as in the statement. For $0\le \tau\le 1$, we introduce the
potential $V_\tau =W+\tau(V-W)$ and the operator
 $\cL_\tau=\Delta +F \cdot \nabla -V_\tau-D_s$
with $D(\cL_\tau)= \D_\infty$. Observe that $V_0=W
\le V_\tau\le V_1=V$. We know that $\cL_0$ generates a positive contraction
semigroup on $C_0(\R^{1+N})$. Whenever also
$\cL_\tau$ generates a contraction  semigroup $(e^{t\cL_\tau})_{t\ge0}$,
we can apply the Lie-Trotter formula to the sum $\cL_\tau=\cL_0+ W-V_\tau$
to derive that $0\le e^{t\cL_\tau}\le e^{t\cL_0}$ for
all $t\ge0$, see \cite[Corollary~III.5.8]{EN}. Since $(I-\cL_{\tau})^{-1}$ and
$(I-\cL_0)^{-1}$ are, respectively, the Laplace transform of $e^{t\cL_\tau}$ and $e^{t\cL_0}$ at $\lambda=1$, we  obtain
 $0 \le (I-\cL_\tau)^{-1} \le(I-\cL_0)^{-1}$ and, using also (A3),
$$0\le (V_\sigma-V_\tau )(I-\cL_\tau)^{-1}
\le (c_1-1)(\sigma-\tau)W(I-\cL_0)^{-1}$$
for all $0\le \tau\le1$ such that $\cL_\tau$
generates a positive contraction semigroup and for all $\sigma\in [\tau,1]$.
On the other hand, Proposition~\ref{prop-dom-0} implies that
$\|W(I-\cL_0)^{-1}\| \le 3C_\infty$. By finitely many perturbation steps
of the form $\cL_\sigma=\cL_\tau +V_\tau-V_\sigma$,
we can then  conclude that $\cL_1=L_\infty$ generates a
positive contraction semigroup
on $C_0(\R^{1+N})$.
The remaining assertions can be shown as in Theorem~\ref{Lp-thm}.
\end{proof}

\appendix
\section{A variant of the Besicovitch covering theorem}

In this appendix, we prove a variant of the classical Besicovitch
covering theorems, in which balls are replaced by cylinders. This
proposition plays a crucial role in the proof of Proposition
\ref{interpol1}.

Let us introduce the distance $d$ on $\R^{1+N}$ defined by
\begin{equation}
d((t,x),(s,y))=\max\{|t-s|^{1/2},|x-y|\},\qquad\;\,(t,x),\,(s,y)\in\R^{1+N}.
\label{metric}
\end{equation}

A straightforward computation shows that $d$ is in fact a metric
which defines the same topology in $\R^{1+N}$ as
 the Euclidean norm $|\,\cdot\,|$. Moreover,
$(\R^{1+N},d)$ and $(\R^{1+N},|\cdot|)$ have the same bounded
sets. For all $(s_0,x_0)\in\R^{1+N}$ and  $r>0$, we denote by
$B_d((s_0,x_0),r)$ the ball with center at $(s_0,x_0)$ and radius
$r$ in the metric $d$. Note that
\begin{equation}
B_d((s_0,x_0),r)=(s_0-r^2,s_0+r^2)\times B(x_0,r).
\label{cylinder}
\end{equation}

We can now state and prove the following proposition.

\begin{prop}
\label{prop-besicovitch} Let $\varrho:\R^{1+N}\to (0,+\infty)$ be a bounded
Lipschitz continuous function (with respect to the distance $d$)
with Lipschitz constant $\kappa<1$, i.e.,
\begin{eqnarray*}
|\varrho(s,x)-\varrho(r,y)|\le \kappa
d((s,x),(r,y)),\qquad\;\,(s,x),\,(r,y)\in\R^{1+N}.
\end{eqnarray*}
 Then, there exists a sequence
$((s_n,x_n))\subset\R^{1+N}$ such that the family ${\mathscr
F}=\{B_d((s_n,x_n);\varrho(s_n,x_n)): n\in\N\}$ is a covering of
$\R^{1+N}$. Moreover, for each $\lambda\in [1,\kappa^{-1})$ there exists
a number $\zeta(\kappa,\lambda,N)$ such that every subset $I\subset\N$
with $\bigcap_{n\in I}B_d((s_n,x_n),\lambda\varrho(s_n,x_n))\neq\varnothing$
contains at most $\zeta(\kappa,\lambda,N)$ elements.
\end{prop}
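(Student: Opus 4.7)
\smallskip

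\noindent\textbf{Plan.} I would prove the proposition in two steps: first construct the covering via a maximal disjoint family of shrunken cylinders, then bound the multiplicity by a volume--packing argument, using that the $d$-balls are parabolic cylinders of Lebesgue measure $\asymp r^{N+2}$.

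\smallskip

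\noindent\emph{Step 1 (Covering).} Fix a constant $c\in(0,1/(2+\kappa))$ (e.g.\ $c=1/3$, since $\kappa<1$). Since $(\R^{1+N},d)$ is separable, any pairwise disjoint family of open $d$-balls is countable. By Zorn's lemma, choose a maximal countable family $\{B_d((s_n,x_n),c\varrho(s_n,x_n)):n\in\N\}$ of pairwise disjoint such balls. I claim $\{B_d((s_n,x_n),\varrho(s_n,x_n))\}$ covers $\R^{1+N}$. For any $(t,y)$, by maximality $B_d((t,y),c\varrho(t,y))$ meets some $B_d((s_n,x_n),c\varrho(s_n,x_n))$, so $d((t,y),(s_n,x_n))<c(\varrho(t,y)+\varrho(s_n,x_n))$. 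The Lipschitz hypothesis then gives $\varrho(t,y)<\frac{1+\kappa c}{1-\kappa c}\varrho(s_n,x_n)$, and a short computation using $c(2+\kappa)<1$ yields $d((t,y),(s_n,x_n))<\varrho(s_n,x_n)$.

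\smallskip

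\noindent\emph{Step 2 (Multiplicity).} Let $\lambda\in[1,\kappa^{-1})$ and let $I\subset\N$ be such that there exists $(s^\ast,x^\ast)\in\bigcap_{n\in I}B_d((s_n,x_n),\lambda\varrho(s_n,x_n))$. For each $n\in I$, the Lipschitz estimate gives $|\varrho(s^\ast,x^\ast)-\varrho(s_n,x_n)|\le\kappa\lambda\varrho(s_n,x_n)$, so, writing $R:=\varrho(s^\ast,x^\ast)$,
\begin{equation*}
\frac{R}{1+\kappa\lambda}\le\varrho(s_n,x_n)\le\frac{R}{1-\kappa\lambda},
\qquad d((s_n,x_n),(s^\ast,x^\ast))<\lambda\varrho(s_n,x_n)\le\frac{\lambda R}{1-\kappa\lambda}.
\end{equation*}
Therefore all the disjoint balls $B_d((s_n,x_n),c\varrho(s_n,x_n))$ for $n\in I$ are contained in the single cylinder $B_d\bigl((s^\ast,x^\ast),(\lambda+c)R/(1-\kappa\lambda)\bigr)$. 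Using the identity $|B_d((s_0,x_0),r)|=2\omega_N r^{N+2}$ (with $\omega_N$ the volume of the unit ball in $\R^N$), disjointness and the lower bound $\varrho(s_n,x_n)\ge R/(1+\kappa\lambda)$ yield
\begin{equation*}
|I|\cdot 2\omega_N c^{N+2}\frac{R^{N+2}}{(1+\kappa\lambda)^{N+2}}
\le 2\omega_N\Bigl(\frac{(\lambda+c)R}{1-\kappa\lambda}\Bigr)^{\!N+2},
\end{equation*}
so $|I|\le\left(\frac{\lambda+c}{c}\cdot\frac{1+\kappa\lambda}{1-\kappa\lambda}\right)^{N+2}=:\zeta(\kappa,\lambda,N)$.

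\smallskip

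\noindent\emph{Main obstacle.} No single step is technically hard; the care is in balancing constants. The constraint $\kappa<1$ is used twice and crucially: first to choose $c\in(0,1/(2+\kappa))$ so the maximal disjoint $c\varrho$-family yields a covering by $\varrho$-balls; second, via $\lambda<1/\kappa$, to keep $1-\kappa\lambda>0$ so the Lipschitz comparison of $\varrho(s_n,x_n)$ with $\varrho(s^\ast,x^\ast)$ remains two--sided and the packing estimate closes. The essential geometric fact is that $d$-balls are parabolic cylinders of Lebesgue volume $2\omega_N r^{N+2}$, which makes a volume--packing argument work exactly as in the classical Euclidean Besicovitch covering theorem.
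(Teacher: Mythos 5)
Your proof is correct, and it takes a genuinely different and shorter route than the paper's. The paper adapts the classical Evans--Gariepy construction: it decomposes $\R^{1+N}$ into parabolic annuli $A^{(l)}$ (of width $\omega>2\kappa^{-1}\delta$, where $\delta=\sup\varrho$), performs a greedy selection in each annulus by repeatedly picking points with $\varrho$ at least $3/4$ of the remaining supremum, shows the $\tfrac13$-scaled balls are pairwise disjoint and that the selected radii tend to $0$ within each annulus, obtains a per-annulus multiplicity bound $\xi(\kappa,\lambda,N)$ by volume packing, and then combines annuli through an explicit coloring map $\sigma^{(l)}_\lambda$ together with the observation that balls from annuli two apart cannot meet, yielding $\zeta=2\xi+2$. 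You replace all of this with a single global Vitali-type maximal disjoint family of $c\varrho$-balls: maximality plus the Lipschitz control of $\varrho$ (and $c(2+\kappa)<1$) gives the covering directly, and the multiplicity bound follows in one stroke by packing the disjoint $c\varrho$-balls into one enlarged cylinder around the common point $(s^\ast,x^\ast)$, again using $\kappa\lambda<1$ for the two-sided radius comparison. The annular decomposition in the paper exists solely to make the greedy radii-$\to 0$ argument work on an unbounded domain; your maximal-family device sidesteps that entirely. Both arguments hinge on the same geometric fact $|B_d(\cdot,r)|=2\omega_N r^{N+2}$, and your resulting $\zeta(\kappa,\lambda,N)=\bigl(\tfrac{\lambda+c}{c}\cdot\tfrac{1+\kappa\lambda}{1-\kappa\lambda}\bigr)^{N+2}$ is a perfectly admissible (if different) constant, since the statement only asks for some finite bound depending on $\kappa,\lambda,N$.
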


\begin{proof}
We adapt partly the proof of the classical Besicovitch covering
theorem given in \cite[Section 1.5.2, Theorem 2]{evans-gariepy} to our
situation. Being rather long, we split the proof into several steps.

{\em Step 1.} Let us set
\begin{eqnarray*}
\delta=\sup\{\varrho(s,x):~(s,x)\in\R^{1+N}\},
\end{eqnarray*}
and define the sets
\begin{align*}
A^{(l)}&=\{(s,x)\in\R^{1+N}:~\omega (l-1)\le d((s,x),(0,0))\le \omega l\}\\
\delta^{(l)}_1&=\max\{\varrho(s,x): (s,x)\in
A^{(l)}\},\quad\;\,l\in\N,
\end{align*}
where $\omega$ is a positive constant greater than $2\kappa^{-1}\delta$.
For each $l\in\N$, we are going to construct a countable family of
balls ${\mathcal
F}^{(l)}=\{B_d((s^{(l)}_n,x^{(l)}_n),\varrho(s^{(l)}_n,x^{(l)}_n)):
n\in\N\}$, which, as we will show in the forthcoming steps, will
represent a countable covering of the set $A^{(l)}$. The family
${\mathscr F}$ we are looking for will be then defined as the union
of all the balls from the families ${\mathscr F}^{(l)}$ ($l\in\N$).

We set $A^{(l)}_1:=A^{(l)}$.
Let us fix $l\in\N$ and an arbitrary point
$(s_1^{(l)},x_1^{(l)})\in A^{(l)}_1$ such that
$\varrho(s_1^{(l)},x_1^{(l)})\ge \frac{3}{4}\delta^{(l)}_1$. Next,
we consider the set $A_2^{(l)}:=A^{(l)}_1\setminus
B_d((s_1^{(l)},x_1^{(l)}),\varrho(s_1^{(l)},x_1^{(l)}))$, set
$\delta_2^{(l)}:=\max\{\varrho(s,x): (s,x)\in A_2^{(l)}\}$, and we
pick up an arbitrary point $(s_2^{(l)},x_2^{(l)})\in A_2^{(l)}$ such
that $\varrho(s_2^{(l)},x_2^{(l)})\ge \frac{3}{4}\delta_2^{(l)}$. We
then inductively define the sequence $(s_n^{(l)},x_n^{(l)})$ in this
way: $(s_m^{(l)},x_m^{(l)})$ is any arbitrary fixed point in
$A_m^{(l)}:=A_{m-1}^{(l)}\setminus
B_d((s^{(l)}_{m-1},x^{(l)}_{m-1}),\varrho(s^{(l)}_{m-1},x^{(l)}_{m-1}))$
such that $\varrho(s_m^{(l)},x_m^{(l)})\ge 3\delta_m^{(l)}/4$, where
$\delta_m^{(l)}=\max\{\varrho(s,x): (s,x)\in A_m^{(l)}\}$.

We have two possibilities: either there exists $m\in \N$ such that
$A_{m+1}^{(l)}=\varnothing$ or $A_n\neq\varnothing$ for all
$n\in\N$. In the first case, we set $I^{(l)}=\{1,\ldots,m_0^{(l)}\}$,
where $m_0^{(l)}$ is the smallest integer such that
$A^{(l)}_{m_0^{(l)}+1}=\varnothing$. In the second case, we set
$I^{(l)}=\N$.

Let $\lambda>0$. In the sequel, to simplify the notation, we set
\begin{equation}
B_{i,\lambda}^{(1)}:=B_d((s_i^{(l)},x_i^{(l)}),\lambda\varrho(s_i^{(l)},x_i^{(l)})),
\quad\;\,B^{(l)}_i:=B^{(l)}_{i,1},\quad\;\,
\varrho^{(l)}_i:=\varrho(s_i^{(l)},x_i^{(l)}). \label{notation}
\end{equation}

{\em Step 2.} Here, for every $l\in\N$, we prove that the balls
$B_{i,1/3}^{(l)}$ ($i\in I^{(l)}$) are all disjoint. For this
purpose we first observe that $\varrho^{(l)}_i\ge
\frac{3}{4}\varrho^{(l)}_j$ if $j>i$ . Indeed,
\begin{align*}
\varrho_i^{(l)}\ge \frac{3}{4}\max\{\varrho(s,x): (s,x)\in
A_i^{(l)}\}\ge\frac{3}{4}\max\{\varrho(s,x): (s,x)\in
A_j^{(l)}\}\ge\frac{3}{4}\varrho^{(l)}_j,
\end{align*}
since $A_i\supset A_j$.

Using this inequality, we can now prove that the balls
$B_{i,1/3}^{(l)}$ ($i\in I^{(l)}$) are all disjoint. Assume, by
contradiction, that there exists $(s,y)\in B^{(l)}_{i,1/3}\cap
B^{(l)}_{j,1/3}$ for some indexes $i$ and $j$. Then, the triangle
inequality yields
\begin{align*}
d((s_i^{(l)},x_i^{(l)}),(s_j^{(l)},x_j^{(l)}))&\le
d((s_i^{(l)},x_i^{(l)}),(s,y)) +d((s,y),(s_j^{(l)},x_j^{(l)}))\notag\\
& \le \frac{1}{3}\varrho_i^{(l)}+\frac{1}{3}\varrho_j^{(l)}\le
\frac{1}{3}\varrho_i^{(l)}+\frac{4}{9}\varrho_i^{(l)}
=\frac{7}{9}\varrho_i^{(l)}.
\end{align*}
As a result, $(s_j^{(l)},x_j^{(l)})\in B_i^{(l)}$. This is impossible
since, by construction, the point
$(s_j^{(l)},x_j^{(l)})$ belongs to the set $A_j^{(l)}$ which is
contained in the complement of  $B_i^{(l)}$.

{\em Step 3.} Here, we show for the case $I^{(l)}=\N$ that the sequence
$(\varrho_{n}^{(l)})$ tends to 0 as $n\to +\infty$. As we have already
noticed, $(s_m^{(l)},x_m^{(l)})\notin B_n^{(l)}$ if $m>n$. Hence,
\begin{equation}
d((s_m^{(l)},x_m^{(l)}),(s_n^{(l)},x_n^{(l)}))\ge \varrho^{(l)}_n
=\frac{1}{3}\varrho^{(l)}_n+\frac{2}{3}\varrho^{(l)}_n\ge
\frac{1}{3}\varrho^{(l)}_n+\frac{1}{2}\varrho^{(l)}_m \ge
\frac{1}{3} \left (\varrho^{(l)}_n+\varrho^{(l)}_m\right ).
\label{evans-1}
\end{equation}
Since $(s_n^{(l)},x_n^{(l)})\in A^{(l)}$ for any $n\in\N$, the
sequence $((s_n^{(l)},x_n^{(l)}))$ is bounded with respect to the
distance $d$ and, by the remarks at the very beginning of the
section, it is bounded with respect to the Euclidean norm as well.
Thus, there exists a subsequence $(t_{n_k}^{(l)},x_{n_k}^{(l)})$ which
converges with respect to the Euclidean norm (and, hence, with
respect to the distance $d$) to a point $(s,x)\in A_1^{(l)}$. From
\eqref{evans-1}, it follows that the sequence
$(\varrho^{(l)}_{n_k})$ tends to $0$ as $k\to +\infty$. The same
arguments can then be used to prove that any subsequence of
$(\varrho^{(l)}_n)$ has a subsequence which converges to $0$.
Hence, $\varrho^{(l)}_n$ tends to $0$ as $n\to +\infty$, as well.

{\em Step 4.} We can now prove that, for each $l\in\N$, the family
${\mathscr F}^{(l)}$ is a covering of the set $A^{(l)}$. Of course,
we have only to consider the case when $I^{(l)}=\N$. So, let us fix
a point $(s^*,x^*)\in A^{(l)}$. Since, by Step 3, the sequence
$(\varrho_n^{(l)})$ vanishes as $n\to +\infty$, we can fix $n_0\ge
2$ such that $\varrho_{n_0}^{(l)}<3\varrho(s^*,x^*)/4$. This implies
that $(s^*,x^*)\in B_j^{(l)}$ for some $j\le n_0-1$. Indeed, if this
were not the case, then $(s^*,x^*)\in A^{(l)}_{n_0}$ and, hence,
\begin{eqnarray*}
\varrho_{n_0}^{(l)}\ge \frac{3}{4}\max\{\varrho(s,x): (s,x)\in
A_{n_0}\}\ge \frac{3}{4}\varrho(s^*,x^*),
\end{eqnarray*}
a contradiction.

{\em Step 5.} Here, we prove that, for every $l\in\N$ and every $\lambda\in
[1,\kappa^{-1})$, there exists $\xi(\kappa,\lambda,N)$ such that any
ball of the family ${\mathscr F}^{(l)}_{\lambda}:=\{B_{i,\lambda}^{(l)}:
i\in I^{(l)}\}$ intersects at most $\xi(\kappa,\lambda,N)$ other balls
of the family. Here, $B_{i,\lambda}^{(l)}$ is defined by
\eqref{notation}. As a byproduct, we then deduce that, if $J\subset
I^{(l)}$ is a finite set of indexes such that $\bigcap_{i\in
J}B_{i,\lambda}^{(l)}\neq\varnothing$, then $J$ contains at most
$\xi(\kappa,\lambda,N)$ elements.

Let us fix a ball $B_{i_0,\lambda}^{(l)}$ and let $J$ be a finite set
of indexes such that $B_{i,\lambda}^{(l)}\cap
B_{i_0,\lambda}^{(l)}\neq\varnothing$ for every $i\in J$. Clearly,
\begin{align*}
d((s_{i_0}^{(l)},x_{i_0}^{(l)}),(s_i^{(l)},x_i^{(l)})) \le \lambda\left
(\varrho_{i_0}^{(l)}+ \varrho_i^{(l)}\right ).
\end{align*}
Since, by assumptions the function $\varrho$ is $\kappa$-Lipschitz
continuous, we have
\begin{eqnarray*}
|\varrho_{i_0}^{(l)}-\varrho_i^{(l)}| \le \kappa
d((s_{i_0}^{(l)},x_{i_0}^{(l)}),(s_i^{(l)},x_i^{(l)})).
\end{eqnarray*}
Hence,
\begin{eqnarray*}
|\varrho_{i_0}^{(l)}-\varrho_i^{(l)}| \le \kappa\lambda\left
(\varrho_{i_0}^{(l)}+ \varrho_i^{(l)}\right )
\end{eqnarray*}
or, equivalently,
\begin{equation}
\varrho_{i_0}^{(l)}\le\frac{\kappa\lambda+1}{1-\kappa\lambda}\varrho_i^{(l)},\qquad
\varrho_i^{(l)}\le\frac{\kappa\lambda+1}{1-\kappa\lambda}\varrho_{i_0}^{(l)}.
\label{cover-1}
\end{equation}
We now observe that for all $i\in J$ and $(s,x)\in
B_{i,1/3}^{(l)}$ it holds that
\begin{align*}
d((s,x),(s^{(l)}_{i_0},x^{(l)}_{i_0}))&\le
d((s,x),(s^{(l)}_i,x^{(l)}_i))+
d((s^{(l)}_i,x^{(l)}_i),(s^{(l)}_{i_0},x^{(l)}_{i_0}))\\
& \le \frac{1}{3}\varrho^{(l)}_i +\lambda\left (\varrho_{i_0}^{(l)} +
\varrho_i^{(l)}\right ) =\left (\frac{1}{3}+\lambda\right
)\varrho^{(l)}_i
+\lambda\varrho_{i_0}^{(l)}\\
& \le \left \{\left (\frac{1}{3}+\lambda\right )
\frac{\kappa\lambda+1}{1-\kappa\lambda}+\lambda\right\}\varrho_{i_0}^{(l)}=
\frac{\kappa\lambda+6\lambda+1}{3-3\kappa\lambda}\varrho_{i_0}^{(l)}.
\end{align*}
Therefore, $B^{(l)}_{i,1/3}\subset B^{(l)}_{i_0,\sigma_\kappa}$ for
every $i\in J$, where
$\sigma_{\kappa}:=(\kappa\lambda+6\lambda+1)/(3-3\kappa\lambda)$. Now,
recalling that the balls $B^{(l)}_{i,1/3}$ ($i\in I^{(l)}$) are all
disjoint, it follows that
\begin{align}
3^{-N-2}2\omega_N\sum_{i\in J}(\varrho_i^{(l)})^{N+2}&=m\left
(\bigcup_{i\in J}B^{(l)}_{i,1/3}\right )\notag\\
&\le m(B^{(l)}_{i_0,\sigma_\kappa})=2\omega_N\left
(\frac{\kappa\lambda+6\lambda+1}{3-3\kappa\lambda}\right
)^{N+2}(\varrho_{i_0}^{(l)})^{N+2}, \label{cover-2}
\end{align}
where $m$ and $\omega_N$ denote, respectively, the Lebesgue measure
in $\R^N$ and the Lebesgue measure of the unit ball in $\R^N$. Using
\eqref{cover-1} we can estimate
\begin{equation}
\sum_{i\in J}(\varrho_i^{(l)})^{N+2} \ge {\rm card}(J)\left
(\frac{1-\kappa\lambda}{\kappa\lambda+1}\right
)^{N+2}(\varrho_{i_0}^{(l)})^{N+2}. \label{cover-3}
\end{equation}
 From \eqref{cover-2} and \eqref{cover-3} we now get
\begin{eqnarray*}
3^{-N-2}{\rm card}(J)\left
(\frac{1-\kappa\lambda}{\kappa\lambda+1}\right
)^{N+2}(\varrho_{i_0}^{(l)})^{N+2}\le \left
(\frac{\kappa\lambda+6\lambda+1}{3-3\kappa\lambda}\right
)^{N+2}(\varrho_{i_0}^{(l)})^{N+2},
\end{eqnarray*}
i.e.,
\begin{eqnarray*}
{\rm card}(J)\le \xi(\kappa,\lambda,N):=\left [\left
(\frac{\kappa^2\lambda^2+2\kappa\lambda(1+3\lambda)+6\lambda+1}{(1-\kappa\lambda)^2}\right
)^{N+2}\right ],
\end{eqnarray*}
where $[\,\cdot\,]$ denotes the integer part of the quantity in
brackets.


{\em Step 6.} We now prove that, for every $l\in\N$ and every
$\lambda\in [1,\kappa^{-1})$, the intersection of more than
$\zeta(\kappa,\lambda,N):=2\xi(\lambda,\kappa,N)+2$ balls from the family
${\mathscr F}_{\lambda}:=\{B^{(l)}_{i,\lambda}: l\in\N,\, i\in I^{(l)}\}$
is empty. For this purpose, we reorder each family ${\mathscr
F}^{(l)}_{\lambda}:=\{B^{(l)}_{i,\lambda}: i\in I^{(l)}\}$ ($l\in\N$) into
the union of $\xi(\kappa,\lambda,N)+1$ subfamilies of disjoint balls.
Let us fix $l\in\N$ and define the function
$\sigma^{(l)}_{\lambda}:\N\to\{1,\ldots,\xi(\kappa,\lambda,N)+1\}$
inductively as follows. For $j=1,\ldots,\xi(\kappa,\lambda,N)+1$, we
set $\sigma^{(l)}_{\lambda}(j)=j$. Take an integer $m>  \xi(\kappa,\lambda,N)+1$.
Suppose $\sigma^{(l)}_{\lambda}(j)$ is
defined for every $j\in\{1,\ldots,m\}$. Let us define
$\sigma^{(l)}_{\lambda}(m+1)$. For this purpose, we introduce the set
${\mathcal H}^{(l)}_{\lambda,m}=\{j=1,\ldots,m: B_{j,\lambda}^{(l)}\cap
B_{m+1,\lambda}^{(l)}\neq\varnothing\}$. By Step 5, ${\mathcal
H}^{(l)}_{\lambda,m}$ has less than $\xi(\kappa,\lambda,N)+1$ elements.
Hence, there exists the minimal $h_m\in \{1,\ldots,\xi(\kappa,\lambda,N)+1\}$  such that $h_m\notin \sigma^{(l)}_{\lambda}({\mathcal H}^{(l)}_{\lambda,m})$.
Then we have $B^{(l)}_{r,\lambda}\cap B^{(l)}_{m+1,\lambda}=\varnothing$
for all  $r\in\{1,\dots,m\}$ satisfying $\sigma^{(l)}_{\lambda}(r)=h_m$.
We define $\sigma^{(l)}_{\lambda}(m+1):=h_m$.

Let us now set ${\mathscr G}_{h,\lambda}^{(l)}:=\{B_{i,\lambda}^{(l)}:
\sigma^{(l)}_{\lambda}(i)=h\}$ for each $h\in \{1,\ldots,\xi(\kappa,\lambda,N)+1\}$.
 From the very
definition of the function $\sigma^{(l)}_{\lambda}$, the set ${\mathscr
G}_{h,\lambda}^{(l)}$ consists of disjoint balls. Clearly, each ball of
the family ${\mathscr F}^{(l)}_{\lambda}$ belongs to ${\mathscr
G}^{(l)}_{h,\lambda}$ for a (unique) $h\in\N$. So we have split the
family ${\mathscr F}_{\lambda}^{(l)}$ into the union of the families
${\mathscr G}^{(l)}_{j,\lambda}$ ($j=1,\ldots,\xi(\kappa,\lambda,N)+1$).

We now introduce the sets ${\mathscr G}_{j,\lambda}$
($j=1,\ldots,\zeta(\kappa,\lambda,N)$) defined as follows:
\begin{align*}
&{\mathscr G}_{j,\lambda}=\bigcup_{l=1}^{+\infty}{\mathscr
G}^{(2l-1)}_{j,\lambda},
&j=1,\ldots,\xi(\kappa,\lambda,N)+1,\\
&{\mathscr G}_{j,\lambda}=\bigcup_{l=1}^{+\infty}{\mathscr
G}^{(2l)}_{j-\xi(\kappa,\lambda,N),\lambda},&
j=\xi(\kappa,\lambda,N)+2,\ldots \zeta(\kappa,\lambda,N).
\end{align*}
Note that every family ${\mathscr G}_{j,\lambda}$ consists of disjoint
balls. Indeed, suppose that $B_1$ and $B_2$ belong to ${\mathscr
G}_{j,\lambda}$ for some $j$ and $B_1\cap B_2\neq\varnothing$. (We
assume that $j\le \xi(\kappa,\lambda,N)+1$ but the same argument can be
applied in the case when $j>\xi(\kappa,\lambda,N)+1$.) Then, $B_1\in
{\mathscr G}^{(2l_1-1)}_{j,\lambda}$ and $B_2\in {\mathscr
G}^{(2l_2-1)}_{j,\lambda}$ for some $l_1,l_2\in\N$. Clearly, from the
above results $l_1\neq l_2$ and, without loss of generality, we can
assume that $l_1<l_2$. Denote by $(s_1,x_1)$ and $(s_2,x_2)$ the
centers of the balls $B_1$ and $B_2$, respectively. Since $B_1\cap
B_2\neq\varnothing$, we have
\begin{eqnarray*}
d((s_1,x_1),(s_2,x_2))\le \lambda\left (\varrho(s_1,x_1)+\varrho(s_2,x_2)\right )\le
\lambda\left (\delta+\delta\right )=2\lambda\delta.
\end{eqnarray*}
On the other hand, $(s_1,x_1)\in A^{(2l_1-1)}$ and $(s_2,x_2)\in
A^{(2l_2-1)}$. Hence,
\begin{align*}
d((s_1,x_1),(s_2,x_2))&\ge d((s_2,x_2),(0,0))-d((s_1,x_1),(0,0))\\
& \ge \omega(2l_2-2)-\omega(2l_1-1)\\
& =\omega(2(l_2-l_1)-1) \ge \omega,
\end{align*}
which leads us to a contradiction, since $\omega>2\kappa^{-1}\delta$. It is now clear that
\begin{eqnarray*}
\sum_{j=1}^{\zeta(\kappa,\lambda,N)}\sum_{B_{i,\lambda}^{(l)}\in
{\mathscr G}_{j,\lambda}}\chi_{B_{i,\lambda}^{(l)}}(s,x)\le
\zeta(\kappa,\lambda,N),\qquad\;\,(s,x)\in\R^{1+N},
\end{eqnarray*}
and this completes the proof.
\end{proof}

\end{document}